\numberwithin{equation}{section}
\DeclareFontFamily{OMX}{MnSymbolE}{}
\DeclareSymbolFont{MnLargeSymbols}{OMX}{MnSymbolE}{m}{n}
\DeclareFontShape{OMX}{MnSymbolE}{m}{n}{
    <-6>  MnSymbolE5
   <6-7>  MnSymbolE6
   <7-8>  MnSymbolE7
   <8-9>  MnSymbolE8
   <9-10> MnSymbolE9
  <10-12> MnSymbolE10
  <12->   MnSymbolE12
}{}
\DeclareFontShape{OMX}{MnSymbolE}{b}{n}{
    <-6>  MnSymbolE-Bold5
   <6-7>  MnSymbolE-Bold6
   <7-8>  MnSymbolE-Bold7
   <8-9>  MnSymbolE-Bold8
   <9-10> MnSymbolE-Bold9
  <10-12> MnSymbolE-Bold10
  <12->   MnSymbolE-Bold12
}{}
\let\llangle\@undefined
\let\rrangle\@undefined
\DeclareMathDelimiter{\llangle}{\mathopen}%
{MnLargeSymbols}{'164}{MnLargeSymbols}{'164}
\DeclareMathDelimiter{\rrangle}{\mathclose}%
{MnLargeSymbols}{'171}{MnLargeSymbols}{'171}
\theoremstyle{plain}
\newtheorem*{MET}{Mean Ergodic Theorem (MET)}
\newtheorem{theorem}{Theorem}
\newtheorem{proposition}{Proposition}[section]
\newtheorem{lemma}[proposition]{Lemma}
\newtheorem{corollary}[proposition]{Corollary}
\theoremstyle{definition}
\newtheorem{definition}[proposition]{Definition}
\newtheorem{convention}[proposition]{Convention}
\newtheorem{notation}[proposition]{Notation}
\theoremstyle{remark}
\newtheorem{remark}[proposition]{Remark}
\newtheorem{remarks}[proposition]{Remarks}
\numberwithin{equation}{section}
\DeclareMathOperator{\AV}{AV}
\DeclareMathOperator{\card}{card}
\DeclareMathOperator{\osc}{osc}
\DeclareMathOperator{\oprod}{\prescript{\mathrm{op}}{}\prod}
\DeclareMathOperator{\spr}{spr}
\DeclareMathOperator{\Rand}{Rand}
\DeclareMathOperator{\Struct}{Struct}
\DeclareMathOperator{\Th}{Th}
\DeclareMathOperator{\tp}{tp}
\DeclareMathOperator{\wneg}{\mathrel{\ooalign{\hss$\neg$\hss\cr\kern0.2ex\raise1.0ex\hbox{\scalebox{0.7}{w}}}}}
\renewcommand{\restriction}{\mathord{\upharpoonright}}
\newcommand{\ab}{\ensuremath{a_{\bullet}}}
\newcommand{\AVb}{\AV_{\bullet}}
\newcommand{\cA}{\mathcal{A}}
\newcommand{\Amu}{\ensuremath{\llb\mathcal{\cA}\rrb_{\mu}}}
\newcommand{\AO}{\cA_{\Omega}}
\newcommand{\AOM}{\AO^{\cM}}
\newcommand{\AZ}{\cA_{\ZZ}}
\newcommand{\BB}{\mathfrak{B}}
\newcommand{\Bone}{\ensuremath{\mathbbm{I}}}
\newcommand{\cF}{\mathcal{F}}
\newcommand{\Fb}{\ensuremath{\cF_{\bullet}}}
\newcommand{\GG}{\ensuremath{\mathbb{G}}}
\newcommand{\GAB}{\ensuremath{\langle A,B\rangle}}
\newcommand{\Gab}{\ensuremath{\langle a,b\rangle}}
\newcommand{\GD}{\ensuremath{\langle\Delta^{\!\!\circ} T\rangle_{\!\bullet}}}
\newcommand{\ccAB}{\ensuremath{\cc{\GAB}}}
\newcommand{\GZ}{\ensuremath{G^{\ZZ}}}
\newcommand{\GZD}{\ensuremath{\langle\Delta^{\!\!\circ} T\rangle}}
\newcommand{\cH}{\ensuremath{\mathcal{H}}}
\newcommand{\bin}[2]{\ensuremath{\llb{#1}\boldsymbol{\in}{#2}\rrb}}
\newcommand{\cL}{\ensuremath{\mathcal{L}}}
\newcommand{\sL}{\ensuremath{\mathscr{L}}}
\newcommand{\cM}{\ensuremath{\mathscr{M}}}
\newcommand{\Nab}{\ensuremath{\nabla^{\bullet}}}
\newcommand{\cN}{\mathcal{N}}
\newcommand{\cU}{\mathcal{U}}
\newcommand{\cX}{\mathcal{X}}
\newcommand{\cY}{\mathcal{Y}}
\newcommand{\cZ}{\mathcal{Z}}
\newcommand{\degL}{\deg_{\mathrm{L}}}
\newcommand{\DD}{\ensuremath{\mathbb{D}}}
\newcommand{\dd}{\ensuremath{\mathrm{d}}}
\newcommand{\Eb}{E_{\bullet}}
\newcommand{\fb}{\ensuremath{\varphi_{\bullet}}}
\newcommand{\Tb}{\ensuremath{T_{\bullet}}}
\newcommand{\Folner}{F{\o}lner}
\newcommand{\llb}{\llbracket}
\newcommand{\rrb}{\rrbracket}
\newcommand{\llc}{\{\!\!\{}
\newcommand{\rrc}{\}\!\!\}}
\newcommand{\cc}[1]{\ensuremath{\llc{#1}\rrc}}
\newcommand{\lsub}[2]{\prescript{}{#1}{#2}}
\newcommand{\lsup}[2]{\prescript{[#1]}{}{#2}}
\newcommand{\Li}{\ensuremath{\mathscr{L}^{\infty}}}
\newcommand{\LOB}{\ensuremath{\Li_{\Omega,\BB}}}
\newcommand{\LO}{\ensuremath{\Li_{\Omega,\RR}}}
\newcommand{\LZ}{\ensuremath{\Li_{\ZZ,\RR}}}
\newcommand{\LZB}{\ensuremath{\Li_{\ZZ,\BB}}}
\newcommand{\LZH}{\ensuremath{\Li_{\ZZ,\cH}}}
\newcommand{\LZZ}{\ensuremath{\Li_{\ZZ^2,\RR}}}
\newcommand{\LZZH}{\ensuremath{\Li_{\ZZ^2,\cH}}}
\newcommand{\LZZB}{\ensuremath{\Li_{\ZZ^2,\BB}}}
\newcommand{\mub}{\ensuremath{\mu_{\bullet}}}
\newcommand{\MM}{\ensuremath{\mathfrak{M}}}
\newcommand{\MO}{\ensuremath{\mathfrak{M}_{\Omega}}}
\newcommand{\LZM}{\ensuremath{\Li_{\ZZ,\MM}}}
\newcommand{\muL}{\ensuremath{\mu_{\mathrm{L}}}}
\newcommand{\NN}{\mathbb{N}}
\newcommand{\Nrm}[1]{\left\|#1\right\|}
\newcommand{\nrm}[1]{\left|#1\right|}
\newcommand{\pair}[2]{\ensuremath{\langle{#1},{#2}\rangle}}
\newcommand{\Pair}[2]{\ensuremath{\llangle{#1},{#2}\rrangle}}
\newcommand{\PET}{\ensuremath{\mathbf{PET}}}
\newcommand{\aPET}{\ensuremath{\overline{\PET}}}
\newcommand{\Pfin}{\ensuremath{\mathcal{P}^*_{\!\mathrm{fin}}}}
\newcommand{\QQ}{\mathbb{Q}}
\newcommand{\RR}{\mathbb{R}}
\newcommand{\BS}{\mathbb{S}}
\newcommand{\ThL}{\ensuremath{{\Th_{\mathrm{Loeb}}}}}
\newcommand{\ThIntR}{\ensuremath{{\Th_{{\int:\RR}}}}}
\newcommand{\ThIntB}{\ensuremath{{\Th_{{\int:\BB}}}}}
\newcommand{\ThPET}{\ensuremath{\Th_{\PET}}}
\newcommand{\tL}{\ensuremath{\widetilde{\cL}}}
\newcommand{\tM}{\ensuremath{\widetilde{\cM}}}
\newcommand{\cT}{\ensuremath{\mathcal{T}}}
\newcommand{\UU}{\ensuremath{\mathrm{U}}}
\newcommand{\UH}{\ensuremath{\UU_{\cH}}}
\newcommand{\cW}{\ensuremath{\mathcal{W}}}
\newcommand{\ZwZ}{\ensuremath{\ZZ\wr\ZZ}}
\newcommand{\ZZ}{\ensuremath{\mathbb{Z}}}
\begin{document}
\title[Averages of unitary polynomial actions]
{Model theory and metric convergence~II:\\
  Averages of unitary polynomial actions}

\author
{Eduardo Dueñez \and José N. Iovino }

\address{Department of Mathematics\\
  The University of Texas at San Antonio\\
  One UTSA Circle\\
  San Antonio, TX 78249-0664\\
  U.S.A.}

\email{eduardo.duenez@utsa.edu}

\email{jose.iovino@utsa.edu}

\date{\today}
\thanks{We thank Xavier Caicedo, Christopher Eagle and Franklin Tall for their encouragement and feedback, as well as the Banff International Research Station for hosting the June 2016 FRG ``Topological Methods in Model Theory'' where many ideas in the Appendix to this manuscript were first conceived.}
\thanks{This research was funded by NSF grant DMS-1500615}

\subjclass[2010]{Primary: 37A30; Secondary: 03C98, 46Bxx, 28-xx}
\keywords{Mean Ergodic Theorem, PET induction, Leibman sequences, Henson structures}

\begin{abstract}
We use model theory of metric structures to prove the pointwise convergence, with a uniform metastability rate, of averages of a polynomial sequence $\{T_n\}$ (in Leibman's sense) of unitary transformations of a Hilbert space.
As a special case, this applies to unitary sequences $\{U^{p(n)}\}$ where $p$ is a polynomial $\ZZ\to\ZZ$ and $U$ a fixed unitary operator;
however, our convergence results hold for arbitrary Leibman sequences. 
As a case study, we show that the non-nilpotent ``lamplighter group''~$\ZZ\wr\ZZ$ is realized as the range of a suitable quadratic Leibman sequence.
We also indicate how these convergence results generalize to arbitrary \Folner\ averages of unitary polynomial actions of any abelian group~$\GG$ in place of~$\ZZ$. 
\end{abstract}

\maketitle

\section*{Introduction}
\label{sec:intro}

The first result on ``mean'' convergence of averages was von Neumann's 1932 Mean Ergodic Theorem~\cite{Neumann1932}:

\begin{MET}
  For any unitary operator $U$ on a Hilbert space~$\cH$ and any $x\in\cH$, the sequence $\AVb(x) = (\AV_n(x):n\in\NN)$ of pointwise averages
  \begin{equation*}
    \AV_n(x) = \frac{1}{n}\sum_{i=1}^n U^i(x)
  \end{equation*}
  converges as $n\to\infty$.
  The limit is equal to the orthogonal projection of~$x$ on the space of vectors fixed by~$U$.
\end{MET}

Historically, generalizations of von Neumann's theorem have largely followed a path influenced by a measure-theoretic viewpoint that is completely absent from the formulation above as a statement about convergence in Hilbert spaces.
We provide further historical background below.
Leaving history and measure theory aside for the moment, one may suggest the following different possible directions of generalization for MET:
\begin{enumerate}
\item Replace the sequence $(U^i:i\in\NN)$ with a ``higher-degree'' sequence $(U^{p(i)}:i\in\NN)$ where $p$ is a fixed polynomial.
\item The sequence $(T_i) = (U^{p(i)})$ above necessarily satisfies the commutativity condition $T_i\circ T_j = T_j\circ T_i$ for all $i,j$.
  To what extent can such commutativity requirement be removed?
\item What conditions on a family $(T_i)$ of unitary operators indexed by a semigroup other than~$\NN$ ensure the pointwise convergence of suitable averages?
\end{enumerate}
 Theorem~\ref{thm:PolyMET} in this manuscript is arguably the most natural generalization of von Neumann's result simultaneously in all three directions above.
(For technical reasons, Theorem~\ref{thm:PolyMET} is proved in the context of (polynomial) actions of \emph{groups} rather than semigroups.)
Theorem~\ref{thm:AbelPolyMET}, stated below, is a very particular case of more general results (Theorems~\ref{thm:PolyMET-Z}, \ref{thm:MetaPolyMET-Z} and~\ref{thm:PolyMET}). 
However, it is easiest to formulate and already generalizes MET all the way in direction~(1) and beyond.

\begin{theorem}[MET for abelian unitary polynomial actions of~$\ZZ$]\label{thm:AbelPolyMET}
  Fix $d\in\NN$.
  Let $\cH$ be a Hilbert space, and let $U_0,U_1,\dots,U_d$ be pairwise-commuting unitary operators on~$\cH$.
  For every $x\in\cH$, the sequence $\AVb(x) = (\AV_n(x) : n\in\NN)$ of averages%
\footnote{Here, ${k\choose j} = k(k-1)\cdots(k-j+1)/j!$ is the $j$-th binomial coefficient.}
  \begin{equation*}
    \AV_n(x) = \frac{1}{n+1}\sum_{0\le k\le n} U_0\circ U_1^k\circ U_2^{k\choose 2}\circ\dots\circ U_d^{k\choose d}(x)
  \end{equation*}
  converges as $n\to\infty$.

  In particular, if $p:\ZZ\to\ZZ$ is a polynomial of degree at most~$d$ and $U$ is a unitary operator on~$\cH$, then $\left( \sum_{0\le k\le n} U^{p(n)}(x)/(n+1) : n\in\NN \right)$ converges.

  Furthermore, there exists a universal metastability rate (depending only on~$d$) that applies uniformly to all sequences of averages of arbitrary~$x$ in the unit ball of an arbitrary Hilbert space~$\cH$ under arbitrary unitary operators $U_0,U_1,\dots,U_d$ on~$\cH$.
\end{theorem}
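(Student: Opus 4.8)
The plan is to split the theorem into two tasks: first, establish plain convergence of $\AVb(x)$ for each individual system $(\cH,U_0,\dots,U_d,x)$; and second, upgrade this to a single metastability rate that is uniform across all such systems and depends only on~$d$. For the first task I would run \PET\ induction. Write $v_k = U_0\circ U_1^k\circ\cdots\circ U_d^{\binom{k}{d}}(x)$, so that $\AV_n(x)$ is the $n$-th Ces\`aro average of the bounded sequence $(v_k)$ in~$\cH$. The engine is the van der Corput inequality, which controls the Ces\`aro behaviour of $(v_k)$ in terms of the averaged correlations $\pair{v_{k+h}}{v_k}$ over shifts~$h$. Since the $U_j$ commute and are unitary, $\pair{v_{k+h}}{v_k}=\pair{\prod_{j} U_j^{\,\binom{k+h}{j}-\binom{k}{j}}(x)}{x}$, and each exponent $\binom{k+h}{j}-\binom{k}{j}$ is a polynomial in~$k$ of degree $j-1$; thus a nonzero shift strictly lowers the degree of the underlying polynomial action. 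This degree drop is the reduction in \PET-complexity that drives the induction, whose base case is the linear Mean Ergodic Theorem. Arranging a complexity invariant on such polynomial systems that strictly decreases under each van der Corput step — so that the induction terminates — is the technical heart of this half.

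For the second task, the uniform rate, the approach is model-theoretic, and this is where the paper's viewpoint pays off. The decisive fact is Tao's reformulation: convergence of a sequence is equivalent to its \emph{metastability}, that for every $\varepsilon>0$ and every $F\colon\NN\to\NN$ there is an $N$ and an $n\le N$ with $\Nrm{\AV_m(x)-\AV_{m'}(x)}\le\varepsilon$ for all $m,m'\in[n,F(n)]$. The point of this reformulation is that, once $\varepsilon$ and $F$ are fixed, the assertion ``a stability window is found by stage~$N$'' refers for each candidate bound~$N$ to only finitely many of the averages, hence is a continuous, uniformly definable condition on the system. I would therefore regard each system as a metric structure — a Hilbert space equipped with a $(d+1)$-tuple of commuting unitaries and a distinguished unit-ball vector — and observe that this class is axiomatizable in continuous logic and closed under ultraproducts: the Banach-space ultraproduct of Hilbert spaces is Hilbert, unitarity and commutation transfer, and each $\AV_n$ commutes with the ultraproduct construction.

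Uniformity then follows by a compactness/contradiction argument. If no uniform rate existed, some fixed $\varepsilon$ and $F$ would witness failure: for every~$N$ there is a system $(\cH^{(N)},U_\bullet^{(N)},x^{(N)})$ in which no $n\le N$ opens a stability window, i.e.\ there are $m,m'\in[n,F(n)]$ with $\Nrm{\AV_m(x^{(N)})-\AV_{m'}(x^{(N)})}>\varepsilon$. Form the ultraproduct of these systems along a nonprincipal ultrafilter on~$\NN$. The limit is again a Hilbert space carrying a commuting unitary $(d+1)$-tuple and a unit-ball vector~$\widehat x$, and since the window failure at each fixed $n$ is a finite disjunction of atomic conditions, the transfer principle yields, for \emph{every}~$n$, a pair $m,m'\in[n,F(n)]$ with $\Nrm{\AV_m(\widehat x)-\AV_{m'}(\widehat x)}\ge\varepsilon$. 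Thus $\AVb(\widehat x)$ is not metastable, hence not convergent, contradicting the first task applied to the limit system. The main obstacle I anticipate is not this compactness step, which is clean, but the bookkeeping that makes it legitimate: verifying that the class is genuinely elementary (handling the unbounded operators through a bounded, many-sorted presentation) and that the finite-window metastability predicate is uniformly definable, so that \L o\'s's theorem transfers the failure of metastability verbatim. Once that definability is secured, the rate's dependence on~$d$ alone is automatic, since $d$ is the only parameter fixing the class.
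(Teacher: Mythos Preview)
Your compactness argument for the uniform metastability rate is essentially what the paper does: it applies a Uniform Metastability Principle (Proposition~\ref{thm:UMP}) derived from Henson-logic compactness rather than from ultraproducts and {\L}o{\'s}, but the mechanism is the same, and the paper likewise packages the system into an axiomatizable class and reads off uniformity.

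The first task, however, has a real gap. The van der Corput inequality bounds $\limsup_n\Nrm{\AV_n(x)}^2$ by iterated Ces\`aro averages of the correlations $\pair{v_{k+h}}{v_k}$, and those correlations do involve a polynomial action of strictly lower degree, so your degree-drop is genuine. But an inequality of this shape does not show that $\AV_n(x)$ is Cauchy; it shows $\AV_n(x)\to 0$ \emph{provided} the averaged correlation limits themselves average to zero---which is precisely what weak mixing supplies in Bergelson's original \PET\ setting and which you cannot assume here. What is missing from your sketch is a splitting of~$x$ into a ``structured'' piece (handled by some other argument) and a ``pseudorandom'' piece orthogonal to it (on which van der Corput does force convergence to zero). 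The paper supplies exactly this decomposition, though by a route quite different from a classical proof followed by uniformization: it works from the outset in a \emph{saturated} PET structure, where nonstandard indices $M\in\cN\setminus\NN$ yield an exact identity $(\AV_nT)\circ(\AV_MT)^*=\int\AV_n(\nabla^iT)\,d\sigma_M(i)$ (Lemma~\ref{lem:convol}) playing the role of van der Corput; structured vectors are those in the closed span of the ranges of $(\AV_MT)^*$, and their averages converge by the inductive hypothesis on $\nabla^iT$ together with a Dominated Convergence Theorem (Lemma~\ref{thm:DCT-PET}); pseudorandom vectors satisfy $\AV_MT(x)=0$ for every nonstandard~$M$, so $\AV_nT(x)\to 0$ by a saturation argument (Lemma~\ref{lem:divergence}). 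For the abelian statement specifically you could alternatively repair your outline using the spectral theorem for the commuting tuple $(U_0,\dots,U_d)$ together with Weyl equidistribution on each spectral fiber; the paper's route has the advantage that it extends unchanged to non-abelian Leibman sequences.
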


The notion of \emph{uniformly metastable convergence} above was first introduced in ergodic theory by Tao.
It is a main theme of our prior manuscript, but shall presently play a minor role~\cite{Duenez-Iovino:2017,Tao:2008,Tao2012}.

Taking a step in direction~(2), pairwise commutativity is not a necessary assumption; 
the sequence of averages under a family $(T_i)$ converges provided $i\mapsto T_i$ is a \emph{Leibman polynomial sequence} in the group $\UH$ of unitary operators on~$\cH$ (Theorems~\ref{thm:PolyMET-Z} and~\ref{thm:MetaPolyMET-Z}), but the range of this sequence need not generate an abelian group. 
The definition of Leibman polynomial sequence (Definition~\ref{def:Leibman-seq}) is motivated by the familiar fact that degree-$d$ polynomials $\RR\to\RR$ are characterized as those functions having $(d+1)$-iterated finite differences equal to zero.
The same essential definition gives the notion of \emph{Leibman polynomial mapping} from an arbitrary group~$\GG$ into~$\UH$~\cite{Leibman:2002}.
Theorem~\ref{thm:PolyMET} generalizes von Neumann's result in direction~(3) for Leibman polynomials $(T_i:i\in\GG)\subset\UH$ on abelian groups~$\GG$ endowed with a notion of averaging provided by a countable \Folner\ net. 

Continuing our historical remarks, the formulation of von Neumann's result above hides its conceptual genesis via the study of convergence of averages of square-integrable functions $f\in\sL^2(\Omega)$ on a probability space $(\Omega,\mu)$ under the action of a measure-preserving transformation $T$ of~$\Omega$.  
In this setting, MET asserts that the sequence $\AVb(f)$ of averages
\begin{equation*}
  \AV_n(f) = \frac{1}{n}\sum_{i=1}^n f\circ T^i
\end{equation*}
converges in~$\sL^2(\Omega)$ (after all, $f\mapsto f\circ T$ is a unitary transformation of~$\sL^2(\Omega)$).
This particular case of von Neumann's result explains why it is called a convergence result ``in mean'', i.e., in the mean-square (``$\sL^2$'') sense.
(By contrast, Birkhoff's Ergodic Theorem asserts the almost-everywhere pointwise convergence of the averages $\AV_n(f)$ for any $f\in\sL^1(\Omega)$~\cite{Birkhoff1931}.)
The $\sL^2$ setting entails no loss of generality since every Hilbert space~$\cH$ is realized as a space of square-integrable functions. 
However, this viewpoint is artificial for purposes of studying convergence under unitary actions (at least insofar as \emph{simple} actions are concerned, in contrast to multiple actions mentioned below).

Although generalizations of MET in direction~(1) seem very natural, we are not aware of direct proofs of Theorem~\ref{thm:AbelPolyMET}, but only of indirect proofs as byproduct of results on mean convergence of ``multiple'' ergodic averages.
Starting in the 1970's, Furstenberg pioneered the ergodic study of actions of \emph{multiple} simultaneous transformations; equivalently, the study of convergence of ``multiple averages'' of the \emph{product} of two or more measurable bounded functions on a probability space~$\Omega$ as acted upon by powers of measure-preserving transformations.
As an application of multiple averages, Furstenberg obtained a purely ergodic proof of Szemerédi's Theorem on the existence of arbitrary long arithmetic progressions in positive-density subsets of the integers~\cite{Furstenberg:1977,Szemeredi1975}.
However, Furstenberg's seminal results from the seventies did not extend von Neumann's theorem in either of the directions (1)--(3).
It was Bergelson who, in 1987, first extended some of Furstenberg's results to multiple ergodic averages of (\emph{plus quam} linear) polynomial powers of a fixed measure-preserving transformation acting on products of functions~\cite{Bergelson:1987}.
When specialized to simple measure-preserving actions, Bergelson's results are a step toward generalizing von Neumann's MET in direction~(1). 
However, there is no purely Hilbert-theoretical formulation of Bergelson's weak mixing hypothesis:
Even the convergence of pointwise averages of~$(U^{p(n)})$ stated in Theorem~\ref{thm:AbelPolyMET} only follows unconditionally from 2005 results for multiple ergodic averages of Host and Kra, and of Leibman (which depend on no mixing assumptions)~\cite{Host-Kra:2005,Leibman:2005}.

To our knowledge, Walsh's theorem~\cite{Walsh:2012} on mean convergence of nilpotent ergodic averages is the first result in the literature from which Theorem~\ref{thm:AbelPolyMET} follows as a corollary.
(Pointwise convergence of averages of~$(U^n\circ V^{n^2})$ under the assumption $U\circ V = V\circ U$ is a special case of 2009 results of Austin~\cite{Austin:2015a,Austin:2015b}.)
Thus, Walsh's theorem actually implies the convergence of averages asserted in the more general Theorem~\ref{thm:PolyMET-Z}, but only under the additional explicit hypothesis that $(T_i)$ generates a nilpotent subgroup of~$\UH$. 
However, our methods do not require a nilpotence hypothesis, but only the more intrinsic property that $(T_i)$ be a Leibman sequence in the sense of Definition~\ref{def:Leibman-seq-PET} (or in Leibman's more general sense of \emph{polynomial mapping} used in Theorem~\ref{thm:PolyMET}).  
In Section~\ref{sec:QuadLeib}, we construct a quadratic Leibman sequence whose range generates the non-nilpotent ``lamplighter group''~$\ZwZ$.

Generalizations of Walsh's theorem by Austin and Zorin-Kranich imply steps in direction~(3)~\cite{Austin:2016,ZorinKr:2016}. 
However, Theorems~\ref{thm:PolyMET-Z}, \ref{thm:MetaPolyMET-Z} and~\ref{thm:PolyMET} appear to be new in the general form stated. 
Nevertheless, given the close relation of our results to others in the existing literature, the main novelty is our ``soft'' direct approach to proving pointwise convergence of polynomial averages in Hilbert spaces using the framework of Henson metric structures.
Our viewpoint is heavily influenced by Tao's outline~\cite{Tao2012} of a nonstandard proof \emph{à la Robinson} of Walsh's theorem (although we use only standard real numbers, and none of Robinson's apparatus as such).
A significant part of the manuscript consists of natural definitions and basic results on model-theoretic notions of integration and convergence that parallel classical ones; 
nevertheless, we capture, refine, and in some cases extend such results in Henson's framework.
Section~\ref{sec:PET-struct} contains the rather long definition of the Henson class of \emph{PET structures over~\ZZ}.
Section~\ref{sec:Leibman-seqs} introduces the notion of \emph{Leibman polynomial sequence;}
it also exhibits a quadratic Leibman sequence whose range generates the non-nilpotent group~$\ZwZ$.
In Section~\ref{sec:poly-PET}, we state and prove Theorems~\ref{thm:PolyMET-Z} and~\ref{thm:MetaPolyMET-Z} on metastable convergence of polynomial unitary averages for Leibman sequences (over~\ZZ), and also explain how Theorem~\ref{thm:AbelPolyMET} follows as an immediate corollary.
In Section~\ref{sec:general}, we state and prove the most general of our ergodic convergence results in the form of Theorem~\ref{thm:PolyMET}, which generalizes MET in all three directions (1)--(3).
A number of foundational results are contained in the Appendix, which bears a close relation to our prior manuscript~\cite{Duenez-Iovino:2017}.
These results pertain to measure theory and integration of real functions, as well as abstract notions of integration of functions taking values in Banach spaces.
In this way we obtain a Dominated Convergence Theorem for notions of integration in an \emph{ad hoc} Henson class of Banach integration frameworks (Theorem~\ref{thm:DCT}).
We also show that the compactness of Henson's logic implies a Uniform Metastability Principle for convergence in models of any Henson theory (Proposition~\ref{thm:UMP}).
Via this principle, all our results on convergence of averages admit refinements to convergence with metastability rates that are universal.
These are \emph{gratis} refinements thanks to the model-theoretic approach.

\section{PET Structures}
\label{sec:PET-struct}

\subsection{Classical PET Structures}
\label{sec:PET-classical}

\begin{notation}\label{def:notation-sorts}
Below we list a number of \emph{formal symbols} $\RR, \ZZ, \NN, \cH, \dots$ that will eventually become \emph{sort descriptors} for a Henson language of metric structures.  
However, throughout this subsection, these symbols have the following classical interpretations:
  \begin{itemize}
  \item $\RR,\ZZ,\NN$ shall denote the sets of real numbers, integers and naturals.
  \item $\cH$ shall denote a real Hilbert space.
  \item $\BB$ shall denote the real Banach algebra $\BB(\cH,\cH)$ of bounded operators on~$\cH$.
  \item $\AZ$ shall denote the Boolean algebra of all subsets of~$\ZZ$.
  \item $\MM$ shall denote the real Banach space of signed finite measures on~$\ZZ$ (i.e., on the measure space $(\ZZ,\AZ)$).
  \item $\LZ$ shall denote the Banach space $\Li(\ZZ,\RR)$ of bounded real functions on~$\ZZ$.
  \item $\LZH$ shall denote the Banach space $\Li(\ZZ,\cH)$ of bounded functions $\ZZ\to\cH$.
  \item $\LZB$ shall denote the Banach space $\Li(\ZZ,\BB)$ of bounded functions $\ZZ\to\BB$.
  \item $\LZM$ shall denote the Banach space $\Li(\ZZ,\MM)$ of bounded functions $\ZZ\to\MM$.
  \item $\LZZ$ shall denote the Banach space $\Li(\ZZ\times\ZZ,\RR)$ of bounded real functions on~$\ZZ\times\ZZ$.
  \item $\LZZH$ shall denote the Banach space $\Li(\ZZ\times\ZZ,\cH)$ of bounded functions~$\ZZ\times\ZZ\to\cH$.
  \item $\LZZB$ shall denote the Banach space $\Li(\ZZ\times\ZZ,\BB)$ of bounded functions~$\ZZ\times\ZZ\to\BB$.
  \end{itemize}
\end{notation}

From a model-theoretic viewpoint, the sets $\RR,\NN,\ZZ,\dots$ denoted by the formal symbols above are the \emph{sorts} of a metric Henson structure~$\cM$. 
(Discrete sorts $\NN$, $\ZZ$, $\AZ$ are still viewed as metric spaces endowed with the discrete metric.)
In addition, $\cM$ is endowed with a number of distinguished elements (``constants'') and continuous functions between sorts.  
The distinguished elements include:
\begin{itemize}
\item All elements of~$\NN$ and~$\ZZ$.
\item All rational numbers in~$\RR$.
\item The zero element of each real Banach space above ($\cH, \BB, \MM, \LZ, \dots$).
\item The identity operator $I\in\BB$.
\item The zero (empty set $\emptyset$) and unity (improper subset $\ZZ\subseteq\ZZ$) of the Boolean algebra~$\AZ$.
\end{itemize}
The distinguished functions between sorts include:
\begin{itemize}
\item The discrete metric in each the discrete sorts~$\ZZ$, $\NN$, $\AZ$.
  \item The operations of addition, subtraction, multiplication, absolute value, and lattice operations (binary minimum and maximum) on~$\RR$.
\item The order $\le$ of~$\NN$, identified with its characteristic function $\llb\cdot\!\le\!\cdot\rrb : \NN\times\NN\to\{0,1\}$.
\item The membership relation from $\ZZ$ to $\AZ$, identified with its characteristic function $\bin{\cdot}{\cdot} : \ZZ\times\AZ\to\{0,1\}$.
  \item The group operations (unary negation, binary addition and subtraction) of~$\ZZ$.
\item The operations of union, intersection and complementation on~$\AZ$.
\item The Hilbert space operations (addition, scalar multiplication, and inner product~$(x,y)\mapsto x\cdot y$) on~$\cH$.  
For convenience, also the norm $\Nrm{x} = \sqrt{x\cdot x}$.
  \item The operations of addition and scalar product, and the Banach norm~$\Nrm{\cdot}$ on each Banach sort $\BB,\MM,\Li_{X,Y}$.
\end{itemize}
For $f\in\Li_{X,Y}$, the Banach norm is $\Nrm{f} = \sup_{x\in X}\Nrm{f(x)}$, where $\Nrm{f(x)}$ is the norm of $f(x)$ as an element of Banach sort~$Y$.
The Banach norm on~$\BB$ is $\Nrm{T} = \sup\{\Nrm{T(x)} : x\in\cH, \Nrm{x}\le 1\}$.
The Banach norm on~$\mu\in\MM$ is ``total variation'': 
Recall that $\mu$ has an atomic decomposition $\mu = \sum_{i\in\ZZ}c_i\delta_i$ where $\delta_i$ is the unit mass at~$i$ and $c_i = \mu(\{i\})$.
With this notation, $\Nrm{\mu} = \sum_i|c_i|$.

(To abbreviate the long list of distinguished functions, above and in what follows we use $X$ to denote either of the ``domain'' discrete sets $\ZZ$, $\ZZ^2$ of the various sorts $\Li$, and $Y$ to denote the ``codomain'' Banach sorts $\RR$, $\cH$, $\BB$, $\MM$.)

The list of distinguished functions continues as follows:

\begin{itemize}
\item The operations $\Li_{X,\cH}\times\Li_{X,\cH}\to\Li_{X,\RR}$ induced by (pointwise) application of the inner product of~$\cH$.
\item The unary operation of pointwise absolute value $\nrm{\cdot}$ and the binary lattice operations (pointwise $\max$ and $\min$) on sorts $\Li_{X,\RR}$.
\item The unary operation~$\nrm{\cdot}$ of \emph{measure of total variation} and the binary lattice operations (``pointwise'' $\max$ and $\min$) on~$\MM$ (i.e., $\nrm{\mu} = \sum_i \nrm{a_i}\delta_i$, $\max(\mu,\nu) = \sum_i\max\{a_i,b_i\}\delta_i$, and $\min(\mu,\nu) = \sum_i\min\{a_i,b_i\}\delta_i$ if $\mu = \sum_i a_i\delta_i$ and $\nu = \sum_i b_i\delta_i$).
\item The operation of pointwise magnitude $\nrm{\cdot} : \Li_{X,Y}\to\Li_{X,\RR}$, namely $|f| : x\mapsto \Nrm{f(x)}$ for any $f\in\Li_{X,Y}$.
  \item The unary adjoint operation $T\mapsto T^{*}$ on~$\BB$, and the corresponding induced operations (pointwise adjoint) on sorts~$\Li_{X,\BB}$.
  \item The binary operation $(S,T)\mapsto S\circ T$ of composition on~$\BB$, and the corresponding induced operations of pointwise composition on sorts~$\Li_{X,\BB}$.
  \item The inclusions:
    \begin{itemize}
    \item $\ZZ\hookrightarrow\AZ : i\mapsto\{i\}$.
    \item $\AZ\hookrightarrow\LZ : A\mapsto\chi_A$ where $\chi_A$ is the characteristic function of the subset~$A\subseteq\ZZ$.
  \item $\ZZ\hookrightarrow\MM$ given by $i\mapsto\delta_i$ (the unit point mass at~$i$).
    \item $Y\hookrightarrow\Li_{X,Y}$, with $y\in Y$ identified with the constant function $y(\blacksquare):x\mapsto y$ in~$\Li_{X,Y}$;
    \item The right inclusion map $\Li_{\ZZ,Y}\hookrightarrow\Li_{\ZZ^2,Y}$ whereby $f\in\Li_{\ZZ,Y}$ is identified with $f(\blacksquare,\cdot) : (w,x)\mapsto f(x)$; also, the analogous left inclusion map identifying~$f$ with $f(\cdot,\blacksquare) : (w,x)\mapsto f(w)$.
    \end{itemize}
  \item The function-evaluation maps
    \begin{itemize}
    \item $(T,x)\mapsto T(x)$ from $\BB\times\cH$ to~$\cH$.
    \item $(f,x)\mapsto f(x)$ from $\Li_{X,Y}\times X$ to~$Y$;
    \end{itemize}
   Also, the maps $\Li_{X,\BB}\times\Li_{X,\cH}\to\Li_{X,\cH}$ induced by pointwise evaluation.
  \item The partial evaluation maps:
    \begin{itemize}
    \item Left evaluation $\Li_{\ZZ^2,Y}\times\ZZ\to\Li_{\ZZ,Y}$, namely $(F,i)\mapsto F(i,\cdot)$ where $F(i,\cdot) : j\mapsto F(i,j)$.
    \item Right evaluation $\Li_{\ZZ^2,Y}\times\ZZ\to\Li_{\ZZ,Y}$, namely $(F,j)\mapsto F(\cdot,j)$ where $F(\cdot,j) : i\mapsto F(i,j)$.
    \end{itemize}
  \end{itemize}
(Note that the left evaluation map allows us to identify $\Li_{\ZZ^2,Y}$ with the space $\Li(\ZZ,\Li_{\ZZ,Y})$ of all bounded functions $\ZZ\to\Li_{\ZZ,Y}$---thus making a potential sort $\Li(\ZZ,\Li_{\ZZ,Y})$ superfluous. 
We also have a different identification of $\Li(\ZZ,\Li_{\ZZ,Y})$ with $\Li(\ZZ^2,Y)$ via right evaluation.) 
  \begin{itemize}
  \item The \emph{\Folner-measure map} $\sigma : \NN\to\MM$, where 
    \begin{equation*}
      \sigma_n = \frac{1}{n+1} \sum_{0\le i\le n}\delta_i\qquad
      \text{for all $n\in\NN$.}
    \end{equation*}
    ($\sigma_n$ is the average of unit point masses at the points $0,1,2,\dots,n$.)
  \item The translation action of~$\ZZ$ on~$\Li_{\ZZ,Y}$. 
We regard this action as a function $\Li_{\ZZ,Y}\to \Li_{\ZZ^2,Y} = \Li(\ZZ,\Li_{\ZZ,Y})$ (with the latter identification by partial evaluation on the left). 
The action is denoted $f\mapsto\lsub{\bullet}f$ where $\lsub{\bullet}f \in \Li(\ZZ,\Li_{\ZZ,Y})$ is the function $i\mapsto\lsub{i}f$ with $\lsub{i}f\in\Li_{\ZZ,Y}$ the function $j\mapsto f(i+j)$.
  \item The \emph{shear transformation} $\Li_{\ZZ^2,Y}\to\Li_{\ZZ^2,Y}$, namely $F\mapsto \widetilde{F}$ where $\widetilde{F}:(i,j) \mapsto F(i,i+j)$.
  \item The translation action of~$\ZZ$ on~$\MM$, regarded as a mapping $\MM\to\LZM$ and denoted $\mu\mapsto\lsub{\bullet}\mu$ where $\lsub{\bullet}\mu\in\LZM$ is the mapping $i\mapsto\lsub{i}\mu$, with $\lsub{i}\mu\in\MM$ the measure $\mu$ shifted by~$-i$, namely
    \begin{equation*}
      \lsub{i}\mu = \sum_{j\in\ZZ} a_j\delta_{j+i}\qquad
\text{if}\ \mu = \sum_{j\in\ZZ} a_j\delta_j,
    \end{equation*}
    which is classically characterized by the property that $\pair{f}{\mu} = \pair{\lsub{i}f}{\lsub{i}\mu}$ for all $f\in\LZ$ and $i\in\ZZ$.
  \item The involutions $\Li_{\ZZ^2,Y}\to\Li_{\ZZ^2,Y}$ induced by the involution $(i,j)\mapsto(j,i)$ of $\ZZ^2$.
  \item The integration operations 
    \begin{itemize}
    \item $\Li_{\ZZ,Y}\times\MM \to Y : (f,\mu)\mapsto\pair{f}{\mu} = \sum_{i\in\ZZ}c_if(i)$ for $\mu = \sum_ic_i\delta_i$.
    \item (Left integral) $\MM\times\Li_{\ZZ^2,Y} \to \Li_{\ZZ,Y} : (\mu,F)\mapsto\Pair{\mu}{F}$, where $\Pair{\mu}{F}\in\Li_{\ZZ,Y}$ is the function $j\mapsto\pair{F(\cdot,j)}{\mu} = \sum_ic_iF(i,j)$.
    \item (Right integral) $\Li_{\ZZ^2,Y}\times\MM \to \Li_{\ZZ,Y} : (F,\mu)\mapsto\Pair{F}{\mu}$, where $\Pair{F}{\mu}\in\Li_{\ZZ,Y}$ is the function $i\mapsto\pair{F(i,\cdot)}{\mu} = \sum_jc_jF(i,j)$.
    \item $\Li_{\ZZ^2,Y}\times\LZM \to \Li_{\ZZ,Y} : (F,\mub)\mapsto\Pair{F}{\mub}$ where $\Pair{F}{\mub}\in\Li_{\ZZ,Y}$ is the function $j \mapsto \pair{F(\cdot,j)}{\mu_j}$ (i.e., the operation induced by ``pointwise integration'' when $\Li_{\ZZ^2,Y}$ is identified with $\Li(\ZZ,\Li_{\ZZ,Y})$ via left partial evaluation).
    \end{itemize}
  \end{itemize}
For visual convenience, we may use integral notation and write $\int\! f\,d\mu$ or $\int\!f(i)\,d\mu(i)$ for $\pair{f}{\mu}$, and $\int\!F(i,\cdot)\,d\mu(i)$ for $\Pair{\mu}{F}$ (resp., $\int\!F(\cdot,j)\,d\mu(j)$ for $\Pair{F}{\mu}$).
  \begin{remarks}
    \begin{itemize}
    \item There are redundancies on the list of functions above.
    For instance, the $\sL^2$-norm on~$\cH$ is implicitly defined by its inner product: $\Nrm{x}^2 = x\cdot x$.
    As a less trivial example, the action of $\ZZ$ on~$\Li_{\ZZ,Y}$ is obtained from the right inclusion $\Li_{\ZZ,Y}\hookrightarrow\Li_{\ZZ^2,Y}$ followed by the shear transformation.
    However, for reasons of exposition we make no effort to present a minimal list of distinguished functions.
The model-theoretic approach fundamentally requires that all sorts, functions and constants that are relevant to the problem at hand be part of the structures under study.

  \item The nonstrict order relations ($\le$ and $\ge$) of~$\RR$ are the only predicate symbols of a Henson language.  
However, any discrete predicate $P$ may be identified with a $\{0,1\}$-valued function $\chi_P$ (the characteristic function of the truth set of~$P$), so the usual interpretation of~$P(x)$ (resp., of~$\neg P(x)$) agrees with the interpretation of the Henson formula $\chi_P(x)\ge 1/2$ (resp., of~$\chi_P(x)\le 1/2$).
\end{itemize}
\end{remarks}

\begin{definition}[Classical PET structure over~\ZZ]
\label{def:PET-classical}
A \emph{classical PET structure (over~\ZZ)} is a triple $\cM = (\mathbf{S},\mathbf{C},\mathbf{F})$ where
\begin{equation*}
\mathbf{S} = (\RR,\NN,\ZZ,\AZ,\cH,\BB,\MM,\LZ,\LZH,\LZB,\LZM,\LZZ,\LZZH,\LZZB)
\end{equation*}
is a collection of \emph{sorts}, $\mathbf{C}$ is a collection of distinguished elements (\emph{constants}), and $\mathbf{F}$ is a collection of distinguished functions between sorts, provided these sorts, constants and functions are obtained in the manner prescribed by Notation~\ref{def:notation-sorts}.
\end{definition}

\subsection{Abstract PET structures}
\label{sec:PET-abstract}

\begin{definition}[Henson signature and language for PET structures over~$\ZZ$]
\label{def:PET-signature}
The \emph{Henson signature for PET structures over~$\ZZ$} consists of three ingredients:
\begin{itemize}
\item A collection of formal symbols, called \emph{sort descriptors} (or \emph{sort names}) in one-to-one correspondence with the collection~$\mathbf{S}$ of sorts of a classical PET structure. 
For definiteness, the collection of descriptors is taken to be
\begin{equation*}
  (\RR,\NN,\ZZ,\AZ,\cH,\BB,\MM,\LZ,\LZH,\LZB,\LZM,\LZZ,\LZZH,\LZZB)
\end{equation*}
its members regarded as purely formal symbols.
\item A collection of lexical \emph{constant symbols} containing a unique symbol $\mathtt{c}$ for each of the distinguished elements in Definition~\ref{def:PET-classical}, with each such symbol endowed with a sort descriptor~$s$ naming that sort to which the element $c$ named by~$\mathtt{c}$ belongs per Definition~\ref{def:PET-classical}.
\item A collection of lexical \emph{function symbols} containing a unique symbol $\mathtt{f}$ for each of the functions named in Definition~\ref{def:PET-classical}, with each such symbol endowed with a \emph{sort-specification} of the form $s_1\times\dots\times s_n\to s_0$ where $s_0,s_1,\dots,s_n$ are sort descriptors chosen in accordance with the specification of the domain (Cartesian product of sorts named by $s_1,\dots,s_n$) and codomain (sort named by~$s_0$) of the function~$f$ named by the symbol~$\mathtt{f}$.
\end{itemize}
The \emph{Henson language $\cL$ for PET structures over~\ZZ} is the Henson language (of positive bounded formulas) whose signature is the one just described~\cite{Henson-Iovino:2002,Iovino:2014,Duenez-Iovino:2017}.
\end{definition}

\begin{definition}[PET structure over~$\ZZ$]
\label{def:PET-abstract}
Let $\cL$ be the Henson language for PET structures. 
Let $\PET$ be the class of all classical PET structures over~$\ZZ$ per Definition~\ref{def:PET-classical}, and let ${\ThPET}$ be the $\cL$-theory of \PET\ in Henson's logic of approximate satisfaction of positive bounded formulas.
An \emph{(abstract) PET structure over~$\ZZ$} is a model of~${\ThPET}$.  
\end{definition}
The class \aPET\ of abstract PET structures obviously extends~\PET.

\begin{remarks}
  \begin{itemize}
  \item In principle, one may provide an explicit axiomatization in positive bounded Henson formulas of the class~\PET.
However, given the large number of sorts and functions in a PET structure this task is impractical. 
We refer the reader to our prior manuscript in which we provide explicit Henson axiomatizations of certain classes of structures somewhat more general than \PET~\cite{Duenez-Iovino:2017}.
Nevertheless, it should be clear that the Henson theory ${\ThPET}$ is \emph{uniform} in the sense that it imposes bounds on constants as well as local bounds and local moduli of uniform continuity on distinguished functions. 
Moreover, ${\ThPET}$ obviously is identical to the theory $\Th_{\aPET}$ of all abstract PET structures. 

\item The \Folner\ map $\sigma : \NN\to\MM$ per Notation~\ref{def:notation-sorts} implies a particular choice of a ``notion of averaging'' over~$\ZZ$ that is built into~${\ThPET}$. 
Nonequivalent definitions of the PET class over~$\ZZ$ and of $\aPET$ are obtained by changing this choice (e.g., letting $\sigma_n = 1/(2n+1)\sum_{-n\le i\le n}\delta_i$ in classical structures), but Theorems~\ref{thm:PolyMET-Z} and~\ref{thm:MetaPolyMET-Z} on PET structures over~$\ZZ$ remain true under such alternate choice (in fact, they are special cases of the more general Theorem~\ref{thm:PolyMET}).

\item If $\cM$ is a PET structure, then the $\RR$-named sort $\RR^{\cM}$ of~$\cM$,  under the corresponding operations $+_{\RR}^{\cM}, -_{\RR}^{\cM}, \dots$, is (isomorphic to) the standard real numbers;
  we shall identify~$\RR^{\cM}$ with~$\RR$.
Correspondingly, the ``Hilbert sort''  $\cH^{\cM}$ of~$\cM$ is a classical real Hilbert space.
Typically, the $\NN$-named sort $\cN = \NN^{\cM}$ of $\cM$ is a proper extension of the set $\NN$ of standard natural numbers (when the latter is identified with the set of interpretations~$\mathtt{m}^{\cM}$ of the constant symbols~$\mathtt{m}$ of~$\cL$, one for each standard natural~$m$)%
\footnote{The language $\cL$ has constants naming only the standard integers and natural numbers, but no nonstandard elements of the sorts $\NN^{\cM}$, $\ZZ^{\cM}$.}, and similarly~$\cZ=\ZZ^{\cM}$ extends~$\ZZ$ in general.
While $\BB^{\cM}$ may be identified (via the evaluation map $\BB\times\cH\to\cH$) with an algebra of bounded operators on~$\cH^{\cM}$, it need not contain all bounded operators.
The sort $\AZ^{\cM}$ may be identified (via $\llb\cdot\in\cdot\rrb$) with a Boolean algebra of some, but not necessarily all subsets of~$\ZZ^{\cM}$, while $(\LZ)^{\cM}$ may be identified (via the evaluation map) with a space of (not necessarily all) bounded functions $\cZ\to\RR$.

One of the subtlest differences between classical and abstract PET structures is the fact that $\MM^{\cM}$ typically consists of measures that are finitely but not countably additive on~$\ZZ^{\cM}$ (in particular, such measures need not have atomic decompositions as in the classical case).  
Fortunately, this difference turns out not to be critical, at least if one works in saturated PET structures:  
In this setting, the interplay between sorts $\ZZ^{\cM}$, $\AZ^{\cM}$ and~$(\LZ)^{\cM}$ comes to the rescue via analogues of Loeb measure and Loeb integration~\cite{Duenez-Iovino:2017}.
Appendix~\ref{sec:Loeb} contains a basic discussion of Loeb structures.
\end{itemize}
\end{remarks}

\section{ Leibman sequences}
\label{sec:Leibman-seqs}

\subsection{Classical Leibman sequences}
\label{sec:Leibman-classical}

Leibman introduced the notion of \emph{polynomial sequences} in a group~$G$~\cite{Leibman:1998}.
Leibman's polynomial sequences in~$G$ generalize sequences (indexed by~$\ZZ$) of the form $(g^{p(j)}:j\in\ZZ)$ where $p : \ZZ\to\ZZ$ is a polynomial and $g\in G$ is fixed.
Fix such a sequence ${\Tb} = (T_j:j\in\ZZ)$ where $T_j = g^{p(j)}$.
For fixed~$i\in\ZZ$, the sequence $\Delta^{\!i}{\Tb} = (T_{i+j}\circ T_j^* : j\in\ZZ)$ of ``step-$i$ discrete differences'' of~${\Tb}$ is of the form $(g^{q(j)})$ where $q = \Delta^{\!i}p : j\mapsto p(i+j)-p(j)$ is a polynomial of degree less than~$p$ (or possibly the zero polynomial).  
This motivates Leibman's recursive definition of polynomial sequence as follows.

\begin{definition}[Discrete difference and Leibman sequence] \label{def:Leibman-seq}
  Let $G$ be a multiplicative group with identity~$I$.
  When convenient, the inverse $g^{-1}$ of an element $g$ of~$G$ will be denoted~$g^{*}$.
  Let $\GZ$ be the group of all $\ZZ$-sequences $T : j\mapsto T_j$ from $\ZZ$ into~$G$ under the operation of pointwise multiplication induced from~$G$, and endow~$\GZ$ with the translation action $\lsub{i}T$ of~$\ZZ$, namely $(\lsub{i}T)_j = T_{i+j}$.
  For $i\in\ZZ$, the \emph{discrete-difference operator} is the function $T\mapsto\Delta^{\!i}T := \lsub{i}T\cdot T^{*}$ from $\GZ$ to $\GZ$;
  it is uniquely characterized by the identity
\begin{equation*}
  (\Delta^{\!i}T)_j = T_{i+j}\cdot T^*_j \qquad\text{for all $j\in\ZZ$.}
\end{equation*}
(Since $(T^{*})_j = (T_j)^{*}$, parentheses may be omitted without ambiguity.)
We will also omit parentheses when writing iterated discrete differences;
thus, $\Delta^{\!i}\Delta^{\!j}T$ means $\Delta^{\!i}(\Delta^{\!j}T)$.

Let $\Bone$ denote the constant sequence $j\mapsto I$. 
Given $d\in\NN$, a \emph{Leibman sequence in~$G$ of degree at most~$d$} is any $T\in \GZ$ all of whose $(d+1)$-fold iterated discrete differences are trivial, i.e., 
\begin{equation*}
  \Delta^{i_d}\dots\Delta^{i_1}\Delta^{i_0}T = \Bone\qquad
\text{for all $i_0,i_1,\dots,i_d\in\ZZ$.}
\end{equation*}
A Leibman sequence is a Leibman sequence of any degree~$d$; its degree $\degL\! T$ is the least such~$d$. 
(We define formally $\degL\!\Bone = -\infty$.)
A Leibman sequence $T$ of degree at most~$0$ is called \emph{translation-invariant} or \emph{constant};
it is of the form $T = g\Bone$ for some $g\in G$ (i.e., $T_k = g$ for all $k\in\ZZ$).
\end{definition}

\begin{remarks}
  \begin{itemize}
  \item The definition of Leibman sequence above is indirect and recursive;
it involves only the group structures of~$(\ZZ,+)$ and~$(G,\cdot)$, but not the product of~$\ZZ$ as one might otherwise expect from the usual construction of polynomials starting with monomials built from multiplication.
\item It can be shown (by an application of the usual method of finite differences) that if $T$ is a Leibman sequence of degree at most~$d$ in an \emph{abelian} group~$G$, then there exist $g_0,g_1,\dots,g_d\in G$ such that
\begin{equation*}
  T_k = g_0\cdot g_1^k\cdot g_2^{k\choose 2}\cdot\ldots\cdot g_d^{k\choose d}\qquad
\text{for all $k\in\ZZ$,}
\end{equation*}
where ${k\choose j} = k(k-1)\cdots(k-j+1)/j!$ is the $j$-th binomial coefficient.%
\footnote{See Proposition~\ref{prop:quad-Leib-fmla} below for the case of sequences in~$G$ abelian that are at most quadratic.}
One may regard $g_0,g_1,\dots,g_d$ as the ``coefficients'' of the Leibman polynomial~$T$.
In particular, this abelian setting comprises all families $(g^{p(k)})$ where $p$ is a polynomial $\ZZ\to\ZZ$ and $g\in G$ is fixed.
Theorem~\ref{thm:AbelPolyMET} states the convergence of ergodic averages in the abelian case;
nevertheless, Theorems~\ref{thm:PolyMET-Z}, \ref{thm:MetaPolyMET-Z} and~\ref{thm:PolyMET} only assume that $T$ is a unitary Leibman sequence per Definition~\ref{def:Leibman-seq-PET}, but no additional explicit commutativity hypotheses. 
   \item Translations commute with inversion and with discrete differences, i.e., $(\lsub{j}T)^{*} = \lsub{j}(T^{*})$ and $\lsub{j}(\Delta^{\!i}T) = \Delta^{\!i}(\lsub{j}T)$.  
(The latter equality depends on the commutativity of addition on~$\ZZ$.)
In particular, Leibman degree is invariant under translation.
However, the discrete difference operators do \emph{not} commute with adjoints, so Leibman degree is not invariant under taking adjoints.
Correspondingly, $T^{*}$ need not be a Leibman polynomial if~$T$ is. 
\end{itemize}
\end{remarks}

Leibman sequences of degree at most~$1$ are easily characterized:

\begin{proposition}\label{prop:Leibman-linear}

  Given any fixed choice of $a,b\in G$, there exists a unique Leibman sequence~$T$ of degree at most~$1$ satisfying $b=T_0$ and $a = \Delta^{\!1}T_0$, namely $T : k \mapsto a^kb$.
\end{proposition}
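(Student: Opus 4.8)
The plan is to establish existence by directly verifying that the proposed sequence $T : k\mapsto a^kb$ has the required properties, and then to prove uniqueness by showing that the degree-$\le 1$ hypothesis rigidly constrains $T$ through a first-order recurrence whose solution is forced once $T_0$ and the constant value of the first difference are fixed.

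For existence, I would first compute the discrete differences of the candidate $T_k = a^kb$ directly from the defining identity $(\Delta^{\!i}T)_j = T_{i+j}\cdot T^*_j$. A one-line calculation gives $(\Delta^{\!i}T)_j = a^{i+j}b\,(a^jb)^* = a^{i}$, so that $\Delta^{\!i}T = a^{i}\Bone$ is the \emph{constant} sequence with value $a^i$ for every $i\in\ZZ$. Since any constant sequence has vanishing discrete difference, this shows $\Delta^{\!i_1}\Delta^{\!i_0}T = \Bone$ for all $i_0,i_1$, i.e.\ $\degL T\le 1$. It remains to check the two normalizations: $T_0 = a^0b = b$ and $(\Delta^{\!1}T)_0 = a^1 = a$, both immediate from the computation above.

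For uniqueness, suppose $T\in\GZ$ is any Leibman sequence with $\degL T\le 1$, $T_0 = b$, and $(\Delta^{\!1}T)_0 = a$. Specializing the degree hypothesis $\Delta^{\!i_1}\Delta^{\!i_0}T = \Bone$ to $i_0 = 1$ yields $\Delta^{\!i_1}(\Delta^{\!1}T) = \Bone$ for all $i_1$, i.e.\ $\Delta^{\!1}T$ is a Leibman sequence of degree at most~$0$; by Definition~\ref{def:Leibman-seq} it is therefore constant, and its constant value is $(\Delta^{\!1}T)_0 = a$. Unwinding the definition of $\Delta^{\!1}$, this says $T_{k+1}\cdot T_k^* = a$, equivalently $T_{k+1} = a\cdot T_k$, for every $k\in\ZZ$. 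Together with the initial condition $T_0 = b$, this recurrence determines $T$ completely: a straightforward induction in both directions, using $T_{k+1} = a\,T_k$ upward and $T_k = a^{-1}T_{k+1}$ downward, gives $T_k = a^kb$ for all $k\in\ZZ$, matching the candidate.

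The computations here are entirely routine, so there is no serious obstacle; the only point requiring a moment's care is the correct reading of the ``degree at most~$1$'' condition in uniqueness, namely recognizing that it forces the \emph{first} difference $\Delta^{\!1}T$ to be genuinely constant (not merely that some single iterated difference vanishes), which is exactly what converts the problem into the elementary recurrence $T_{k+1} = a\,T_k$.
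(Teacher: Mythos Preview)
Your proof is correct and is precisely the straightforward argument the paper has in mind; indeed, the paper does not give a proof at all but simply states that the proof ``is left to the reader.'' Your existence computation and the uniqueness argument via the recurrence $T_{k+1}=aT_k$ are exactly what is expected.
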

The straightforward proof of Proposition~\ref{prop:Leibman-linear} is left to the reader.

\subsection{Quadratic Leibman sequences}
\label{sec:QuadLeib}

In this section we characterize classical Leibman sequences that are quadratic, i.e., of degree at most~$2$.
In particular, we construct a quadratic Leibman sequence whose range generates the non-nilpotent ``lamplighter'' group~$\ZwZ$ (Corollary~\ref{cor:quadr-Leibman-nonnil}).
Throughout this section, $G$ will denote a multiplicative group with identity~$I$;
the inverse~$g^{-1}$ of~$g\in G$ is denoted~$g^{*}$ when convenient.

In what follows, we fix a quadratic Leibman sequence~$T$.
One may suspect that $T$ is uniquely characterized by three constants, say $a = \Delta^{\!1}\Delta^{\!1}T_0$, $b = \Delta^{\!1}T_0$ and $c = T_0$;
this is easily shown to be true (See Proposition~\ref{prop:Leibman-quadratic} below).
However, in contrast to Proposition~\ref{prop:Leibman-linear}, the constants $a,b,c$ are not arbitrary:
The requirement that they correspond to a \emph{bona fide} Leibman sequence~$T$ imposes nontrivial relations among $a$ and~$b$:
They must generate a factor of~$\ZwZ$ by Propositions~\ref{prop:Leibman-quadratic} and~\ref{prop:quad-Leib-wreath}.

  \begin{proposition}\label{prop:Leibman-quadratic}
    Given a quadratic Leibman sequence~$T$ in a group~$G$, the elements $a = \Delta^{\!1}\Delta^{\!1}T_0 = T_2T_1^{*}T_0T_1^{*}$ and $b = \Delta^{\!1}T_0 = T_1T_0^{*}$ satisfy the commutation relations
    \begin{equation}
      \label{eq:commut-relns-G}
      a\cdot \lsup{b^k}\!a = \lsup{b^k}\!a\cdot a\qquad\text{for all $k\in\ZZ$,}
    \end{equation}
    where $\lsup{h}\!g := hgh^{*}$ is the conjugate of $g$ by~$h$.
    Conversely, given $a,b,c\in G$ such that the above relations hold for~$a$ and~$b$, there exists a unique quadratic Leibman sequence~$T$ satisfying $\Delta^{\!1}\Delta^{\!1}T_0 = a$, $\Delta^{\!1}T_0 = b$ and $T_0 = c$.
  \end{proposition}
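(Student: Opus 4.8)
The plan is to reduce the statement to a single computation about how the difference operators interact with the conjugation action, organized around the $b$-conjugates $a_m := \lsup{b^{m}}{a}$ of $a$ (so $a_0=a$). First I record two reductions. By Definition~\ref{def:Leibman-seq}, $T$ has degree at most $2$ iff $\Delta^{\!i}T$ has degree at most $1$ for every $i\in\ZZ$; and a sequence $V\in\GZ$ has degree at most $1$ iff $\Delta^{\!1}V$ is constant, in which case $V_k=(\Delta^{\!1}V)^{k}V_0$ (this is essentially Proposition~\ref{prop:Leibman-linear}: if $\Delta^{\!1}V\equiv\alpha$ then $V_{k+1}=\alpha V_k$, so every difference of $V$ equals $\alpha$). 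Applying the second reduction to $V=\Delta^{\!1}T$, whose value at $0$ is $b$ and whose first difference is the constant $\Delta^{\!1}\Delta^{\!1}T\equiv a$, gives $\Delta^{\!1}T\colon k\mapsto a^{k}b$, i.e. the recursion $T_{k+1}=a^{k}b\,T_k$. Together with $T_0=c$ this determines $T$ uniquely, which already yields the uniqueness clauses in both directions.

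The crux is to express, through this recursion, the condition that each $\Delta^{\!i}T$ be of degree at most $1$, and to watch it collapse to~\eqref{eq:commut-relns-G}. I would use the cocycle identity $\Delta^{\!i+1}T=(\lsub{i}{\Delta^{\!1}T})\cdot\Delta^{\!i}T$ (immediate from $(\Delta^{\!i+1}T)_j=T_{j+i+1}T_{j+i}^{*}\cdot T_{j+i}T_j^{*}$), where $(\lsub{i}{\Delta^{\!1}T})_k=a^{k+i}b$. Writing the degree-$\le 1$ sequence $\Delta^{\!i}T$ in the normal form $k\mapsto\gamma_i^{k}\beta_i$ with $\gamma_i=\Delta^{\!1}\Delta^{\!i}T$ and $\beta_i=(\Delta^{\!i}T)_0=T_iT_0^{*}$, and matching the two sides of the cocycle identity, I get $\beta_{i+1}=a^{i}b\,\beta_i$ together with the requirement $\gamma_{i+1}^{k}=a^{k}\bigl(\lsup{a^{i}b}{\gamma_i}\bigr)^{k}$ for all $k\in\ZZ$. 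Setting $\delta_i:=\lsup{a^{i}b}{\gamma_i}$, this holds for all $k$ iff $a$ commutes with $\delta_i$ (the case $k=2$ forces commutativity, and then $\gamma_{i+1}=a\,\delta_i$). Thus $T$ has degree at most $2$ iff $[a,\delta_i]=I$ for all $i\ge 1$; the indices $i\le -1$ are subsumed by the pointwise-inverse symmetry $\Delta^{\!-i}T=(\lsub{-i}{\Delta^{\!i}T})^{*}$, which preserves degree at most $1$.

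It remains to identify the conditions $[a,\delta_i]=I$ with the relations~\eqref{eq:commut-relns-G}. I would argue by induction on $i$: assuming $[a,a_1]=\dots=[a,a_{i-1}]=I$, one computes $\gamma_i=a_0a_1\cdots a_{i-1}$ and $\delta_i=\lsup{a^{i}b}{\gamma_i}=a^{i}(a_1\cdots a_i)a^{-i}$, whence, after using the inductive relations to move the powers of $a$ past $a_1,\dots,a_{i-1}$, the single condition $[a,\delta_i]=I$ becomes exactly $[a,a_i]=I$. (The base case is $\delta_1=\lsup{ab}{a}=a\,a_1\,a^{-1}$, so $[a,\delta_1]=I\iff[a,a_1]=I$.) Hence the full family of conditions is precisely $[a,a_k]=I$ for all $k\ge 1$; conjugating by powers of $b$ promotes these to $[a,a_k]=I$ for all $k\in\ZZ$, i.e.~\eqref{eq:commut-relns-G}. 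This gives necessity. For the converse, given $a,b$ satisfying~\eqref{eq:commut-relns-G} (so that all the $a_m$ commute pairwise) and arbitrary $c$, reversing the computation shows every $\Delta^{\!i}T$ has degree at most $1$ with leading coefficient $a_0\cdots a_{i-1}$, so the recursion-defined $T$ has degree at most $2$ and realizes $\Delta^{\!1}\Delta^{\!1}T_0=a$, $\Delta^{\!1}T_0=b$, $T_0=c$.

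The main obstacle is the inductive collapse in the last paragraph: each raw condition $[a,\delta_i]=I$ involves $a$ together with several conjugates $a_1,\dots,a_i$, and only after invoking the previously established relations does it reduce to the single new relation $[a,a_i]=I$. I expect the bookkeeping of pushing powers of $a$ through products of conjugates (and of confirming that the matching-powers-of-$k$ step really forces commutativity rather than merely the $k=1$ identity) to be the delicate part; keeping everything in terms of the conjugates $a_m=\lsup{b^{m}}{a}$ should keep it manageable.
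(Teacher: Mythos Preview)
Your argument is correct and complete, but it proceeds quite differently from the paper's proof. You work entirely with the step-$1$ recursion: having reduced ``degree $\le 2$'' to ``$\Delta^{\!i}T$ has degree $\le 1$ for all~$i$'' and thence to ``$\Delta^{\!1}\Delta^{\!i}T$ is constant for all~$i$'', you induct on~$i$ via the cocycle $\Delta^{\!i+1}T=(\lsub{i}{\Delta^{\!1}T})\cdot\Delta^{\!i}T$, extract the single commutation condition $[a,\delta_i]=I$, and then collapse it to $[a,a_i]=I$ using the already-known relations $[a,a_1]=\cdots=[a,a_{i-1}]=I$ and the explicit formula $\gamma_i=a_0a_1\cdots a_{i-1}$. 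This is a clean, elementary computation that handles necessity and sufficiency simultaneously and never leaves the language of the conjugates $a_m=\lsup{b^m}{a}$.

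The paper instead treats the two directions separately and more structurally. For necessity it uses the cocycle identity for \emph{second} differences, $\delta(j,k+l)=\lsub{l}\delta(j,k)\cdot\lsup{\lsub{l}\delta(k)}\delta(j,l)$, exploits the constancy of all $\delta(\cdot,\cdot)$ to introduce an extra free parameter, and expands $\delta(i,j+k+l)$ in two ways to obtain a commutator identity from which~\eqref{eq:commut-relns-G} is read off. For sufficiency it proves an auxiliary lemma (Lemma~\ref{lem:conjug-A-const}) showing that in the subgroup $\GAB\subset\GZ$ generated by $A=\delta(1,1)$ and $B=\delta(1)$, the conjugates $\cc{R}=\lsup{R}{A}$ depend only on the ``naive $B$-degree'' of~$R$ and commute pairwise; it then shows inductively that every $\delta(k,l)$ lies in the abelian group of constants these conjugates generate. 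Your $a_m$ are exactly the evaluations $\cc{B^m}_0$, so the two arguments are cousins; but your route avoids the auxiliary lemma and the second-difference cocycle entirely, at the cost of the bookkeeping you correctly flagged (pushing powers of $a$ past $a_1,\dots,a_{i-1}$), while the paper's approach yields the more conceptual picture of $\GZD$ as a quotient of~$\ZwZ$ used in Proposition~\ref{prop:quad-Leib-wreath}.
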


  Note that the commutation relations~\eqref{eq:commut-relns-G} do not involve~$c$ at all.

\subsubsection{Proof of Proposition~\ref{prop:Leibman-quadratic}}
\label{sec:proof-Leibman-quadratic}

Free variables $i,j,k,l,m,n,p,x,y,z$ will denote elements of~$\ZZ$ throughout.
The iterated discrete differences $\Delta^{\!i}T, \Delta^{\!i}\Delta^{\!j}T, \Delta^{\!i}\Delta^{\!j}\Delta^{\!k}T$ of~$T$ will be denoted $\delta(i),\delta(i,j),\delta(i,j,k)$, respectively.
  
  First, let $T$ be a quadratic Leibman sequence;
  thus, $\delta(i,j,k) = \Bone$ for all $i,j,k$, by definition of Leibman degree.
  It follows that $\lsub{i}\delta(j,k) = \delta(j,k)$, i.e., $\delta(j,k)$ is constant.

  Denote by $\lsup{S}\!R$ the conjugate $SRS^{*}$ of $R$ by~$S$ ($R,S\in\GZ$).
  Straightforward algebra shows that the ``cocycle identity''
  \begin{equation}\label{eq:cocycle-gen}
    \delta(j,k+l) = \lsub{l}\delta(j,k)\cdot\lsup{\lsub{l}\delta(k)}\delta(j,l)
  \end{equation}
  holds for arbitrary $T\in\GZ$.
  Under the assumption that~$T$ is quadratic, all terms $\delta(\cdot,\cdot)$ in the identity above are constant, so the cocycle identity improves itself to one with an extra free parameter~$m$:
  \begin{equation}
    \label{eq:cocycle-quad}
    \delta(j,k+l) = \delta(j,k)\cdot\lsup{\lsub{m}\delta(k)}\delta(j,l).
  \end{equation}
  Let $\llb R,S\rrb = R^{*}S^{*}RS$ be the commutator of~$R,S$.
  Using the cocycle identity~\eqref{eq:cocycle-quad} to expand $\delta(i,j+k+l)$ in two different ways, we find $\llb \delta(i,j), \lsub{m}\delta(k+l)^{*}\cdot\lsub{n}\delta(k)\cdot\lsub{p}\delta(l)\rrb = \Bone$.
  Using the relation $\delta(x)^{*} = \lsub{x}\delta(-x)$, this identity may be rewritten
  \begin{equation}\label{eq:delta2-commutator}
  \llb
  \delta(i,j), \lsub{k}\delta(x)\cdot\lsub{l}\delta(y)\cdot\lsub{m}\delta(z)
  \rrb
  = \Bone\qquad\text{whenever $x+y+z=0$.}
\end{equation}
Let $H$ be the subgroup of~$\GZ$ generated by all elements $\lsub{k}\delta(x)$, and $K$ the subgroup of~$H$ generated by all elements~$\lsub{k}\delta(x)\cdot\lsub{l}\delta(y)\cdot\lsub{m}\delta(z)$ with $x+y+z=0$.
It follows from the commutation relations~\eqref{eq:delta2-commutator} that $K$ is a subgroup of the centralizer of~$\delta(i,j)$ in~$\GZ$.
It is easy to see that $K$ is a normal subgroup of~$H$, and  $x\mapsto\delta(x)\pmod{K}$ is a homomorphism $\ZZ\to H/K$.
Note that $\lsup{\delta(l)^k}\delta(m,n) = \delta(l)^k\cdot\lsub{m}\delta(n)\cdot\delta(n)^{-1}\cdot\delta(l)^{-k} \equiv \delta(kl)\cdot\delta(n)\cdot\delta(-n)\cdot\delta(-kl) \equiv \Bone\pmod{K}$, so the following commutation identity follows:
\begin{equation}\label{eq:commut-relns-GZ}
\delta(i,j) \cdot \lsup{\delta(l)^k}\delta(m,n) = 
\lsup{\delta(l)^k}\delta(m,n) \cdot  \delta(i,j).
\end{equation}
Putting $i=j=l=m=n=1$ (with $k$ arbitrary) and evaluating at~$0$, we obtain the commutation relations~\eqref{eq:commut-relns-G}.

Conversely, let $a,b,c$ satisfy the commutation relations~\eqref{eq:commut-relns-G}.
We claim that a unique $T\in\GZ$ exists satisfying 
\begin{align}  \label{eq:delta11-T}
  T_0 &= c, & \delta(1)_0 &= b, & 
        \delta(1,1) &= a\Bone\qquad\text{(i.e., $\delta(1,1)_k = a$ for all $k\in\ZZ$)}.
\end{align}
Let $T_0=c$ and $T_1=bc$, so the first two conditions above hold.
For $k\ge 0$, the condition $T_{k+2}T_{k+1}^{*}T_kT_{k+1}^{*} = \delta(1,1)_k=a$ is equivalent to the forward recurrence $T_{k+2} = aT_{k+1}T_k^{*}T_{k+1}$, while for $k\le 0$, it is equivalent to the backward recurrence $T_k = T_{k+1}T_{k+2}^{*}aT_{k+1}$.
Using both recurrences with the initial values $T_0=c$, $T_1=bc$, we obtain a unique $T\in\GZ$ satisfying the conditions~\eqref{eq:delta11-T}.

To prove that $T$ is indeed a quadratic Leibman sequence, it remains to show that $\delta(i,j)$ is constant for all~$i,j$.
This is done inductively, starting from~\eqref{eq:delta11-T}, which implies that $\delta(1,1)$ is constant.
The details follow.

\begin{lemma}\label{lem:conjug-A-const}
Let $A = \delta(1,1) = a\Bone$ and $B = \delta(1)$.
For $R\in\GZ$, let the \emph{naive degree} $\deg R$ of~$R$ be the sum of the exponents of all occurrences of~$B$ when $R$ is written as a word in the alphabet $A^1,A^{-1},B^1$,~$B^{-1}$.
Then naive degree is invariant under translations:
$\deg R = \deg(\lsub{j}\!R)$ for all $j\in\ZZ$.
Define
\begin{equation*}
  \cc{R} := \lsup{R}\!A.
\end{equation*}
Let $\GAB$ be the subgroup of~$\GZ$ generated by~$A$ and~$B$.
The elements~$\cc{R}$ for $R\in\GAB$ are constant and commute with each other pairwise.
Moreover, $\cc{R}$ depends only on~$\deg R$;
in fact, $\cc{R} = \cc{B^{\deg R}}$.%
\footnote{The naive degree need not be well defined as an integer, but it is well defined as an integer modulo~$N$ if $N$ is the least positive integer (if any) such that $I$ has an expression as a word of naive degree~$N$ (in addition to its expression as the empty word of naive degree~$0$).
  Thus, naive degree induces a well-defined notion of degree (modulo~$N$) with respect to which the identity $\cc{R} = \cc{B^{\deg R}}$ holds.}
\end{lemma}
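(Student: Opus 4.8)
The plan is to reduce the whole statement to a single observation: for every index $j$, conjugation by $A_j$ and by $B_j$ acts on one fixed abelian subgroup $L\le G$ in a way that does \emph{not} depend on $j$ --- the generator $A$ acting trivially and $B$ shifting a distinguished generating set of $L$ by one step. Granting this, the constancy of $\cc{R}$, the pairwise commutativity, and the degree formula all come out simultaneously.

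First I would determine the sequence $B=\delta(1)$ explicitly. The condition $\delta(1,1)=a\Bone$ from~\eqref{eq:delta11-T} reads $B_{j+1}B_j^{*}=a$ for all $j$, i.e. $B_{j+1}=aB_j$; combined with $B_0=b$ this forces $B_j=a^jb$ for every $j\in\ZZ$. A direct consequence is that $\lsub{j}B=A^jB$ (while $\lsub{j}A=A$, since $A$ is constant), so translation by $j$ acts on $\GAB$ as the automorphism fixing $A$ and sending $B\mapsto A^jB$. Replacing each letter $B^{\pm1}$ in a word by $(A^jB)^{\pm1}$ leaves the total $B$-exponent unchanged, because the inserted factors $A^{\pm j}$ carry $B$-exponent zero; hence naive degree is translation-invariant, which is the first assertion.

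Next I would expose the abelian structure. Writing $a_k:=\lsup{b^k}a=b^kab^{-k}$, the hypothesis~\eqref{eq:commut-relns-G} says exactly that $a_0=a$ commutes with every $a_k$; conjugating that relation by $b^m$ gives that $a_m$ commutes with $a_{m+k}$ for all $m,k$, so $L:=\langle a_k:k\in\ZZ\rangle$ is abelian. The crucial computation is the conjugation action of $A_j$ and $B_j$ on $L$. Since $a_0\in L$ and $L$ is abelian, conjugation by $A_j=a$ fixes $L$ pointwise; and using $B_j=a^jb$ together with $b\,a_k\,b^{*}=a_{k+1}$ and the fact that $a^j$ commutes with $a_{k+1}$, conjugation by $B_j$ sends $a_k\mapsto a_{k+1}$ \emph{with no dependence on $j$}. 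Therefore, for any word $R$ in $A^{\pm1},B^{\pm1}$, conjugation by $R_j$ restricted to $L$ is the index-shift by $\deg R$, again independent of $j$.

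From here the remaining claims are immediate: evaluating this shift at $a_0$ gives $\cc{R}_j=R_jaR_j^{*}=a_{\deg R}$ for every $j$, so $\cc{R}$ is the constant sequence with value $a_{\deg R}\in L$; in particular $\cc{B^n}=a_n$, whence $\cc{R}=\cc{B^{\deg R}}$, and $\cc{R},\cc{S}$ commute because they lie in the abelian group $L$. The only delicate point --- and the reason for the footnote --- is that $\deg R$ is a priori defined only relative to a chosen word for $R$. I would handle this by noting that $\cc{R}_j=R_jaR_j^{*}$ is intrinsic to the element $R$, so the value $a_{\deg R}$ cannot depend on the word; equivalently, the $B$-exponents of the words representing a fixed element form a coset of a subgroup $N\ZZ\le\ZZ$, and from $a_N=a_0$ (forced by $\cc{\Bone}=a_0$ together with the value $a_N$ read off a word of $B$-exponent $N$ representing $\Bone$) one gets $a_{d+N}=b^d a_N b^{-d}=a_d$, so both $\deg R$ and $a_{\deg R}$ are well-defined modulo $N$. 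I expect this bookkeeping to be the only genuinely fiddly part; the algebraic heart is the $j$-independence of the conjugation action, which the relation~\eqref{eq:commut-relns-G} is precisely engineered to provide.
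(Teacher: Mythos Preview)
Your argument is correct, and in fact somewhat cleaner than the paper's. The paper follows the same opening moves---computing $B_j=a^jb$, hence $\lsub{j}B=A^jB$, whence translation invariance of naive degree---but then proceeds differently: it reduces to the evaluation at~$0$ (defining $\cc{g}=\lsup{g}a$ for $g\in\Gab$) and establishes the two assertions ``$\cc{g}=\cc{b^{\deg g}}$'' and ``$\cc{g}$ commutes with~$a$'' by induction on the number of $b^{\pm1}$-letters in a word for~$g$, peeling off one letter at a time and invoking~\eqref{eq:commut-relns-G} only at the very end to see that $\cc{b^k}$ commutes with~$a$. You instead go straight to the structural content: conjugating~\eqref{eq:commut-relns-G} by powers of~$b$ shows at once that $L=\langle a_k:k\in\ZZ\rangle$ is abelian, and then the single observation that conjugation by $A_j$ and $B_j$ act on~$L$ as the identity and as the index-shift $a_k\mapsto a_{k+1}$ (both independently of~$j$) delivers all the conclusions simultaneously. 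Your route avoids the induction entirely and makes transparent why only the $B$-exponent matters; the paper's route is a bit more hands-on but requires no preliminary identification of~$L$. Your handling of the well-definedness issue in the footnote is also fine and essentially matches the paper's intent.
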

\begin{proof}
  Let $\cW = \cW(X,Y)$ denote a word in four formal symbols $X,X^{*},Y,Y^{*}$.
  If $P,Q,R\in\GZ$ are such that the word $\cW$ evaluates to~$R$ (using the group operation of~$\GZ$) under the substitutions $X=P$, $X^{*}=P^{-1}$, $Y=Q$, $Y^{*}=Q^{-1}$, we write $R = \cW(P,Q)$.
  For an arbitrary such word~$\cW$, let $R = \cW(A,B)$.
  The equalities $A = \lsub{j}A$ and $\lsub{j}B = A^jB$ imply
  \begin{equation}\label{eq:transl-by-A}
    \lsub{j}\!{R} = \cW(\lsub{j}\!A, \lsub{j}\!B)
    = \cW(A, A^jB) = \cW'(A,B)
  \end{equation}
  for a new word%
\footnote{To be precise, $\cW(A,A^jB)$ is a word $\cW'(A,B)$ where $\cW'(X,Y)$ is obtained from~$\cW(X,Y)$ performing the substitutions $Y=X^jY$, $Y^{*} = Y^{*}X^{-j}$, where powers $X^k$, $X^{-k}$ (for $k\ge 0$) are interpreted as the $k$-words $X\dots X$ and $X^{*}\dots X^{*}$, respectively.}
  $\cW' = \cW'(X,Y)$ using exactly as many of each of the symbols $Y$, $Y^{*}$ as~$\cW$ (but possibly more of~$X$, $X^{*}$).
  It follows that naive degree is invariant under translations.

  We have $\lsub{j}\!\cc{\!R} =  \lsub{j}(RAR^{*}) = \lsub{j}R\cdot\lsub{j}{A}\cdot\lsub{j}{R^{*}} =  \cc{\lsub{j}\!R}$ since $\lsub{j}A = A$.
  Because of the translation invariance of naive degree, once the identity $\cc{R} = \cc{B^{\deg R}}$ is proved, it shall follow that $\cc{R}$ is constant, since $\lsub{j}{\cc{R}} = \cc{\lsub{j}{R}} = \cc{B^{\deg(\lsub{j}R)}} = \cc{B^{\deg R}} = \cc{R}$.

  By identity~\eqref{eq:transl-by-A}, proving $\cc{R}\cc{S} = \cc{S}\cc{R}$ for all $R,S\in\GZ$ reduces to showing $\cc{R}_0\cdot\cc{S}_0 = \cc{S}_0\cdot\cc{R}_0$.
Abusing notation, define $\cc{g} = \lsup{g}{a} = gag^{*}$ for $g\in G$. 
  The remainder of the proof thus reduces to proving
 \begin{enumerate}
 \item $\cc{g}$ depends only on the naive degree $\deg g$ of $g\in G$---defined as the sum of the exponents of~$b$ in an expression $g = \cW(a,b)$ of $g$ as a word~$\cW$ on $a^1, a^{-1}, b^1$, and $b^{-1}$---in fact, $\cc{g} = \cc{b^n}$ where $n = \deg g$,\quad and
 \item the elements $\cc{g} = \lsup{g}\!a$ for $g$ in the subgroup $\Gab$ generated by~$a$ and~$b$ commute pairwise.
 \end{enumerate}
 Since $\cc{g}$ commutes with $\cc{h}$ iff $\cc{g^{*}h}$ commutes with $\cc{I} = a$, property~(2) follows from
 \begin{enumerate}
 \item[(2')] $\cc{g}$ commutes with~$a$ if $g\in\Gab$.
\end{enumerate}
Note that $g\in\Gab$ satisfies properties (1) and~(2') iff either one of $g^{*},ag, a^{*}g$ does.
  
  For $m\ge 0$, let $\Gab_m$ be the set of elements of~$\Gab$ that are words $\cW(a,b)$ using no more than $m$ symbols~$b,b^{*}$. 
  By induction on~$m$, we prove assertions (1) and~(2') for $g\in\Gab_m$.
  (This will prove the assertions for all $g\in\Gab = \bigcup_m\Gab_m$.)
  First, $\Gab_0$ consists of powers~$a^k$ having naive degree zero.
  Since $a$ commutes with~$a^k$, it follows that $\cc{a^k} = \lsup{a^k}{a} = a$; thus, assertions (1) and~(2') hold for $m=0$.
  Next, assume both assertions hold for some fixed~$m\ge 0$.
  Let $g\in\Gab_{m+1}$ be arbitrary.
  Without loss of generality (possibly multiplying $g$ by powers of $a$ or $a^*$ on the left) we may assume that $g = b^{\pm1}h$ with $h\in\Gab_m$.
  If $g=b^{\pm1}h$, then $\deg g = \deg h \pm 1$, and it follows from the inductive hypothesis that $\cc{g} = \cc{b^{\pm1}h} = \lsup{b^{\pm1}}\!{\cc{h}} = \lsup{b^{\pm1}}\!{\cc{b^{\deg h}}} = \cc{b^{\pm1}b^{\deg{h}}} = \cc{b^{\deg g}}$.
  Note that $\cc{b^k} = \lsup{b^k}\!a$ commutes with~$a$ by the hypothesis of Proposition~\ref{prop:Leibman-quadratic};
  hence, so does~$\cc{g}$.
  This shows that assertions (1) and~(2') hold for $m+1$, completing the proof of Lemma~\ref{lem:conjug-A-const}.
  \end{proof}

Continuing the proof of Proposition~\ref{prop:Leibman-quadratic},
let $A = \delta(1,1)$ and $B = \delta(1)$ as above, and let $\ccAB$ be the subgroup of~$\GAB$ generated by elements $\cc{R}$ with $R\in\GAB$.
By Lemma~\ref{lem:conjug-A-const}, $\ccAB$ is an abelian group of constants.
By induction, one shows first that for all~$k,l$ we have $\lsub{k}\delta(l)\in\GAB$, and subsequently that $\delta(k,l)\in\ccAB$ (using Lemma~\ref{lem:conjug-A-const} and the cocycle identity~\eqref{eq:cocycle-gen} to induct on~$l$, then the identity $\delta(i+j,l) = \lsub{j}\delta(i,l)\delta(j,l)$ to induct on~$k$, plus simple manipulations to extend to negative $k,l$).
Thus, $\delta(k,l)$ is constant for all $k,l$.
This implies that $\Bone = \delta(j,k,l) = \Delta^{\!j}\Delta^{\!k}\Delta^{\!l}T$ for all $j,k,l$, showing that $T$ is a quadratic Leibman sequence and concluding the proof of Proposition~\ref{prop:Leibman-quadratic}.

\subsubsection{Some consequences of Proposition~\ref{prop:Leibman-quadratic}}
\label{sec:cor-Leib-quad}

  First, we give some definitions.
  Given a sequence $(x_i:i\in\ZZ)$ in any multiplicative group~$\GG$, there is a natural notion of product $\prod_{i=k}^lx_i$ of the terms $x_i$ ``as $i$ ranges from $k$ to~$l$'';
  it is characterized by the properties
  \begin{enumerate}
  \item $\prod_{i=k}^{k-1}x_i = 1_{\GG}$\quad (the identity of~$\GG$),\quad and
  \item $\prod_{i=k}^{l+1}x_i = \left( \prod_{i=k}^l x_i\right)\cdot x_{l+1}$
  \end{enumerate}
  for all $k,l\in\ZZ$.

  Informally, terms $x_i$ are multiplied left-to-right in succession.
  For fixed~$k$, one obtains the familiar definitions
  \begin{align*}
    \prod_{i=k}^kx_i &= x_k, &
                           \prod_{i=k}^{k+1}x_i &= x_kx_{k+1}, &
    &\dots, &
    &\prod_{i=k}^{k+n}x_i &= x_kx_{k+1}\dots x_{k+n},&
                                                   &\dots,
  \end{align*}
  but also the less familiar
  \begin{align*}
    \prod_{i=k}^{k-2}x_i &= x_{k-1}^{-1}, &
    \prod_{i=k}^{k-3}x_i &= x_{k-1}^{-1}x_{k-2}^{-1}, 
    &\dots,&
             \prod_{i=k}^{k-n-1}x_i &= x_{k-1}^{-1}x_{k-2}^{-1}\dots x_{k-n}^{-1},&
  &\dots.
  \end{align*}
  There is a corresponding notion of product $\oprod$ evaluated in the opposite group~$\GG^{\mathrm{op}}$ of~$\GG$:
  iterated products are computed right-to-left instead, namely
  \begin{enumerate}
  \item $\oprod_{i=k}^{k-1}x_i = 1_{\GG}$,\quad and
  \item $\oprod_{i=k}^{l+1}x_i = x_{l+1}\cdot\left( \oprod_{i=k}^l x_i\right)$
  \end{enumerate}
  for all $k,l\in\ZZ$.%
\footnote{One may alternatively define $\oprod_{i=k}^l x_i := \bigl( \prod_{i=k}^l x_i^{-1} \bigr)^{-1}$.}
  
\begin{proposition}\label{prop:quad-Leib-fmla}
  If $a,b,c$ are elements of a group~$G$ satisfying the commutation relations~\eqref{eq:commut-relns-G}, the unique quadratic Leibman sequence $T$ satisfying $\Delta^{\!1}\Delta^{\!1}T_0 = a$, $\Delta^{\!1}T_0 = b$ and $T_0 = c$ is given by the expression
  \begin{equation}\label{eq:quad-Leib-fmla}
    T_j = \left[\oprod_{i=1}^j(a^{i-1}b)\right]\cdot c
    \qquad\text{for all $j\in\ZZ$.}
  \end{equation}
  In particular, if $a$ and $b$ commute, then $T_j = a^{j\choose 2}b^jc$ where ${j\choose 2} = j(j-1)/2$ for all~$j\in\ZZ$.
\end{proposition}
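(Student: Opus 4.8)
The plan is to compare the sequence $S$ defined by the claimed right-hand side against the Leibman sequence $T$ furnished by Proposition~\ref{prop:Leibman-quadratic}, by verifying that $S$ has the same first discrete difference as $T$ and agrees with it at $j=0$; two sequences with these properties necessarily coincide. Note that the commutation relations~\eqref{eq:commut-relns-G} enter only to guarantee, through Proposition~\ref{prop:Leibman-quadratic}, that $T$ exists at all; the matching of the two formulas is then purely formal. I would first identify $\Delta^{\!1}T$. Since $T$ has Leibman degree at most~$2$, its first difference $\Delta^{\!1}T$ is a Leibman sequence of degree at most~$1$ (the third differences $\Delta^{i_2}\Delta^{i_1}\Delta^{\!1}T$ all vanish). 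Its defining data are $(\Delta^{\!1}T)_0 = \Delta^{\!1}T_0 = b$ and $\Delta^{\!1}(\Delta^{\!1}T)_0 = \Delta^{\!1}\Delta^{\!1}T_0 = a$, so Proposition~\ref{prop:Leibman-linear} gives $(\Delta^{\!1}T)_j = a^j b$ for every $j\in\ZZ$.

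Next I would compute the first difference of $S$. Writing $P_j = \oprod_{i=1}^j(a^{i-1}b)$ so that $S_j = P_j\,c$, the characterizing property~(2) of the opposite product yields $P_{j+1} = (a^{j}b)\,P_j$ for all $j\in\ZZ$, whence $\Delta^{\!1}S_j = S_{j+1}S_j^{*} = P_{j+1}\,c\,c^{*}\,P_j^{*} = P_{j+1}P_j^{*} = a^{j}b = (\Delta^{\!1}T)_j$. Since also $S_0 = P_0\,c = c = T_0$, the element $T_j^{*}S_j$ is independent of~$j$ (it is preserved under $j\mapsto j+1$ precisely because $\Delta^{\!1}S = \Delta^{\!1}T$, as $S_{j+1}=T_{j+1}T_j^{*}S_j$ forces $T_{j+1}^{*}S_{j+1}=T_j^{*}S_j$) and equals $c^{*}c = I$ at $j=0$; therefore $S_j = T_j$ for all $j$, which is exactly~\eqref{eq:quad-Leib-fmla}.

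For the special case $ab=ba$, the factors $a^{i-1}b$ commute pairwise, so the opposite product reduces to the ordinary one and $\oprod_{i=1}^j(a^{i-1}b) = a^{\sum_{i=1}^{j}(i-1)}\,b^{\,j} = a^{j\choose 2}b^{\,j}$, using $\sum_{i=1}^{j}(i-1) = {j\choose 2}$; right-multiplying by~$c$ gives $T_j = a^{j\choose 2}b^{j}c$. The one place demanding care—and the main, if modest, obstacle—is the bookkeeping of the opposite-product orientation together with its negative-index convention, so that the recurrence $P_{j+1} = (a^{j}b)P_j$, and hence the backward telescoping it encodes, remains valid for all $j\in\ZZ$ rather than only for $j\ge 0$.
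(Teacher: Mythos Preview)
Your proof is correct and follows essentially the same route as the paper's: both arguments reduce to the identification $(\Delta^{\!1}T)_j = a^jb$ and then telescope, with you invoking Proposition~\ref{prop:Leibman-linear} directly while the paper uses the sequence-level identities $\delta(j+1) = \lsub{j}\delta(1)\cdot\delta(j)$ and $\lsub{j}\delta(1) = A^jB$ established in the proof of Proposition~\ref{prop:Leibman-quadratic}. Your verification-by-comparison (define $S$, check $\Delta^{\!1}S = \Delta^{\!1}T$ and $S_0 = T_0$) is a clean repackaging of the paper's direct derivation of $\delta(j)_0$.
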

\begin{proof}
  From the identities $\delta(j+1) = \lsub{j}\delta(1)\cdot\delta(j)$ and  $\lsub{j}\delta(1) = \delta(j,1)\delta(1) = \delta(1,1)^j\cdot\delta(1) = A^jB$, we obtain $\lsub{j}T\cdot T^{*} = \delta(j) = \oprod_{i=1}^j(A^{i-1}B)$.
  Thus, $\lsub{j}T = \oprod_{i=1}^j(A^{i-1}B)\cdot T$.
  Equation~\eqref{eq:quad-Leib-fmla} follows evaluating the latter identity at~$0$.

  If $a$ and $b$ commute, then $T_j = \prod_{i=1}^ja^{i-1}\cdot\prod_{i=1}^jb\cdot c = a^{\sum_{i=1}^j(i-1)}\cdot b^j\cdot c$, where $\sum_{i=1}^j(i-1) = j(j-1)/2 = {j\choose 2}$.
  (For $j\le 0$, the sum $\sum_{i=1}^j$ is understood in the obvious sense analogous to the definition of~$\prod_{i=1}^j$ above.)
\end{proof}

\begin{definition}\label{def:lamplighter-group}
  The restricted wreath product~$\ZwZ$ of~$\ZZ$ with itself is called the \emph{lamplighter group}.
It is realized as a group on generators $(\alpha_k:k\in\ZZ)$ and~$\beta$ subject to the relations
\begin{align}
  \alpha_k\alpha_l &= \alpha_l\alpha_k\label{eq:wreath-commute}\\
  \beta^k\alpha_l &= \alpha_{k+l}\beta^k\label{eq:wreath-shift}\qquad
                    \text{for all $k,l\in\ZZ$.}
\end{align}
\end{definition}

  The subgroups $H = \langle \beta\rangle$ and $K = \langle \alpha_k:k\in\ZZ\rangle$ of~$\ZwZ$ are abelian.
   Together, they generate~$\ZwZ$, and it is easy to show that each is its own centralizer in~$\ZwZ$;
  therefore, $\ZwZ$ has trivial center.
  In particular, $\ZwZ$ is not nilpotent.
  It is, however, solvable, being a semidirect product of the two abelian groups~$H$,~$K$.

\begin{proposition}\label{prop:quad-Leib-wreath}
  Given a quadratic Leibman sequence $T$ in a group~$G$, its discrete differences  $\Delta^{\!i}T, \Delta^{\!i}\Delta^{\!j}T$ ($i,j\in\ZZ$) generate a subgroup $\GZD$ of~$\GZ$ isomorphic to a factor of~$\ZwZ$.
  Actually, the group $\GZD$ is already generated by~$A = \Delta^{\!1}\!\Delta^{\!1}T$ and $B = \Delta^{\!1}T$.
  The values $\Delta^{\!i}T_m, \Delta^{\!i}\Delta^{\!j}T_m$ ($i,j,m\in\ZZ$) of these discrete differences generate a subgroup $\GD$ of~$G$ also isomorphic to a factor of~$\ZwZ$.
  In fact, $a = A_0$ and $b = B_0$ already generate~$\GD$.
Furthermore, there exists a quadratic Leibman sequence such that both $\GZD$ and $\GD$ are isomorphic to~$\ZwZ$ itself.
\end{proposition}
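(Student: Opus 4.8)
The plan is to realize $\ZwZ$ through the two generators $A=\Delta^{\!1}\Delta^{\!1}T$ and $B=\Delta^{\!1}T$ of $\GZD$, matching the cyclic generator $\beta$ of $\ZwZ$ with $B$ and the commuting ``lamps'' $\alpha_k$ with the conjugates $\cc{B^k}=B^kAB^{-k}$. The first thing I would record is that $\GZD$ is in fact already generated by $A$ and $B$, i.e.\ $\GZD=\GAB$. This is immediate from the bookkeeping inside the proof of Proposition~\ref{prop:Leibman-quadratic}: it is shown there that every $\lsub{k}\delta(l)$ (hence every $\delta(l)=\Delta^{\!l}T$) lies in $\GAB$, and that every $\delta(k,l)=\Delta^{\!k}\Delta^{\!l}T$ lies in $\ccAB\subseteq\GAB$; conversely $A=\delta(1,1)$ and $B=\delta(1)$ are among the generators of $\GZD$, so the two groups coincide.

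Next I would construct the surjection $\ZwZ\twoheadrightarrow\GZD$. Since $\ZwZ$ is generated by $\alpha_0$ and $\beta$ (with $\alpha_k=\beta^k\alpha_0\beta^{-k}$) subject only to~\eqref{eq:wreath-commute} and~\eqref{eq:wreath-shift}, it suffices to set $\alpha_0\mapsto A$, $\beta\mapsto B$ and verify the two relation families for the images $\cc{B^k}=B^kAB^{-k}$. The commuting relations $\alpha_k\alpha_l=\alpha_l\alpha_k$ become $\cc{B^k}\cc{B^l}=\cc{B^l}\cc{B^k}$, which is precisely the pairwise commutativity furnished by Lemma~\ref{lem:conjug-A-const}; the shift relations $\beta^k\alpha_l=\alpha_{k+l}\beta^k$ become the one-line conjugation identity $B^k\cc{B^l}=B^{k+l}AB^{-l}=\cc{B^{k+l}}B^k$. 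The image contains $A,B$, so the map is onto and $\GZD$ is a factor of $\ZwZ$. To handle $\GD$, I would use the evaluation-at-$0$ map $\ev_0:\GZ\to G$, $R\mapsto R_0$ (a homomorphism because multiplication in $\GZ$ is pointwise): from $\lsub{m}B=A^mB$ (established in the proof of Proposition~\ref{prop:quad-Leib-fmla}) one gets $B_m=a^mb$, so every value $\delta(i)_m,\delta(i,j)_m$ lies in $\Gab$, whence $\GD=\Gab$. Composing $\ev_0$ with the previous surjection, or building the analogous map $\beta\mapsto b$, $\alpha_k\mapsto\lsup{b^k}\!a$ directly and checking the same relations, exhibits $\GD$ as a factor of $\ZwZ$ as well.

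For the final assertion I would take $G=\ZwZ$ itself with $a=\alpha_0$, $b=\beta$ and $c=I$. Because $\lsup{b^k}\!a=\beta^k\alpha_0\beta^{-k}=\alpha_k$ by~\eqref{eq:wreath-shift}, the commutation relations~\eqref{eq:commut-relns-G} collapse to $\alpha_0\alpha_k=\alpha_k\alpha_0$, which hold by~\eqref{eq:wreath-commute}; Proposition~\ref{prop:Leibman-quadratic} then produces a genuine quadratic Leibman sequence $T$ with these data. For this $T$ the composite $\ZwZ\xrightarrow{\psi}\GZD\xrightarrow{\ev_0}\GD$ sends $\beta\mapsto B_0=\beta$ and $\alpha_k\mapsto(\cc{B^k})_0=b^kab^{-k}=\alpha_k$, so it is literally the identity endomorphism of $\ZwZ$. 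An injective composite forces $\psi$ to be injective; being also surjective, $\psi$ is an isomorphism $\ZwZ\cong\GZD$, and then $\ev_0=\psi^{-1}$ is an isomorphism too, giving $\GD\cong\ZwZ$.

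I expect the main obstacle to be exactly this last injectivity: the first three parts only yield $\GZD$ and $\GD$ as \emph{quotients} of $\ZwZ$, and the entire content of the ``furthermore'' clause is to rule out any proper collapse. The device that makes this painless is factoring the two tautological surjections through $\ev_0$ so that their composite is visibly the identity of $\ZwZ$; this sidesteps having to invoke residual finiteness or Hopficity of $\ZwZ$ to exclude a proper quotient. A secondary, purely administrative point is to keep straight that $\GZD$ (resp.\ $\GD$) is generated by $A,B$ (resp.\ $a,b$) rather than by the full family of discrete differences, but this reduces to the containments already proved for Proposition~\ref{prop:Leibman-quadratic}.
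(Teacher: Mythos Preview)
Your proof is correct and follows essentially the same route as the paper's: identify $\GZD=\GAB$ via the bookkeeping in the proof of Proposition~\ref{prop:Leibman-quadratic}, verify that the conjugates $A_{[k]}=\cc{B^k}$ together with~$B$ satisfy the defining relations~\eqref{eq:wreath-commute}--\eqref{eq:wreath-shift} of~$\ZwZ$, pass to~$\GD$ via evaluation at zero, and for the ``furthermore'' clause take $G=\ZwZ$ with $(a,b,c)=(\alpha_0,\beta,I)$ and invoke Proposition~\ref{prop:Leibman-quadratic}. Your explicit observation that the composite $\ZwZ\to\GZD\to\GD$ is the identity on generators---hence both surjections are isomorphisms---is precisely what the paper's terse ``$\GD\simeq\ZwZ$, and hence $\GZD\simeq\ZwZ$ also'' is implicitly invoking, and making it explicit as you do is a genuine improvement in clarity.
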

\begin{proof}
  The proof of Proposition~\ref{prop:Leibman-quadratic} shows that $\GZD$ is generated by~$A$ and~$B$.
  \emph{A fortiori,} $\GZD$ is generated by $B$ and $A_{[k]} := \lsup{B^k}\!{A}$ for $k\in\ZZ$ (since $A_{[0]} = A$).
The special case $i=j=l=m=n=1$ (with $k$ arbitrary) of equation~\eqref{eq:commut-relns-GZ} gives the commutation relations
\begin{equation*}
  A \cdot A_{[k]} = A_{[k]} \cdot A,
\end{equation*}
whence the following relations are easily proved using induction and the definition of~$A_{[k]}$:
\begin{align*}
  A_{[k]}A_{[l]} &= A_{[l]}A_{[k]}\\
  B^kA_{[l]} &= A_{[k+l]}B^k \qquad\text{for all $k,l\in\ZZ$.}
\end{align*}
In general, further relations between $A$ and~$B$ may hold;
nevertheless, we see that the generators $A_{[k]}$ ($k\in\ZZ$) and~$B$ of $\GZD$ satisfy the defining relations~\eqref{eq:wreath-commute},~\eqref{eq:wreath-shift} of~$\ZwZ$, so $\GZD$ is isomorphic to a factor of~$\ZwZ$.
  Evaluation at zero is a homomorphism~$\GZ\to G$ that restricts to a surjection $\GZD\to\GD$, so $\GD$ is isomorphic to a factor of~$\GZD$, and thus of~$\ZwZ$, generated by $a = A_0$ and $b = B_0$.

  Reciprocally, let $(\alpha_k:k\in\ZZ)$ and~$\beta$ be the canonical generators of~$\ZwZ$, i.e., these elements obey only relations implied by~\eqref{eq:wreath-commute} and~\eqref{eq:wreath-shift}.
  It follows from Proposition~\ref{prop:Leibman-quadratic} that there is a unique quadratic Leibman sequence $T$ in $\ZwZ$ satisfying $\Delta^{\!1}\Delta^{\!1}T_0 = \alpha_0$, $\Delta^{\!1}T_0 = \beta$ and $T_0 = I$.
  For this sequence~$T$ we have $\GD \simeq \ZwZ$, and hence $\GZD\simeq\ZwZ$ also.
\end{proof}

\begin{corollary}\label{cor:quadr-Leibman-nonnil}
There exists a quadratic Leibman sequence~$T$ with $T_0=I$ whose range generates a non-nilpotent group.
\end{corollary}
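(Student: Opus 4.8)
The plan is to read off the corollary directly from Proposition~\ref{prop:quad-Leib-wreath} together with the non-nilpotence of~$\ZwZ$ recorded just before that proposition. First I would invoke the last assertion of Proposition~\ref{prop:quad-Leib-wreath}, which furnishes a quadratic Leibman sequence~$T$ in the group $G = \ZwZ$ satisfying $\Delta^{\!1}\Delta^{\!1}T_0 = \alpha_0$, $\Delta^{\!1}T_0 = \beta$ and $T_0 = I$, and for which the group $\GD$ generated by the values of the discrete differences satisfies $\GD \simeq \ZwZ$. The normalization $T_0 = I$ is precisely the hypothesis demanded in the statement, so that condition comes for free.

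The remaining point is to compare the range of~$T$ with~$\GD$. Here I would observe that the two generators of~$\GD$ singled out in Proposition~\ref{prop:quad-Leib-wreath}, namely $a = A_0 = \Delta^{\!1}\Delta^{\!1}T_0$ and $b = B_0 = \Delta^{\!1}T_0$, are both expressible as words in the values $T_j$ once $T_0 = I$ is used: indeed $b = T_1 T_0^{*} = T_1$ and $a = T_2 T_1^{*}T_0 T_1^{*} = T_2 T_1^{-2}$. Hence $a$ and~$b$ lie in the subgroup of~$G$ generated by the range $\{T_j : j\in\ZZ\}$, and therefore $\GD = \langle a,b\rangle$ is contained in that subgroup. (In fact, by Proposition~\ref{prop:quad-Leib-fmla} each $T_j$ is itself a word in $a$ and~$b$, so the range generates \emph{exactly}~$\GD$; but only the inclusion $\GD \subseteq \langle T_j : j\in\ZZ\rangle$ is actually needed.)

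To conclude, I would appeal to the standard fact that every subgroup of a nilpotent group is nilpotent. Since $\GD\simeq\ZwZ$ is non-nilpotent (the lamplighter group has trivial center, as noted before Proposition~\ref{prop:quad-Leib-wreath}), and $\GD$ sits inside the group generated by the range of~$T$, that larger group cannot be nilpotent either. This completes the proof. There is no genuine obstacle here: the entire mathematical content has already been discharged in Proposition~\ref{prop:quad-Leib-wreath}, and the only task remaining is the elementary bookkeeping showing that the wreath-product generators $a,b$ are recovered from the sequence values under the normalization $T_0 = I$.
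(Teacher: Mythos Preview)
Your proposal is correct and follows essentially the same route as the paper. The paper's proof is a one-liner asserting that the sequence constructed in Proposition~\ref{prop:quad-Leib-wreath} has range generating~$\ZwZ$; you supply the explicit bookkeeping (namely $b = T_1$ and $a = T_2T_1^{-2}$ when $T_0 = I$, together with Proposition~\ref{prop:quad-Leib-fmla} for the reverse inclusion) that the paper leaves implicit.
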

\begin{proof}
 The quadratic Leibman sequence~$T$ constructed in the proof of Proposition~\ref{prop:quad-Leib-wreath} has $T_0=I$, and its range generates~$\ZwZ$, which is not nilpotent.
\end{proof}

\begin{remark}
  Bergelson and Leibman used the lamplighter group to construct counterexamples showing that multiple recurrence and multiple convergence results that hold for ergodic actions generating nilpotent groups do fail for non-nilpotent groups~\cite{BergelsonLeibman2004}.
  In contrast to the case of multiple ergodic averages, all (simple) ergodic convergence results in the present manuscript---including Theorems~\ref{thm:PolyMET-Z} and~\ref{thm:PolyMET}---hold under the sole hypothesis that the family~$\Tb$ is a Leibman sequence, which already in the quadratic setting includes cases in which the range of~$\Tb$ is non-nilpotent, per Corollary~\ref{cor:quadr-Leibman-nonnil} above.
\end{remark}

\subsection{Leibman sequences in PET structures}
\label{sec:Leibman-sequence-PET}

\begin{definition}[Discrete difference and abstract Leibman sequence] \label{def:Leibman-seq-PET}
Let $\cM$ be a PET structure over~$\ZZ$.
The \emph{discrete-difference operator} is the function $\Delta^{\!\bullet}:\LZB \to \LZZB$ uniquely characterized by the identity
\begin{equation*}
  (\Delta^{\!\bullet}T)_{i,j} = T_{i+j}\circ T^*_j \qquad\text{for all $i,j\in\cZ$.}
\end{equation*}
Alternatively, for $i\in\cZ$, the left evaluation at~$i$ of $\Delta^{\!\bullet}T$ is $\Delta^{\!i}T = \lsub{i}T\circ T^{*}$.

Let $\Bone = I(\blacksquare)$ denote the constant family $i\mapsto I$ in~$\LZB$. 
Given $d\in\NN$, a \emph{unitary Leibman sequence of degree at most~$d$} is any $T\in\LZB$ satisfying
\begin{equation*}
  \Delta^{\!i_d}\dots\Delta^{\!i_1}\Delta^{\!i_0}T = \Bone\qquad
\text{for all $i_0,i_1,\dots,i_d\in\cZ$,}
\end{equation*}
that takes values in~$\UH$, i.e., also satisfying $T^{*}\circ T = \Bone = T\circ T^{*}$.
A Leibman sequence is a Leibman sequence of any degree~$d$; its degree $\degL\!T$ is the least such~$d$. 
(We define formally $\degL\!\Bone = -\infty$.)
\end{definition}

\begin{remarks}
  \begin{itemize}
  \item Note that abstract Leibman sequences per Definition~\ref{def:Leibman-seq-PET} are ``internal'', i.e., obtained \emph{from elements of~$\LZB$}---that are otherwise only incidentally regarded as functions $\cZ\to\BB$ via evaluation;
    they may be regarded as taking values in the group
    \begin{equation*}
      G = \UU_{\cH} = \{U\in\BB_{\cH} : U\circ U^{*}=I=U^{*}\circ U\}
    \end{equation*}
    of (internal) unitary transformations of the Hilbert space~$\cH$.
    (For this reason, we sometimes refer to these Leibman sequences as \emph{unitary}.)
Accordingly, the defining property of a Leibman sequence amounts to the requirement that $d+1$ discrete-difference operations, \emph{possibly involving nonstandard elements $j\in\cZ$}, always transform $T$ into~$\Bone$, i.e., the constant function $i\mapsto I$ \emph{for all $i\in\cZ$,} not merely for all $i\in\ZZ$.
    The proofs of Theorems~\ref{thm:PolyMET-Z} and~\ref{thm:MetaPolyMET-Z} below crucially depend on the richer structure of~$\cZ$ in saturated PET structures---even if ultimately the results are valid in \emph{all} PET structures, including classical ones whose Leibman sequences are \emph{bona fide} functions $\ZZ\to\UH\subset\BB$.
  \item We have $\Delta^{\!\bullet}T = \lsub{\bullet}T\circ (T^*_{\blacksquare,\cdot})$; 
hence, discrete differentiation is obtained from the $\ZZ$-action on $\LZB$, the right inclusion $\LZB\hookrightarrow\LZZB$, plus the pointwise operations of composition and taking adjoint;
    thus, $\Delta^{\!\bullet}$ may as well be regarded as a distinguished function of any PET structure~$\cM$.
\item The predicate \emph{``\,$T$ is a unitary Leibman sequence of degree at most~$d$''} is captured by a single Henson formula~$\lambda_d(T)$, namely%
\footnote{Expressions of the type $x=y$ such as those in~\eqref{eq:Leibman} are not Henson formulas \emph{sensu stricti}, but may be regarded as abbreviations of formulas $\dd(x,y)\le 0$ (or $\Nrm{y-x}\le 0$ in Banach sorts).}
  \begin{equation}
    \label{eq:Leibman}
    (T\circ T^{*} = \Bone = T^{*}\circ T)
    \wedge
    \forall i_d\dots\forall i_1\forall i_0
    \bigl[\Delta^{\!i_d}(\dots(\Delta^{\!i_1}(\Delta^{\!i_0}T))\dots) = \Bone\bigr].
  \end{equation}
\item Since any group $G$ is realized as a subgroup of a suitable unitary group~$\UH$,%
\footnote{One may identify $G$ with its faithful homomorphic image under the translation action $G\curvearrowright \sL^2(G)$, which realizes~$G$ as a group of unitary transformations of $\cH = \sL^2(G)$.}
it follows that any classical Leibman sequence is realized as a Leibman sequence in a classical PET structure.
In view of Proposition~\ref{prop:quad-Leib-wreath} and Corollary~\ref{cor:quadr-Leibman-nonnil}, we have instances of pointwise ergodic convergence per Theorem~\ref{thm:PolyMET-Z} in the setting of quadratic Leibman sequences $(T_n)$ of unitary operators generating a non-nilpotent group of unitary transformations.
To our knowledge, this is the first explicit example of pointwise convergence of averages of a non-nilpotent (in fact, not even virtually nilpotent%
\footnote{A group is virtually nilpotent if it has a finite-index nilpotent subgroup.})
family of unitary transformations.
\end{itemize}
\end{remarks}

\section{An ergodic theorem for unitary polynomial actions of~$\ZZ$}
\label{sec:poly-PET}

Throughout the end of this section, $\cL$ will be the Henson language for PET structures over~$\ZZ$.
All structures will be in the class~\aPET\ of abstract PET structures over~\ZZ.

\subsection{The sequence of ergodic averages}
\label{sec:avg-seq}

\begin{convention}
  \label{conv:sort-names}
Henceforth, the standalone symbols $\RR,\ZZ,\NN$ shall denote the usual sets of real, integer and natural numbers. 
If $\cM$ is a PET structure over~$\ZZ$, we shall use interpretation of constants (and the density of~$\QQ$ in~$\RR$) to identify $\RR$ with the sort $\RR^{\cM}$, and also $\ZZ$ and $\NN$ with subsets of $\cZ = \ZZ^{\cM}$ and $\cN = \NN^{\cM}$, respectively.%
\footnote{The identification of~$\NN$ with a subset of~$\ZZ$ is neither necessary nor beneficial.
  Theorem~\ref{thm:PolyMET} below considers ergodic averages relative to \Folner\ nets indexed by any countable directed set $\DD$ (in place of~$\NN$) over an arbitrary abelian group~$\GG$ (in place of~$\ZZ$).}
By an abuse of notation, when the structure~$\cM$ is clear from context, we may omit the superscript and write $\cH$, $\BB$, $\AZ$ $\LZ$, \dots\ to denote the sorts $\cH^{\cM}$, $\BB^{\cM}$, $\AZ^{\cM}$, $(\LZ)^{\cM}$, \dots\ of~$\cM$.
\end{convention}

\begin{definition}[Ergodic averages]
\label{def:ergo-avg}
  Let $\cM$ be a PET structure over~$\ZZ$ and let $T\in\LZB$.  
Via the evaluation $\LZB\times\cZ\to\BB$, one may regard $T$ as a function $i\mapsto T_i$.
For $n\in\cN$, the \emph{$n$-th average of~$T$} is
\begin{equation*}
\AV_nT = \pair{T}{\sigma_n}.
\end{equation*}
The \emph{sequence of averages of~$T$} is $\AVb T = (\AV_nT:n\in\NN)$.

Similarly, for $x\in\cH$, the \emph{$n$-th average of~$x$ under~$T$} is $\AV_n\! T(x)$.
The \emph{sequence of averages of~$x$ under~$T$} is $\AVb\! T(x) = (\AV_n\! T(x):n\in\NN)$.  
\end{definition}

We remark that $\ThPET$ ensures the validity of the identities
\begin{equation*}
  \AV_nT = \frac{1}{n+1} \sum_{0\le i\le n} T_i \qquad\text{and}\qquad
  \AV_nT(x) = \frac{1}{n+1} \sum_{0\le i\le n} T_i(x)
\end{equation*}
for (standard) $n\in\NN$;
however, averages as defined above are non-classical if $n\in\cN\setminus\NN$.
On the other hand, the sequence $(\AV_nT : n \in \NN)$ has classical terms, and the study of its convergence is purely classical \emph{a priori}.

\begin{theorem}[Poly-MET/\ZZ: Mean Ergodic Theorem for unitary polynomial actions of~$\ZZ$]
\label{thm:PolyMET-Z}
Let $\cM$ be a PET structure over~$\ZZ$, and let $T\in(\LZ)^{\cM}$ be a Leibman sequence of unitary operators on the Hilbert space~$\cH = \cH^{\cM}$. 
For every $x\in\cH$, the sequence $\AVb T(x) = (\AV_nT(x) : n\in\NN)$ of averages of~$x$ under~$T$ converges in the norm topology of~$\cH$.
\end{theorem}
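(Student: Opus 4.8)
The plan is to deduce norm convergence of the standard sequence $(\AV_nT(x):n\in\NN)$ from a nonstandard Cauchy criterion obtained inside a saturated elementary extension, and to verify that criterion by PET induction on the Leibman degree $\degL\!T$, with the van der Corput inequality as the inductive engine and the \emph{genuinely} lower-degree discrete difference $\Delta^{\!i}T$ as the reduced object.

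First I would fix $x$ in the unit ball of $\cH$ (the general case follows by scaling) and pass to an $\aleph_1$-saturated elementary extension $\cM^{*}$ of $\cM$. Each $\AV_nT(x)$ lies in the unit ball of $\cH$ (it is a convex average of unit vectors under unitaries), so its internal continuation $\AV_NT(x)=\pair{T}{\sigma_N}(x)\in\cH^{\cM^{*}}$ is defined for every $N\in\cN=\NN^{\cM^{*}}$, where $\sigma$ supplies the nonstandard averaging measures $\sigma_N$. Because $\cH^{\cM}$ is a genuine (complete) real Hilbert space, the standard sequence converges if and only if it is Cauchy, and by overspill this is in turn equivalent to the purely internal assertion
\begin{equation*}
  \AV_NT(x)\approx\AV_{N'}T(x)\quad\text{in the norm of }\cH^{\cM^{*}}\qquad\text{for all infinite }N,N'\in\cN.
\end{equation*}
The interplay of the sorts $\cZ$, $\AZ^{\cM^{*}}$, $(\LZ)^{\cM^{*}}$, together with Loeb measure and the Dominated Convergence Theorem~\ref{thm:DCT}, is what legitimizes the limiting manipulations below; the Uniform Metastability Principle (Proposition~\ref{thm:UMP}) then upgrades the resulting convergence to the uniform metastability rate used in the companion Theorem~\ref{thm:MetaPolyMET-Z}.

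The engine is an induction on $d=\degL\!T$. The base case $d\le 0$ is immediate: a constant Leibman sequence has $T_n=U$ for a single unitary $U$, whence $\AV_NT(x)=U(x)$ for \emph{every} $N$, so the averages are literally constant (cf.\ Proposition~\ref{prop:Leibman-linear} for the degree-$1$ bookkeeping). For the inductive step I would apply the van der Corput inequality in its Hilbert-space form to bound the norm of the relevant average by a double average of correlations, and rewrite those correlations using that $T_{n+h}=(\Delta^{\!h}T)_n\circ T_n$ with $T_n$ unitary:
\begin{equation*}
  \langle T_{n+h}(x),\,T_n(x)\rangle=\langle (\Delta^{\!h}T)_n\,T_n(x),\,T_n(x)\rangle.
\end{equation*}
Since $\Delta^{\!h}T$ is a Leibman sequence of degree at most $d-1$, the inductive hypothesis governs its averages; averaging the resulting estimate over the shift $h$ and letting the shift range grow (a \Folner/Loeb limit that is harmless for infinite $N$) should force $\AV_NT(x)-\AV_{N'}T(x)$ to be infinitesimal, closing the induction.

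The hard part is twofold, and is exactly where the $\PET$ bookkeeping of the earlier sections earns its keep. First, the correlation is most naturally written $\langle T_{n+h}(x),T_n(x)\rangle=\langle x,\,T_{n+h}^{*}\circ T_n(x)\rangle$, which involves the adjoint sequence $T^{*}$; but as the remarks after Definition~\ref{def:Leibman-seq} warn, $T^{*}$ need not be Leibman, so the induction \emph{cannot} be run on $T^{*}$ and must instead be routed through $\Delta^{\!h}T$, the only lower-degree Leibman object at hand. Second, this substitution introduces a vector $T_n(x)$ that moves with $n$, so the inductive hypothesis — which controls averages of a lower-degree sequence applied to a \emph{fixed} vector — does not apply verbatim; the remedy is to run the induction not on a single sequence but on the finite \emph{system} of Leibman sequences generated from $T$ under the operators $\Delta^{\!h}$, ordered by a well-founded complexity measure refining $\degL$ (in the spirit of the naive-degree/word-length counting of Section~\ref{sec:QuadLeib}), so that the strict decrease of this measure under van der Corput guarantees termination without ever requiring $T^{*}$ itself to be Leibman. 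Verifying that the van der Corput error terms are genuinely infinitesimal for infinite $N$ — so that the estimate closes inside $\cM^{*}$ and the inductive bound is uniform in $h$ — is the delicate technical point I expect to absorb most of the work.
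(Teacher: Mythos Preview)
Your setup---passing to a saturated extension and running PET induction on $\degL T$---matches the paper. But your inductive step has a genuine gap. You correctly flag the obstacle: van der Corput produces correlations $\langle(\Delta^{\!h}T)_n\,T_n(x),T_n(x)\rangle$ in which the base vector $T_n(x)$ moves with~$n$, so the inductive hypothesis (which governs averages of a lower-degree sequence applied to a \emph{fixed} vector) does not apply. Your proposed remedy---inducting on a ``system'' of Leibman sequences with a refined complexity measure---is too vague to constitute an argument here, and in any case never produces an expression of the form $\AV_n S(y)$ with $S$ of lower degree and $y$ fixed. There is a second difficulty: van der Corput bounds $\|\AV_N T(x)\|$, not the difference $\AV_N T(x)-\AV_{N'}T(x)$, and a norm bound alone cannot yield convergence, since the limit is generically nonzero.

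The paper organizes the induction differently and sidesteps the moving-vector problem altogether. It splits $\cH$ orthogonally into the closed span of all vectors $(\AV_M T)^{*}(y)$ with $M\in\cN\setminus\NN$ and $y\in\cH$ (the \emph{structured} part) and its orthogonal complement (the \emph{pseudorandom} part). A vector $x$ is pseudorandom exactly when $\AV_M T(x)=0$ for every nonstandard~$M$, so by Lemma~\ref{lem:divergence} the standard averages converge to~$0$. On a structured generator $x=(\AV_M T)^{*}(y)$, the key identity of Lemma~\ref{lem:convol},
\[
  \AV_n T(x)=(\AV_n T)\circ(\AV_M T)^{*}(y)=\int \AV_n(\nabla^{i}T)(y)\,d\sigma_M(i),
\]
exhibits $\AV_n T(x)$ as a $\sigma_M$-integral of averages of the reverse difference $\nabla^{i}T=\lsub{i}\Delta^{[-i]}T$, which has Leibman degree strictly below~$d$ and is applied to the \emph{fixed} vector~$y$. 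The inductive hypothesis then gives convergence of the integrand for each~$i$, and the Dominated Convergence Theorem (Lemma~\ref{thm:DCT-PET}) passes convergence through the integral. This structured/pseudorandom splitting, together with the identity of Lemma~\ref{lem:convol}, is precisely the idea missing from your outline.
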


Theorem~\ref{thm:PolyMET-Z} admits the following uniformly metastable strengthening. 

\begin{theorem}[Metastable Poly-MET/\ZZ]
\label{thm:MetaPolyMET-Z}
Fix~$d\in\NN$. 
There exists a universal metastability rate~$\Eb^d$, depending only on~$d$, that applies uniformly to all sequences $\AVb T(x)$ of averages of arbitrary $x$ in the unit ball of the Hilbert-space sort~$\cH$ under any Leibman sequence $T$ in~$\UH$ of degree at most~$d$ in any PET structure $\cM$ over~$\ZZ$.
\end{theorem}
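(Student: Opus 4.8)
The plan is to derive Theorem~\ref{thm:MetaPolyMET-Z} from Theorem~\ref{thm:PolyMET-Z} by a single application of the Uniform Metastability Principle (Proposition~\ref{thm:UMP}), exploiting the fact that the latter converts \emph{pointwise} convergence holding across every model of a Henson theory into convergence with a rate that is \emph{uniform} over that entire class. First I would fix $d\in\NN$ and expand the language $\cL$ by two fresh constant symbols: one naming an element $x$ of the Hilbert sort~$\cH$, and one naming an element $T$ of the sort~$\LZB$. Let $\cL^d$ be this expansion and let $\Th_{\PET}^{d}$ be the theory $\ThPET$ together with the two axioms $\Nrm{x}\le 1$ and $\lambda_d(T)$, where $\lambda_d$ is the single positive-bounded Henson formula of~\eqref{eq:Leibman} asserting that $T$ is a unitary Leibman sequence of degree at most~$d$. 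Crucially, $\Th_{\PET}^{d}$ depends on no data beyond the integer~$d$.

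The second step is to recognize the sequence of averages as a \emph{uniformly definable} sequence in the sense required by Proposition~\ref{thm:UMP}. For each standard $n\in\NN$, the \Folner\ measure $\sigma_n$ is the value at the constant~$n$ of the distinguished \Folner\ map $\sigma:\NN\to\MM$, and the average $\AV_nT(x)=\pair{T}{\sigma_n}(x)$ is obtained by composing the distinguished integration pairing $\pair{\cdot}{\cdot}$ with the evaluation map $\BB\times\cH\to\cH$. Thus $n\mapsto\AV_nT(x)$ is given by a sequence of closed $\cL^d$-terms $t_n(x,T)$ whose syntactic shape varies with~$n$ only through the constant feeding the \Folner\ map; this is exactly the format to which the metastability principle applies. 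Since $T$ takes unitary values and $\Nrm{x}\le 1$, each $T_i(x)$ lies in the unit ball, and by convexity so does every average, so $\Nrm{\AV_nT(x)}\le 1$ uniformly; this a~priori bound is what makes a single metastability rate meaningful across all instances.

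The third step is to supply the convergence hypothesis of Proposition~\ref{thm:UMP}: in \emph{every} model $\cM\models\Th_{\PET}^{d}$, the interpreted constants $x$ and $T$ yield, by Theorem~\ref{thm:PolyMET-Z}, a norm-convergent sequence $(\AV_nT(x):n\in\NN)$. Here it matters that Theorem~\ref{thm:PolyMET-Z} is asserted for \emph{all} PET structures, including the saturated and otherwise nonstandard ones in which $\cZ=\ZZ^{\cM}$ properly extends~$\ZZ$; it is precisely convergence in those models that the compactness argument underlying Proposition~\ref{thm:UMP} feeds upon. Applying the principle, there is a metastability rate depending only on the theory $\Th_{\PET}^{d}$---hence only on~$d$---that governs the sequence of averages in every model simultaneously. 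Specializing this rate to the data described in the statement---namely $x$ in the unit ball of the Hilbert sort and $T$ a degree-$\le d$ Leibman sequence in~$\UH$---yields the universal rate $\Eb^d$ and completes the argument.

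I expect the only real obstacle to be the bookkeeping that certifies the averages as a uniformly definable sequence in the precise technical sense demanded by Proposition~\ref{thm:UMP}: one must confirm that the terms $t_n(x,T)$ are assembled from distinguished function symbols carrying uniform bounds and uniform moduli of continuity (as guaranteed by the \emph{uniformity} of $\ThPET$), so that the compactness machinery behind the principle has uniform data on which to operate. Everything else is a direct invocation of results already in hand; in particular, no new quantitative estimate about unitary actions is required, since the metastable content is extracted wholesale from the qualitative Theorem~\ref{thm:PolyMET-Z} by the logic.
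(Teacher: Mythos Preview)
Your proposal is correct and follows essentially the same approach as the paper's own proof. The only cosmetic difference is that the paper explicitly introduces fresh constant symbols $\mathtt{y}_n$ (together with axioms $\mathtt{y}_n = \AV_n\!\mathtt{T}(\mathtt{x})$) to name the terms you call $t_n(x,T)$, thereby matching the literal hypothesis of Proposition~\ref{thm:UMP}, which is stated for a sequence of \emph{constants} rather than definable terms---precisely the bookkeeping point you flagged in your final paragraph.
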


The rest of this section is devoted to proving Theorems~\ref{thm:PolyMET-Z} and~\ref{thm:MetaPolyMET-Z}.

\subsection{Proof preliminaries}
\label{sec:lemmas}

\begin{lemma}[Dominated Convergence Theorem in PET structures]
\label{thm:DCT-PET}
Let $\cL$ be the language of PET structures.
Let~$\cM$ be any \emph{saturated} PET structure.
Let $\fb = (\varphi_n\colon n\in\NN)$ be a bounded sequence in~$\LZH$. 
For all $x\in\ZZ^{\cM}$ assume that the sequence $\fb(x) = (\varphi_n(x) : n\in\NN)$ in~$\cH^{\cM}$ is convergent.
Then, for arbitrary $\mu\in\MM^{\cM}$, the sequence $\pair{\fb}{\mu} = \big(\pair{\varphi_n}{\mu} : n\in\NN)$ in $\cH^{\cM}$ is convergent.
\end{lemma}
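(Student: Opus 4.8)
The plan is to reduce the statement to the classical (vector-valued) Dominated Convergence Theorem by replacing the merely finitely additive internal measure~$\mu$ with a genuinely countably additive Loeb measure, and then integrating the pointwise limit. Throughout, let $C = \sup_{n}\Nrm{\varphi_n} < \infty$ be the uniform bound furnished by the hypothesis that $\fb$ is bounded in $\LZH$, so that $\Nrm{\varphi_n(x)} \le C$ for all $n\in\NN$ and all $x\in\cZ$. First I would define the (external) function $\varphi_{\infty}\colon\cZ\to\cH^{\cM}$ by $\varphi_{\infty}(x) = \lim_{n}\varphi_n(x)$; this limit exists in the genuine Hilbert space $\cH^{\cM}$ by hypothesis, and $\Nrm{\varphi_{\infty}(x)}\le C$. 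The goal then becomes to show $\pair{\varphi_n}{\mu}\to\pair{\varphi_{\infty}}{\mu}$, where the right-hand side is to be interpreted as a Loeb integral.

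Next I would construct the Loeb measure. Using the lattice operations and total-variation norm available on the sort $\MM^{\cM}$, write $\mu = \mu^{+}-\mu^{-}$ with $\mu^{\pm}\ge 0$ and $\nrm{\mu} = \mu^{+}+\mu^{-}$; by linearity of $\pair{\cdot}{\mu}$ it suffices to treat a fixed nonnegative internal measure, which I rename~$\mu$. Here is where saturation enters: by $\aleph_1$-saturation any decreasing sequence of nonempty sets in the internal Boolean algebra $\AZ^{\cM}$ has nonempty intersection, so a decreasing sequence with empty intersection must be eventually empty, and the finitely additive set function $A\mapsto\mu(A)$ is therefore automatically countably additive on $\AZ^{\cM}$. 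The Carath\'eodory extension then produces a countably additive measure $\muL$ of finite total mass $\muL(\cZ)=\mu(\cZ)$ on the $\sigma$-algebra generated by $\AZ^{\cM}$. This is precisely the Loeb-measure construction of the Appendix, and it is the device that converts the failure of countable additivity of $\mu$ (the subtlety flagged in the remarks of Section~\ref{sec:PET-struct}) into a harmless feature.

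The following step is to identify the internal integral with the Loeb integral. Each $\varphi_n\in\LZH$ is internal and uniformly bounded, hence $S$-integrable, and approximation by $\AZ^{\cM}$-simple functions shows that $\pair{\varphi_n}{\mu}$ coincides with the Bochner--Loeb integral $\int\varphi_n\,d\muL$ in $\cH^{\cM}$; this is the content of the Loeb-integration theory of the Appendix applied to the Banach integration framework over $(\cZ,\AZ^{\cM},\mu)$ with values in $\cH^{\cM}$. Since each $\varphi_n$ is internal it is $\muL$-strongly measurable, and its everywhere-pointwise limit $\varphi_{\infty}$ is therefore $\muL$-strongly measurable as well, so that $\int\varphi_{\infty}\,d\muL$ makes sense in $\cH^{\cM}$.

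Finally I would invoke the Dominated Convergence Theorem. The sequence $\varphi_n\to\varphi_{\infty}$ pointwise everywhere on $\cZ$, hence $\muL$-almost everywhere, and is dominated by the constant~$C$, which is $\muL$-integrable because $\muL$ is finite. The vector-valued Dominated Convergence Theorem of the Appendix (Theorem~\ref{thm:DCT}), applied in the genuine Hilbert space $\cH^{\cM}$, then yields $\int\varphi_n\,d\muL\to\int\varphi_{\infty}\,d\muL$; combined with the identification of the preceding step this shows that $\pair{\varphi_n}{\mu}$ converges, which (reassembling the positive and negative parts) is the desired conclusion. The step I expect to be the main obstacle is precisely this identification of the internal, finitely additive integral $\pair{\cdot}{\mu}$ with the countably additive Loeb integral $\int\cdot\,d\muL$: it is here that finite additivity is upgraded to countable additivity, resting on saturation (through the premeasure property) together with the uniform bound~$C$ (through $S$-integrability). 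Once that bridge is in place, the remaining convergence is classical and takes place entirely within the standard Hilbert space $\cH^{\cM}$.
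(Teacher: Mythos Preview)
Your plan overshoots the target and, in doing so, misreads what Theorem~\ref{thm:DCT} actually says.  The paper's proof of this lemma is a one-liner: a saturated PET structure is, by inspection of its sorts and axioms, a Banach integration framework in the sense of Appendix~\ref{sec:Banach-int-framewk} (with $\Omega=\cZ$, $\BB=\cH^{\cM}$, $\LOB=\LZH$, $\MO=\MM^{\cM}$), and Theorem~\ref{thm:DCT} applies verbatim to the internal pairing $\pair{\cdot}{\cdot}$.  No Bochner--Loeb integral is ever constructed, and no identification of the internal integral with an external one is made.

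The Appendix does \emph{not} contain the $\cH$-valued Loeb integration theory you invoke.  Lemmas~\ref{lem:ext-L-meas-approx} and~\ref{lem:sup-inf-internal} concern only bounded \emph{real-valued} external functions, and the proof of Theorem~\ref{thm:DCT} uses them only for the scalar functions $\nrm{\varphi_k-\varphi_j}\in(\LO)^{\cM}$, combined with the axiomatic inequality $\Nrm{\Pair{F}{\mu}}\le\pair{\nrm{F}}{\nrm{\mu}}$.  Your plan to define a Bochner integral $\int\varphi_{\infty}\,d\muL$ in $\cH^{\cM}$ faces a real obstacle: $\cH^{\cM}$ need not be separable and there is no reason the internal $\varphi_n$ (let alone the external $\varphi_{\infty}$) should be essentially separably valued with respect to~$\muL$, so strong measurability and Bochner integrability are not available in general.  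The paper's route deliberately sidesteps this by never leaving the scalar Loeb theory; the vector-valued conclusion is obtained by bounding $\Nrm{\pair{\varphi_k-\varphi_j}{\mu}}$ via the real integral of $\nrm{\varphi_k-\varphi_j}$ and then controlling that with Loeb-measure estimates.  So the ``main obstacle'' you flag is not merely technical but is precisely the point at which your approach diverges from one that works within the paper's framework.
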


(We will only require the special case of Lemma~\ref{thm:DCT-PET} in which $\varphi$ is of the form $n\mapsto\AV_n\! T$ with $T\in\LZB$.) 

\begin{proof}
  A saturated PET structure $\cM$ is a \emph{Banach integration framework} (with Banach sort~$\cH^{\cM}$ and measure-space sort~$\ZZ^{\cM}$) as defined in Appendix~\ref{sec:Banach-int-framewk}.
  Thus, Lemma~\ref{thm:DCT-PET} follows from Theorem~\ref{thm:DCT} whose statement and proof are in Appendix~\ref{sec:DCT}.
\end{proof}
To state the next lemma we need a definition. 
Let the \emph{reverse difference} operator $\Nab:\LZB \to \LZZB$ be the mapping $T\mapsto\Nab T$ characterized by the property that $\Nab T$ evaluates to the function $(i,j)\mapsto
T_j\circ T^{*}_{i+j}$. 
We write $\nabla^iT$ to denote the left evaluation of~$\Nab T$, i.e., $\nabla^iT$ evaluates to the mapping $j\mapsto T_j\circ T^{*}_{i+j}$.
Note that $\nabla^iT$ is the translate by~$i$ of the (forward) difference of $T$ with step~$-i$, i.e., $\nabla^iT =  \lsub{i}\Delta^{\![-i]}T$ holds for all~$i\in\cZ$.
Just like the forward difference operator~$\Delta^{\!\bullet}$, the reverse difference operator $\Nab$ is explicitly definable in any PET structure since it is obtained by composing functions of the structure (the $\ZZ$-action $\LZB\to\LZZB$, the shear map on $\LZZB$, the pointwise adjoint operation $\LZB\to\LZB$, the left inclusion $\LZB\hookrightarrow\LZZB$, and the pointwise composition~$\LZZB\times\LZZB\to\LZZB$);
thus, $\Nab$ may as well be considered a distinguished function of any PET structure.

\begin{lemma}\label{lem:convol}
Let $\cM$ be a PET structure such that $\cN = \NN^{\cM}$ contains a nonstandard natural number $M\in\cN\setminus\NN$. 
  For every standard natural $n\in\NN$ and $T\in\LZB$:
  \begin{equation*}
    (\AV_n T)\circ(\AV_M T)^{*}
    = \pair{\Pair{\Nab T}{\sigma_n}}{\sigma_M}.
  \end{equation*}
\end{lemma}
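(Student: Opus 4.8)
The plan is to rewrite both sides so that their difference becomes a \emph{finite}, standard-length combination of genuine elements of~$\cM$, and then to show this difference vanishes \emph{exactly} because the averaging measure $\sigma_M$ is shift-invariant when $M$ is nonstandard. Throughout, $n$ is a standard natural while $M\in\cN\setminus\NN$; crucially, I manipulate only legitimate elements of~$\cM$ and never isolate the weight $1/(M+1)$, which is \emph{not} a scalar of~$\RR^{\cM}=\RR$.

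First I would record two identities, stated for a generic index~$y$. On one side, the definition of~$\AV_n T$ and $(\AV_y T)^{*}=\pair{T^{*}}{\sigma_y}$ give
\begin{equation*}
  (\AV_n T)\circ(\AV_y T)^{*}
  = \frac{1}{n+1}\sum_{0\le j\le n} T_j\circ\pair{T^{*}}{\sigma_y};
\end{equation*}
on the other, unwinding $\Nab T$, the right integral $\Pair{\Nab T}{\sigma_n}$, and the outer pairing---together with $\pair{T^{*}}{\lsub{j}{\sigma_y}}=\tfrac{1}{y+1}\sum_{0\le i\le y}T^{*}_{i+j}$---gives
\begin{equation*}
  \pair{\Pair{\Nab T}{\sigma_n}}{\sigma_y}
  = \frac{1}{n+1}\sum_{0\le j\le n} T_j\circ\pair{T^{*}}{\lsub{j}{\sigma_y}}.
\end{equation*}
For each fixed standard~$n$ these equalities hold in every classical PET structure and, being expressible in positive bounded form (per the convention $u=v\equiv\Nrm{u-v}\le 0$) with $y$ universally quantified, they lie in $\ThPET$ and hold in~$\cM$; instantiating at $y=M$ and subtracting reduces the lemma to
\begin{equation*}
  \frac{1}{n+1}\sum_{0\le j\le n} T_j\circ\pair{T^{*}}{\,\lsub{j}{\sigma_M}-\sigma_M\,}=0,
\end{equation*}
in which every operation---a standard-length sum, the scalar $1/(n+1)$, composition, and the pairing---is a bona fide operation of~$\cM$.

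The crux is the measure-theoretic claim that $\lsub{j}{\sigma_M}=\sigma_M$ in~$\MM^{\cM}$ for every \emph{standard}~$j$. I would establish it by an archimedean collapse of infinitesimals. Given standard~$j$ and a standard rational $\varepsilon>0$, fix a standard $N$ with $2j/(N+1)\le\varepsilon$ and consider the positive bounded $\cL$-sentence asserting, for all~$y$, that $y\le N$ or $\Nrm{\lsub{j}{\sigma_y}-\sigma_y}\le\varepsilon$. A direct computation on the atomic measure~$\sigma_y$ shows this sentence holds in every classical PET structure, so it belongs to $\ThPET$ and holds in~$\cM$. Since $M$ exceeds every standard natural, the first alternative fails at $y=M$, forcing $\Nrm{\lsub{j}{\sigma_M}-\sigma_M}\le\varepsilon$; as this norm is a nonnegative element of $\RR^{\cM}=\RR$ dominated by every positive standard~$\varepsilon$, it must be~$0$, whence $\lsub{j}{\sigma_M}=\sigma_M$. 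Substituting back, each summand $\pair{T^{*}}{\lsub{j}{\sigma_M}-\sigma_M}$ equals $\pair{T^{*}}{0}=0$, the reduced identity holds, and the lemma follows.

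The step I expect to demand the most care is exactly this last one, because the asserted equality is \emph{not} a term-by-term algebraic identity: it already fails for $n=M=1$, where the left-hand double sum contains $T_1\circ T_0^{*}$ whereas the reindexed right-hand sum carries $T_1\circ T_2^{*}$ in its place. Thus the nonstandardness of~$M$ is essential, and the delicate point is to argue that the $O(n)$ ``boundary'' contributions vanish \emph{exactly} rather than merely infinitesimally---legitimate only because $\RR^{\cM}$ is the standard archimedean line while $M$ is infinite---while performing the whole reduction through honest elements of~$\cM$, never through the inadmissible scalar $1/(M+1)$.
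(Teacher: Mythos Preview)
Your proof is correct and follows essentially the same path as the paper's. Both arguments identify that the discrepancy between the two sides is governed by the differences $\lsub{j}\sigma_M-\sigma_M$ for standard $j\le n$, and both conclude via the same archimedean collapse: the \Folner\ estimate $\Nrm{\sigma_m-\lsub{j}\sigma_m}\le 2j/(m+1)$, transferred from classical structures to~$\cM$, forces the standard real $\Nrm{\lsub{j}\sigma_M-\sigma_M}$ below every positive~$\epsilon$, hence to zero. The only organizational difference is that you isolate the clean statement $\lsub{j}\sigma_M=\sigma_M$ as a standalone fact and then substitute, whereas the paper keeps the full error term together and bounds it by $\epsilon\Nrm{T}^2$ in one stroke (its displayed inequality~\eqref{eq:descent}); this is a matter of presentation rather than a different route.
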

In less cryptic notation, the equation above reads:
\begin{equation*}
  (\AV_n T)\circ(\AV_M T)^{*}
  = \int\!\! \AV_n(\nabla^i T)\,d\sigma_M(i).
  \end{equation*}
  Every step of the proof below is justified by an axiom of~$\ThPET$.
  We prefer to use informal integral notation to make the argument transparent.
    \begin{proof}
Note that $(\AV_{\!M}T)^{*} = \AV_{\!M}(T^{*})$ (in fact, $\pair{T}{\mu}^{*} = \pair{T^{*}}{\mu}$ for all $\mu\in\MM$).
For $m,n\in\NN$ we have:
\begin{equation*}
   \begin{split}
    (\AV_nT)\circ(\AV_mT)^{*} 
&= 
\int T_j\,d\sigma_n(j)
\circ
\int T^{*}_i\,d\sigma_m(i)
=
\iint
T_j\circ T^*_i\,d\sigma_m(i)\,d\sigma_n(j) \\
&=
\iint T_j\circ T^*_{i+j}\,d\sigma_m(i+j)\,d\sigma_n(j)\\
&\qquad
\text{(letting $i = i+j$ in the inner integral)} \\
&=
\iint\nabla^i T_j
\,d\sigma_m(i+j)\,d\sigma_n(j) \\
&=
\iint\nabla^i T_j
\,d\sigma_n(j) \,d\sigma_m(i)\\
&\qquad -\iint 
\nabla^i T_j\,
d[\sigma_m- \lsub{j}(\sigma_m)](i)\,d\sigma_n(j),
\end{split}
\end{equation*}
where $\lsub{j}(\sigma_m)$ is the translation of $\sigma_m$ by~$j$. 
Since $\Nrm{\nabla^iT_j} = \Nrm{\lsub{i}\Delta^{\![-i]}T_j} \le \Nrm{T}^2$ for all $i,j$:
\begin{equation*}
  \begin{split}
    \Nrm{(\AV_nT)\circ(\AV_mT)^* 
- \int\!\! \AV_n(\nabla^iT)\,d\sigma_m(i)}
\le 
\Nrm{T}^2 \cdot
\max_{0\le j\le m} \|\sigma_m - \lsub{j}(\sigma_m)\|.
  \end{split}
\end{equation*}

Given fixed $\epsilon>0$ and $n\in\NN$, let $m_{\epsilon,n}$ be the smallest natural number satisfying $m_{\epsilon,n}\ge 2n/\epsilon$. 
Clearly, $\|\sigma_m - \lsub{j}(\sigma_m)\| \le 2n/(m+1) \le \epsilon$ for $j\le n\le m+1$.
(For $m$ large and $n$ small, this inequality captures the ``approximate invariance'' of the long interval $\{0,1,\dots,m\}$ of~$\ZZ$ under small translations, i.e., the \Folner\ property of the collection of such intervals.)
Then we have
\begin{equation}\label{eq:descent}
  \begin{split}
\Nrm{(\AV_nT)\circ(\AV_mT)^*
- \int\!\! \AV_n(\nabla^i T) \,d\sigma_m(i)}
\le \epsilon\|T\|^2
\qquad\text{whenever $m\ge m_{\epsilon,n}$.}
  \end{split}
\end{equation}
Since $M\in\cN\setminus\NN$ satisfies $M\ge m_{\epsilon,n}$ for all $n\in\NN$ and $\epsilon>0$, the assertion in Lemma~\ref{lem:convol} follows.
\end{proof}

We offer some remarks on the proof of Lemma~\ref{lem:convol} above, which is the crux of our approach to proving Theorem~\ref{thm:PolyMET-Z}. 
Despite its rather short length, it sheds light on the various sorts and distinguished functions in PET structures.
There are no double integrals as such but rather iterated integrals $\LZZB\to\LZB$ and $\LZB\to\BB$---the order of the integration (first on the left and then on the right variable, or vice versa) is immaterial as ensured by the PET axiom
\begin{equation*}
  (\forall{\cT}\in\LZZB)(\forall\mu,\nu\in\MM)
  \bigl[
  \pair{\Pair{\mu}{\cT}}{\nu} = \pair{\Pair{\cT}{\nu}}{\mu}
  \bigr].
\end{equation*}
The validity of the substitution $i=i+j$ in the inner integral is justified by the compatibility of the shear transformation on $\LZZB$ and the action of~$\ZZ$ on~$\MM$:
\begin{equation*}
  (\forall{\cT}\in\LZZB)(\forall\mu\in\MM)
  \bigl[\Pair{\cT}{\mu} = \Pair{\widetilde{\cT}}{\lsub{\bullet}{\mu}}\bigr].
\end{equation*}
Other steps in the proof admit similar formal justifications by axioms of PET structures.

\begin{lemma}\label{lem:divergence}
  Let $\cM$ be a \emph{saturated} Henson structure with an ordered sort $(\cN,\le)$ extending $(\NN,\le)$, and let $\fb$ be a sequence explicitly defined by a $\cL[S]$-term $\varphi(\mathtt{n})$ of sort~$s$  (i.e., $\fb$ is the sequence $(\varphi(n):n\in\NN)$ in~$\cM$, where $\mathtt{n}$ is a variable of sort~$\cN$ and $S$ is some set of parameters of the universe of~$\cM$).
Then every sub-sequential limit of $\varphi$ is of the form $\varphi(M)$ for some $M\in\cN\setminus\NN$. 
If $\varphi(M)=\varphi(N)$ for all~$M\in\cN\setminus\NN$, then the sequence $\fb = (\varphi(n):n\in\NN)$ converges. 
In such case, the common value $\varphi(M)$ is the limit $\lim_{n\to\infty}\varphi(n)$.
\end{lemma}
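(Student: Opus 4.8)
The plan is to let saturation supply nonstandard indices $M\in\cN\setminus\NN$ that play the role of ``limits along ultrafilters'': every subsequential limit of the classical sequence $\fb=(\varphi(n):n\in\NN)$ will be pinned down as an \emph{internal} value $\varphi(M)$, and conversely the hypothesis that all such values agree will force genuine convergence. The key point is that this \emph{bypasses} the non-compactness of the sort~$s$, which would otherwise block the passage from ``all subsequential limits coincide'' to ``the sequence converges.''

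First I would prove the assertion on subsequential limits. Let $y$ be the limit of some subsequence $(\varphi(n_k))_k$. In a single variable $\mathtt{m}$ of sort~$\cN$ I would consider the type
\[
  p(\mathtt{m}) = \{\, \mathtt{m}\ge N : N\in\NN \,\}\ \cup\ \{\, \dd(\varphi(\mathtt{m}),y)\le \epsilon : \epsilon\in\QQ,\ \epsilon>0 \,\},
\]
whose conditions are positive bounded $\cL[S\cup\{y\}]$-formulas because the $\cL[S]$-term $\varphi$ defines a uniformly continuous function, so $\varphi(\mathtt{m})$ is a legitimate term. This countable type is finitely approximately satisfiable: given a bound $N$ and a tolerance $\epsilon$, convergence $\varphi(n_k)\to y$ supplies a \emph{standard} index $n_k\ge N$ with $\dd(\varphi(n_k),y)<\epsilon$. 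By saturation $p$ is realized by some $M\in\cN$; the conditions $M\ge N$ for all standard $N$ force $M\in\cN\setminus\NN$ (no standard natural dominates all of $\NN$), while $\dd(\varphi(M),y)\le\epsilon$ for every rational $\epsilon>0$ forces $\dd(\varphi(M),y)=0$, i.e.\ $\varphi(M)=y$. Applying the same argument to the subtype $\{\mathtt{m}\ge N:N\in\NN\}$ alone also shows $\cN\setminus\NN\neq\emptyset$, so the hypothesis of the second assertion is not vacuous.

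Next I would prove convergence under the assumption that $\varphi(M)=\varphi(N)$ for all $M,N\in\cN\setminus\NN$; call the common value~$L$. Arguing by contradiction, suppose $\fb$ does \emph{not} converge to~$L$. Then there is a rational $\epsilon>0$ with $\dd(\varphi(n),L)\ge\epsilon$ for infinitely many $n\in\NN$. I would realize, again by saturation, the countable type
\[
  \{\, \mathtt{m}\ge N : N\in\NN \,\}\ \cup\ \{\, \dd(\varphi(\mathtt{m}),L)\ge\epsilon \,\},
\]
which is finitely satisfiable precisely because infinitely many standard indices witness $\dd(\varphi(n),L)\ge\epsilon$. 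A realization $M$ lies in $\cN\setminus\NN$, so the hypothesis gives $\varphi(M)=L$ and hence $\dd(\varphi(M),L)=0$, contradicting $\dd(\varphi(M),L)\ge\epsilon>0$. Thus $\fb$ converges to $L$; and since $L=\varphi(M)$ for every nonstandard $M$, the common value $\varphi(M)$ is the limit, which is the final assertion.

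The step I expect to require the most care is the bookkeeping of \emph{approximate} satisfaction intrinsic to Henson's positive bounded logic: the atomic conditions $\dd(\varphi(\mathtt{m}),y)\le\epsilon$ and $\dd(\varphi(\mathtt{m}),L)\ge\epsilon$ must be verified to be finitely \emph{approximately} satisfiable, and the inference from ``$\le\epsilon$ for all $\epsilon$'' to an exact equality uses the approximate semantics together with the fact that $\varphi$ is a term (hence of controlled modulus of uniform continuity). Conceptually, the genuine subtlety is that in a non-compact sort the equality of all subsequential limits does not by itself yield convergence; what rescues the argument is exactly that saturation manufactures an internal index $M$ realizing any persistent deviation, and such an $M$ is forced to be nonstandard and therefore to contradict the hypothesis.
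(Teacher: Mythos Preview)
Your proposal is correct and is precisely the routine saturation argument the paper has in mind; indeed, the paper does not spell out a proof at all, stating only that ``the proof of Lemma~\ref{lem:divergence} is a routine application of saturation left to the reader.'' Your two types---one pinning a subsequential limit to an internal value $\varphi(M)$ at a nonstandard~$M$, the other transferring a persistent $\epsilon$-deviation to a nonstandard index---are exactly the expected instantiations of that routine, and your remarks on approximate satisfaction in Henson's positive bounded logic address the only point requiring care.
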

The proof of Lemma~\ref{lem:divergence} is a routine application of saturation left to the reader.

\begin{lemma}\label{lem:density}
Let~$\cX$, $\cY$ be metric spaces, and let $\varphi : (x,n)\mapsto\varphi_n(x)$ be a function from $\cX\times\NN$ to~$\cY$ such that $\varphi_n(\cdot):\cX\to\cY$ is $1$-Lipschitz for each $n\in\NN$ (i.e., $d(\varphi_n(x),\varphi_n(y))\le d(x,y)$ for $x,y\in\cX$). 
Let $S$ be a dense subset of~$\cX$ such that $\fb(x)$ converges for all $x\in S$.  
Then $\fb(x)$ converges for all~$x\in\cX$.
\end{lemma}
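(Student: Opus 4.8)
The plan is to run the standard ``$\epsilon/3$'' density argument: for each $x\in\cX$ I would show that the sequence $\fb(x) = (\varphi_n(x):n\in\NN)$ is Cauchy in~$\cY$, and then invoke completeness of~$\cY$ to extract the limit. Here I should flag at the outset the one genuine subtlety: for a completely arbitrary metric space~$\cY$ the stated conclusion can fail, since the $1$-Lipschitz maps~$\varphi_n$ may drive the values $\varphi_n(x)$ toward a ``hole'' in~$\cY$ (a Cauchy but non-convergent target), so completeness of~$\cY$ is the property that the argument really needs. This is harmless for us: in every application $\cY = \cH$ is a Hilbert space, hence complete, so the Cauchy criterion suffices.

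First I would fix $x\in\cX$ and $\epsilon>0$ and use the density of~$S$ to pick $s\in S$ with $d(x,s)<\epsilon/3$. The $1$-Lipschitz hypothesis on each $\varphi_n$ then gives the \emph{uniform} bound $d(\varphi_n(x),\varphi_n(s))\le d(x,s)<\epsilon/3$ for every~$n$ (and likewise with~$m$ in place of~$n$). Combining these two estimates with the triangle inequality yields, for all $m,n$,
\begin{equation*}
  d(\varphi_n(x),\varphi_m(x))
  \le d(\varphi_n(x),\varphi_n(s)) + d(\varphi_n(s),\varphi_m(s)) + d(\varphi_m(s),\varphi_m(x))
  < \tfrac{2\epsilon}{3} + d(\varphi_n(s),\varphi_m(s)).
\end{equation*}

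Next, since $s\in S$ the sequence $\fb(s)$ converges, hence is Cauchy, so I may choose~$N$ with $d(\varphi_n(s),\varphi_m(s))<\epsilon/3$ whenever $m,n\ge N$. Substituting into the display gives $d(\varphi_n(x),\varphi_m(x))<\epsilon$ for all $m,n\ge N$, which is exactly the statement that $\fb(x)$ is Cauchy in~$\cY$. Appealing to completeness of~$\cY$ (here $\cY=\cH$) then produces a limit, establishing convergence of $\fb(x)$ for the arbitrary $x\in\cX$. The computation itself is entirely routine; the only step that carries any real weight is the passage from Cauchy to convergent, which is precisely where completeness of the target enters, and which is why the lemma should be read with~$\cY$ complete.
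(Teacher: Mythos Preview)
Your proof is correct and is exactly the standard $\epsilon/3$ argument one would expect here; the paper itself omits the proof entirely, calling it ``straightforward,'' so there is nothing to compare against.

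Your observation about completeness of~$\cY$ is well taken and worth highlighting: the lemma as literally stated (with $\cY$ an arbitrary metric space) is false. A simple counterexample is $\cX = [0,1]$, $\cY = (0,1]$, $S = (0,1]$, and $\varphi_n(x) = \max(x,1/n)$; each $\varphi_n$ is $1$-Lipschitz into~$\cY$, and $\fb(x)\to x\in\cY$ for every $x\in S$, yet $\fb(0) = (1/n)$ is Cauchy with no limit in~$\cY$. So the passage from Cauchy to convergent genuinely requires completeness, and you are right that in the paper's only application the target is the Hilbert sort~$\cH$, which is complete. Your reading of the lemma ``with $\cY$ complete'' is the correct one.
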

We omit the straightforward proof of Lemma~\ref{lem:density}.

\subsection{Proof of Theorem~\ref{thm:PolyMET-Z}}
\label{sec:proofPolyMET-Z}

For each fixed Leibman degree $d\in\NN$, we first prove Theorem~\ref{thm:PolyMET-Z} for unitary Leibman sequences of degree at most~$d$ in any \emph{saturated} PET structure~$\cM$.
The descent argument on the degree~$d$ is characteristic of Bergelson's PET induction~\cite{Bergelson:1987}.

The assertion is trivial for $T = \Bone$.
If $T$ (is pointwise unitary and) has Leibman degree $\degL T=0$, we have $T_i\circ T^*_0 = \Delta^{\!i}T_0 = I$, hence $T_i = T_0$ for all $i\in\cZ$, so $T$ is constant. 
Thus, the sequences $\AVb T$ and $\AVb T(x)$ are also constant (all terms are equal to~$T_0$ and $T_0(x)$, respectively), so Theorem~\ref{thm:PolyMET-Z} follows for Leibman polynomials of degree~$0$.

Assume now that the assertion in Theorem~\ref{thm:PolyMET-Z} is proved for all $T\in\LZB$ having Leibman degree less than some positive integer~$d$. 
Fix $T\in\LZB$ with $\degL(T)=d$.

\begin{lemma}\label{lem:structured}
  If $x = (\AV_{\!M}\!T)^*(y)$ for some $M\in\cN\setminus\NN$ and $y\in\cH$, then $\AVb T(x)$ converges.
\end{lemma}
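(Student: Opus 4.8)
The plan is to exploit the special ``structured'' form $x=(\AV_M T)^*(y)$ together with the convolution identity of Lemma~\ref{lem:convol} to express each average $\AV_n T(x)$ as an integral, against $\sigma_M$, of averages of the reverse differences $\nabla^i T$. Since these reverse differences have strictly smaller Leibman degree, the inductive hypothesis of the PET induction will give convergence of the integrand pointwise in $i$, and the Dominated Convergence Theorem in PET structures (Lemma~\ref{thm:DCT-PET}) will then transfer convergence to the integral. Concretely, I would first apply $\AV_n T$ to $x$ and use $x=(\AV_M T)^*(y)$ to write $\AV_n T(x) = [(\AV_n T)\circ(\AV_M T)^*](y)$. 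Because $\cM$ is saturated, $\cN$ contains nonstandard naturals (in particular the given $M$), so Lemma~\ref{lem:convol} applies and yields $\AV_n T(x) = \int \AV_n(\nabla^i T)(y)\,d\sigma_M(i)$ for every standard $n\in\NN$.

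Next I would invoke the identity $\nabla^i T = \lsub{i}\Delta^{[-i]}T$. Since $T$ is pointwise unitary with $\degL T = d$, each single discrete difference $\Delta^{[-i]}T$ is again pointwise unitary and of Leibman degree at most $d-1$; translation preserves both unitarity and degree, so $\nabla^i T$ is, for \emph{every} $i\in\cZ$ (standard or not), a unitary Leibman sequence of degree at most $d-1$. The inductive hypothesis then applies to $\nabla^i T$ and the vector $y$, giving convergence of $(\AV_n(\nabla^i T)(y):n\in\NN)$ in $\cH$ for each fixed $i$.

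To assemble these pointwise limits I would package the integrands into a single sequence in $\LZH$: for each $n$ let $\varphi_n$ be the element $i\mapsto\AV_n(\nabla^i T)(y)$ of $\LZH$, obtained from $\Pair{\Nab T}{\sigma_n}\in\LZB$ by pointwise application to the constant function $y$. The family $\fb=(\varphi_n:n\in\NN)$ is bounded, since $\Nrm{\varphi_n(i)}\le\Nrm{\nabla^i T}\,\Nrm{y}\le\Nrm{T}^2\Nrm{y}=\Nrm{y}$, and by the previous step $\fb(i)$ converges for every $i\in\cZ$. Applying Lemma~\ref{thm:DCT-PET} with $\mu=\sigma_M$ yields convergence of $\pair{\fb}{\sigma_M}=(\pair{\varphi_n}{\sigma_M}:n\in\NN)$; since $\pair{\varphi_n}{\sigma_M}=\int \AV_n(\nabla^i T)(y)\,d\sigma_M(i)=\AV_n T(x)$, this is exactly the asserted convergence of $\AVb T(x)$.

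The main obstacle, or at least the point demanding the most care, is the degree-reduction step carried out \emph{internally}: one must ensure that $\nabla^i T$ is a genuine unitary Leibman sequence of degree at most $d-1$ for \emph{all} $i\in\cZ$, including nonstandard $i$. This is precisely where the internal formulation of Definition~\ref{def:Leibman-seq-PET} (trivial iterated differences for all $i_0,\dots,i_d\in\cZ$, not merely for standard indices) is essential, and it is also what makes the integral against $\sigma_M$ meaningful for the nonstandard $M$. A secondary technical point is checking that $\varphi_n$ genuinely defines a bounded element of $\LZH$, so that the hypotheses of the PET Dominated Convergence Theorem are met; this follows from the unitarity bound $\Nrm{T}=1$ but should be stated explicitly.
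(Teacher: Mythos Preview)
Your proposal is correct and follows essentially the same approach as the paper: use Lemma~\ref{lem:convol} to rewrite $\AV_nT(x)$ as $\pair{\Pair{\Nab T(y)}{\sigma_n}}{\sigma_M}$, reduce the Leibman degree via $\nabla^iT=\lsub{i}\Delta^{[-i]}T$ and translation-invariance of degree, apply the inductive hypothesis pointwise in $i\in\cZ$, and conclude with Lemma~\ref{thm:DCT-PET}. Your write-up is in fact more explicit than the paper's on the two technical points you flag (internal degree reduction for nonstandard $i$, and checking that $\varphi_n\in\LZH$ is bounded), but the argument is the same.
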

\begin{proof}
  Note that $\AVb T(x)$ is bounded by~$\Nrm{T}\Nrm{x}$.
  By Lemma~\ref{lem:convol}, we have $\AV_nT(x) = \AV_nT\circ\AV_{\!M}T^{*}(y) = \pair{\Pair{\Nab T(y)}{\sigma_n}}{\sigma_M}$ for $n\in\NN$.
  For $i\in\cZ$ we have $\nabla^iT = \lsub{i}\Delta^{\![-i]}T$.
  By the invariance of Leibman degree under translation and the assumption $\degL(T)\le d$, we have $\degL(\nabla^iT) = \degL(\lsub{i}\Delta^{\![-i]}T) < d$, and hence $\AVb(\nabla^iT)(y)$ is convergent for all $i\in\cZ$ by the inductive hypothesis. 
An application of Lemma~\ref{thm:DCT-PET} concludes the proof.
\end{proof}

The space $\Struct$ of \emph{structured elements} of~$\cH$ (relative to~$T$) is the closure of the linear span of all elements of the form $(\AV_{\!M}\!T)^{*}(y)$ for $M\in\cN\setminus\NN$ and $y\in\cH$.
By linearity and Lemmas~\ref{lem:density} \&~\ref{lem:structured}, $\AVb T(x)$ converges for all structured elements~$x$.
(The $1$-Lipschitz condition follows from the inequalities $\|\!\AV_n\!T(x) - \AV_n\!T(y)\| \le \|\!\AV_n\!T\|\|y-x\|$ and $\|\!\AV_n\!T\| = \|\!\left\langle T,\sigma_n \right\rangle\!\| \le \|T\|\cdot\|\sigma_n\| = 1\cdot1 = 1$.)

The space $\Rand$ of \emph{pseudorandom elements} of~$\cH$ (relative to~$T$) is the orthogonal complement of $\Struct$ in~$\cH$.
By the fundamental theorem of linear algebra and the definition of structured elements, an element $x\in\cH$ is pseudorandom precisely when $\AV_{\!M}\!T(x)=0$ for all $M\in\cN\setminus\NN$. 
By Lemma~\ref{lem:divergence}, $\AVb T(x)$ converges to zero in this case.

Combining the pseudorandom and structured cases using linearity, we deduce that all averages $\AVb\!T(x)$ converge.  
This concludes the inductive step of the proof of Theorem~\ref{thm:PolyMET-Z} in any saturated PET structure~$\cM$.

To conclude the proof for any PET structure over~$\ZZ$, let $\cM$ be any (not necessarily saturated) PET structure, and let $\tM$ be a saturated elementary $\cL$-extension of~$\cM$ in Henson's logic. 
For fixed degree $d\in\NN$ and $T\in(\LZB)^{\cM}$, the property that $T$ is a unitary Leibman sequence of degree at most~$d$ is $\cL$-axiomatizable, hence it is true in~$\tM$ when $T$ is regarded as an element of~$(\LZB)^{\tM}$.
For $x\in\cH^{\tM}$ we have proved that $\AVb\! T(x)$ converges since $\degL(T)\le d$.  
\emph{A fortiori}, $\AVb\! T(x)$ converges for~$x\in\cH^{\cM}$. 
This concludes the proof of Theorem~\ref{thm:PolyMET-Z} in full generality.

\subsection{Proof of Theorem~\ref{thm:MetaPolyMET-Z}}
\label{sec:proofMetaPolyMET-Z}

Let $\tL$ expand the language $\cL$ of PET structures with new constants $(\mathtt{T}, \mathtt{x}, \mathtt{y}_n : n<\omega)$, with  $\mathtt{T}$ of sort~$\LZB$, and $\mathtt{x}$ and all $\mathtt{y}_n$ of sort~$\cH$.
For fixed $d\in\NN$, consider the $\tL$-theory
\begin{equation*}
  \Lambda_d = 
  {\ThPET} \cup
  \left\{
    \lambda_d(\mathtt{T}),
    \|\mathtt{x}\|\le 1,
    \mathtt{y}_n = \AV_n\!\mathtt{T}(\mathtt{x}) :
    n<\omega
  \right\},
\end{equation*}
where $\lambda_d(\mathtt{T})$ is formula~\eqref{eq:Leibman} stating that the interpretation of $\mathtt{T}$ is Leibman of degree at most~$d$.
Note that $\Lambda_d$ is a uniform theory: $\lambda_d(\mathtt{T})$ implies $\|\mathtt{T}\| \le 1$, hence also $\Nrm{\texttt{y}_n}\le 1$.
Every model~$\tM$ of~$\Lambda_d$ is an expansion of a PET structure~$\cM$ having the form~$\tM = (\cM,T,x,\AVb T(x))$.
By Theorem~\ref{thm:PolyMET-Z}, all sequences $(c_n^{\tM}) = \AVb T(x)$ are convergent.
An application of Proposition~\ref{thm:UMP} finishes the proof of Theorem~\ref{thm:MetaPolyMET-Z}.

\subsection{Proof of Theorem~\ref{thm:AbelPolyMET}}
\label{sec:proofAbelianMET-Z}

A classical Leibman sequence $\Tb = (T_k : k\in\ZZ)$ with $\degL(T)\le d$ in an \emph{abelian} subgroup $K$ of the group~$\UH$ of unitary operators on a Hilbert space~$\cH$ is easily shown to have the form $T_k = U_0\circ U_1^k\circ U_2^{k\choose 2}\circ\dots\circ U_d^{k\choose d}$ where $U_j = \Delta^jT_0$ and ${k\choose j} = k(k-1)\dots(k-j+1)/j!$ are binomial coefficients for $j=0,1,\dots,d$.
Since the functions $k\mapsto{k\choose j}$ ($j=0,1,\dots,d$) are a $\ZZ$-basis for polynomial mappings $p:\ZZ\to\ZZ$ of degree at most~$d$, Theorem~\ref{thm:AbelPolyMET} is an immediate corollary of Theorems~\ref{thm:PolyMET-Z} and~\ref{thm:MetaPolyMET-Z}.

\section{A Mean Ergodic Theorem for unitary polynomial actions of abelian groups}
\label{sec:general}

To formulate our most general result on convergence of averages, we replace $\NN$ with an arbitrary countable directed set~$(\DD,\le)$ and $\ZZ$ with an arbitrary abelian group~$(\GG,+)$ endowed with a countable \Folner\ $\DD$-net $\Fb = (\cF_j:j\in\DD)$ of nonempty finite subsets of~$\GG$. 
These assumptions are sufficient to ensure that the proofs of natural generalizations of Theorems~\ref{thm:PolyMET-Z} and~\ref{thm:MetaPolyMET-Z} carry through in this more general context, \emph{mutatis mutandis,} from those given in Section~\ref{sec:poly-PET}.

\begin{theorem}[Poly-MET: Mean Ergodic Theorem for unitary polynomial actions of an abelian group]
\label{thm:PolyMET}
Fix an abelian group~$(\GG,+)$ and a \Folner\ net $\Fb = (\cF_j:j\in\DD)$ of subsets of~$\GG$, indexed by a countable directed set $(\DD,\le)$.
Let $\cH$ be a Hilbert space.
Let $T : \GG\to\UH$ be a polynomial mapping, in Leibman's sense, into the group~$\UH$ of unitary transformations of~$\cH$.
For every $x\in\cH$, the $\DD$-net $\AVb T(x) = (\AV_iT(x) : i\in\DD)$ in~$\cH$, of averages relative to~$\Fb$:
\begin{equation*}
  \AV_iT(x) = \frac{1}{\#\cF_i} \sum_{g\in\cF_i}T_g(x)
\end{equation*}
of~$x$ under~$T$, converges in the norm topology of~$\cH$.

In fact, given fixed choices of $\GG$, $\DD$, $\Fb$ and $d\in\NN$, there exists a rate of metastability
\begin{equation*}
  \Eb^{(\GG,\DD,\Fb,d)} =
  \left(E_{\epsilon,\eta} : \epsilon>0,
  \eta\in\prod_{i\in\DD}\Pfin(\DD_{\ge i})\right)
\end{equation*}
(with $E_{\epsilon,\eta}\in\Pfin(\DD)$ for each $\epsilon,\eta$)
that applies universally to all sequences $\AVb T(x)$ for any element $x$ in the unit ball of any Hilbert space~$\cH$ and any Leibman polynomial $T:\DD\to\UH$ of degree at most~$d$.
\end{theorem}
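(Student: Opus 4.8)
The plan is to reproduce, \emph{mutatis mutandis}, the entire machinery of Section~\ref{sec:poly-PET} inside a Henson class of ``PET structures over~$\GG$ relative to~$\Fb$'' that differs from~$\PET$ only in that the sort~$\ZZ$ is replaced by~$\GG$, the index sort~$\NN$ by the directed set~$\DD$, the sort $\ZZ^2$ by $\GG\times\GG$, and the \Folner-measure map $\sigma\colon\NN\to\MM$ by the map $\sigma\colon\DD\to\MM$ sending $i\mapsto\sigma_i = \frac{1}{\#\cF_i}\sum_{g\in\cF_i}\delta_g$. All remaining sorts (the spaces $\Li(\GG,\cH)$, $\Li(\GG,\BB)$, the measure sort~$\MM$, etc.) and distinguished functions (the translation action of~$\GG$, the shear map, pointwise composition and adjoint, the integration pairings $\pair{\cdot}{\cdot}$, $\Pair{\cdot}{\cdot}$) are carried over verbatim. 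The classical such structures generate a uniform Henson theory, and I would fix an $\aleph_1$-saturated model~$\cM$ in which to argue; as before, convergence of the \emph{standard} net $\AVb T(x)=(\AV_iT(x):i\in\DD)$ is a purely classical statement preserved under elementary extension.

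Two features of the new index require care. First, since $\DD$ is merely a countable directed set, I would use $\aleph_1$-saturation to realize a single \emph{cofinal nonstandard index} $\iota\in\DD^{\cM}$ with $\iota\ge i$ for every standard $i\in\DD$; finite satisfiability of $\{\iota\ge i : i\in\DD\}$ is exactly the directedness of~$\DD$. The divergence lemma (the analogue of Lemma~\ref{lem:divergence}) then reads: every cluster value of $\AVb T(x)$ equals $\AV_\iota T(x)$ for some cofinal nonstandard~$\iota$, and the net converges iff all such values agree. Second, and this is the crux, I must re-prove the convolution identity of Lemma~\ref{lem:convol} in the \Folner\ setting. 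The algebraic manipulation producing
\begin{equation*}
  (\AV_iT)\circ(\AV_\iota T)^{*}
  = \int\!\!\AV_i(\nabla^g T)\,d\sigma_\iota(g)
\end{equation*}
is formal and transfers (the double-integral/shear steps are justified by the corresponding axioms over~$\GG$), but the error term is now bounded by $\max_{g\in\cF_i}\Nrm{\sigma_\iota - \lsub{g}\sigma_\iota}$ rather than by the interval-specific estimate $2n/(m+1)$. Here the \Folner\ hypothesis does the work: for each fixed $g\in\GG$ one has $\Nrm{\sigma_j - \lsub{g}\sigma_j} = \#(\cF_j\triangle(g+\cF_j))/\#\cF_j\to 0$, and since each standard~$\cF_i$ is a finite set of standard group elements, the maximum over $g\in\cF_i$ is infinitesimal at the cofinal nonstandard index~$\iota$. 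This is precisely the estimate that makes $(\AV_iT)\circ(\AV_\iota T)^{*}$ equal the integral of lower-degree averages.

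With these two lemmas, the generic Dominated Convergence Theorem (Lemma~\ref{thm:DCT-PET}, valid in any Banach integration framework) and the density/Lipschitz lemma (Lemma~\ref{lem:density}) transported unchanged, the PET induction on Leibman degree proceeds exactly as in Section~\ref{sec:proofPolyMET-Z}. The degree-$0$ case is the constant case; in the inductive step I would split $\cH$ into the closed span $\Struct$ of structured vectors $(\AV_\iota T)^{*}(y)$ and its orthogonal complement $\Rand$. On $\Struct$ the convolution identity reduces each average to an integral of $\AVb(\nabla^g T)(y)$, and since $\degL(\nabla^g T)=\degL(\lsub{g}\Delta^{\![-g]}T)<d$ (a single difference lowers Leibman degree and translation preserves it), the inductive hypothesis plus Dominated Convergence give convergence; on $\Rand$, pseudorandom vectors satisfy $\AV_\iota T(x)=0$ for all cofinal nonstandard~$\iota$, whence the net converges to~$0$ by the divergence lemma. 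Linearity combines the two cases. Finally, I would descend from the saturated model to an arbitrary PET-over-$\GG$ structure by elementary extension, and obtain the uniform metastability rate $\Eb^{(\GG,\DD,\Fb,d)}$ by applying the Uniform Metastability Principle (Proposition~\ref{thm:UMP}) to the uniform $\tL$-theory asserting that $\mathtt T$ is Leibman of degree at most~$d$, $\Nrm{\mathtt x}\le 1$, and $\mathtt y_i=\AV_i\mathtt T(\mathtt x)$; this is where the net-indexed form of the rate, quantified over $\eta\in\prod_{i\in\DD}\Pfin(\DD_{\ge i})$, emerges automatically. The main obstacle, as indicated, is the convolution lemma: one must check that the \Folner\ property $\Nrm{\sigma_j-\lsub{g}\sigma_j}\to 0$ is strong enough to force the error term to vanish at a cofinal nonstandard index, and that the twofold-integral and shear manipulations remain valid over a general abelian group rather than over~$\ZZ$. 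Everything else is a faithful, if laborious, transcription of the $\ZZ$-case.
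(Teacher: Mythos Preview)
Your proposal is correct and follows essentially the same approach as the paper's own proof: define PET structures over~$\GG$ by replacing $\ZZ$ with~$\GG$ and $\NN$ with~$\DD$, adapt Lemma~\ref{lem:convol} via the \Folner\ estimate $\Nrm{\sigma_j-\lsub{g}\sigma_j}\to 0$ (uniformly over the finitely many $g\in\cF_i$), adapt Lemma~\ref{lem:divergence} using cofinal nonstandard indices in~$\DD^{\cM}$, and then rerun the PET induction and the Uniform Metastability Principle verbatim. The paper's proof is likewise only an outline, and the points you flag as requiring care---the \Folner\ error bound and the existence of cofinal nonstandard indices from directedness plus countability---are exactly the ones the paper singles out.
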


\begin{remark}
The definition of Leibman polynomial mapping $T:\GG\to\UH$ is a straightforward generalization of that of Leibman sequence $\ZZ\to\UH$~\cite{Leibman:2002}.
The discrete difference $\Delta^{\!g}T$ of $T$ with step $g\in\GG$ is the mapping $h\mapsto T_{g+h}\circ T^{*}_h$.
Define $\degL(T)\le 0$ if $\Delta^gT = \Bone$ (where $\Bone$ is the constant mapping $g\mapsto I$).
Recursively, let $\degL(T)\le d+1$ mean that $\degL(\Delta^{\!g}T)\le d$ for all $g\in\GG$.
Then $T$ is a Leibman mapping if $\degL(T)\le d$ for some~$d$;
the least such~$d$ is $\degL(T)$, although we adopt the convention $\degL(\Bone) = -\infty$.
\end{remark}

We only provide an outline of the proof of Theorem~\ref{thm:PolyMET} since it is formally identical to the arguments in Sections~\ref{sec:lemmas}--\ref{sec:proofMetaPolyMET-Z}.
The definition of classical PET structure over~$\GG$ is completely analogous to that of PET structure over~$\ZZ$ in section~\ref{sec:PET-classical}---simply replace all instances of~$\ZZ$ by~$\GG$ and those of~$\NN$ by~$\DD$.
The \Folner\ sequence $\Fb$ is captured indirectly via the \Folner\ measure map $\sigma : \DD\to\MM$, where 
\begin{equation*}
  \sigma_i = \frac{1}{\#\cF_i} \sum_{g\in\cF_i}\delta_g\qquad
  \text{for all $i\in\DD$.}
\end{equation*}

The Henson language~$\cL$ for the class of classical PET structures over~$\GG$ is clear.
Any model of the theory $\ThPET^{\DD,\GG}$ of such classical structures is an (abstract) PET structure over~$\GG$.
(Note that the language~$\cL$ depends on both~$\GG$ and~$\DD$;
the theory $\ThPET^{\DD,\GG}$ further depends on the choice of the \Folner\ net~$\Fb$.)

Analogues of Theorems~\ref{thm:PolyMET-Z} and~\ref{thm:MetaPolyMET-Z} hold in the class of all PET structures over~$\GG$.
The scheme of proof is exactly the same.
The countability hypothesis on~$\DD$ is an essential hypothesis in Theorem~\ref{thm:DCT}, which enters the proof via an analogue of Lemma~\ref{thm:DCT-PET}.

Lemma~\ref{lem:convol} uses the exact same definition of reverse difference $(\nabla^gT)_h = T_h\circ T^{*}_{g+h}$.
Its proof is adapted using the definition of \Folner\ net as we now indicate:
By definition, given $\epsilon>0$ and $g\in\GG$ there is $k = k_{g,\epsilon}\in\DD$ such that the symmetric difference $\cF_l\triangle(g+\cF_l)$ has cardinality at most $\epsilon\cdot\#\cF_l$ for all $l\ge k$.
Thus, letting $K_{i,\epsilon} = \max\{k_{g,\epsilon} : g\in\cF_i\}$, we have $\|\sigma_j - \lsub{g}(\sigma_j)\| \le \epsilon$ for all $g\in\cF_i$ provided $j\ge K_{i,\epsilon}$.
Whence follows the proof of an analog of Lemma~\ref{lem:convol} stating that $(\AV_jT)\circ(AV_KT)^{*} = \int\AV_{\!j}(\nabla^gT)\,d\sigma_K(g)$ holds whenever $j\in\DD$ and $K\in\DD^{\cM}$ satisfies $K\ge i$ for all $i\in\DD$.

Lemma~\ref{lem:divergence} continues to hold provided one replaces the nonstandard natural numbers $M,N$ with nonstandard elements $J,K$ of~$\DD^{\cM}$ that satisfy $J,K\ge i$ for all standard elements $i\in\DD$.

The arguments in Sections~\ref{sec:proofPolyMET-Z} and~\ref{sec:proofMetaPolyMET-Z} apply verbatim once the lemmas in Section~\ref{sec:lemmas} have been adapted, completing the proof of Theorem~\ref{thm:PolyMET}.

\appendix

\section{A Dominated Convergence Theorem for notions of integration in Banach spaces}
\label{sec:appendix}

This appendix bears a close connection to our prior manuscript on measure, integration and metastable convergence in Henson structures~\cite{Duenez-Iovino:2017}. 
Our main goal is proving Lemma~\ref{thm:DCT-PET}. 
Rather than doing so in the specific context of PET structures, we prove a more general result (Theorem~\ref{thm:DCT}) about sequences of integrals of functions on a finite measure space taking values in a Banach space.
This requires a number of preliminary steps.

\subsection{Integration structures}
\label{sec:int-struct}
We recall the class of \emph{integration structures} (with underlying finite positive measure) introduced in our earlier manuscript, to which we refer the reader for details~\cite{Duenez-Iovino:2017}. 
These are saturated models of the Henson theory $\Th_{\int}$ of integration with respect to a positive finite measure on structures with (classical) sorts $\RR$, $\Omega$, $\AO$, $\LO$ where $\RR$ is the set of real numbers, $\AO$ is a $\sigma$-algebra of subsets of~$\Omega$, and $\LO$ is the set of bounded $\AO$-measurable (everywhere-defined) real functions on~$\Omega$. 
(Here $\Omega$, $\AO$ are discrete while $\RR$, $\LO$ are real Banach spaces.) 
This theory contains all Henson formulas involving the functions and distinguished constants below that are valid in such structures:
\begin{itemize}
\item \emph{Constants:} Rational numbers $r\in\RR$, zero vector in the Banach sort $\LO$, an arbitrary point (``anchor'') $\omega_0\in\Omega$, the empty set $\emptyset\in\AO$, and the improper subset $\Omega\in\AO$.
\item \emph{Functions:}
  \begin{itemize}
  \item Arithmetic operations (addition and multiplication), absolute value and lattice operations (binary min and max) on~$\RR$;
  \item The characteristic function $\bin{\cdot}{\cdot}: \Omega\times\AO\to\{0,1\}\subseteq\RR$ of the membership relation $\in$ on~$\Omega\times\AO$;
  \item Banach operations (addition, scalar multiplication) and norm on $\LO$ 
(namely, $\|f\| = \sup_{x\in\Omega}|f(x)|$ for $f\in\LO$---note that an almost-everywhere null function~$f$ has positive norm per this definition unless $f=0$ everywhere);
\item The evaluation map $\LO\times\Omega\to\RR : (f,x)\mapsto f(x)$;
  \item The Banach lattice operations (binary min and max) on $\LO$;
  \item The unary operation of pointwise absolute value $f\mapsto\nrm{f}$ on~$\LO$ where $\nrm{f}\in\LO$ is the function $x\mapsto |f(x)|$;
  \item The Boolean algebra operations of union, intersection and relative complement $S\mapsto S^{\complement} = \Omega\setminus S$ on~$\AO$;
  \item The characteristic-function map $\chi : \AO\to\LO : S\mapsto\chi_S$;
  \item A positive finite measure $\mu$ on~$\Omega$;
  \item The integration operator $I : \LO\to\RR : f\mapsto\int_{\Omega}f\,d\mu$.
  \end{itemize}
\end{itemize}
Let $\cL$ be any Henson language including sort symbols $\RR,\Omega,\AO,\LO$ as well as constant and function symbols matching the lists above, and let $\Th_{\int}$ be the $\cL$-theory of such structures $\cM = (\mathbf{S}, \mathbf{F}, \mathbf{C})$, where $\mathbf{S}$ is the list of sorts, $\mathbf{F}$ the collection of distinguished functions, and $\mathbf{C}$ the set of distinguished elements of~$\cM$. 
An \emph{(abstract) pre-integration structure} is a model of~$\Th_{\int}$. 
An \emph{integration structure} is a saturated model of~$\Th_{\int}$. 
If $\cM$ is any pre-integration structure (whether saturated or not), then via interpretation of constants, the membership relation $\bin{\cdot}{\cdot}$, and the evaluation map, we may identify $\RR^{\cM}$ with $\RR$, ${\AOM}$ with a Boolean algebra of subsets of~$\Omega^{\cM}$, and $\LO$ with a set of functions~$\Omega\to\RR$.
However, $\AO$ need \emph{not} be a $\sigma$-algebra.
Accordingly, $\mu^{\cM}$ is typically just a \emph{finitely} (not countably) additive measure on~$(\Omega^{\cM},{\AOM})$, while elements $f\in\LO$ are identified with uniformly bounded functions on~$\Omega$ that may only be \emph{approximately} $\AO$-measurable.%
\footnote{A function $f$ on~$\Omega$ is \emph{approximately $\AO$-measurable} if for all rational $r<s$ there exists $A\in\AO$ such that $f^{-1}((-\infty,r))\subseteq A\subseteq f^{-1}((-\infty,s])$---a property axiomatizable by countably many Henson formulas in the logic of approximate satisfaction (\cite{Duenez-Iovino:2017}, Proposition~4.4).} 
Nevertheless, in earlier work we have shown how the classical (i.e., $\sigma$-additive) theory of integration of bounded measurable functions over a finite measure space and the corresponding version of the Dominated Convergence Theorem are recovered essentially verbatim in \emph{saturated} Henson integration structures via an analogous construction to that of Loeb measure in nonstandard analysis~\cite{Duenez-Iovino:2017}.

\subsection{Loeb structures}
\label{sec:Loeb}

\begin{definition}[Loeb structure]\label{def:Loeb-struct}
  Let $\ThL$ be the reduct of the $\cL$-theory $\Th_{\int}$ of integration structures with a positive measure to the language~$\cL'$ obtained by removing from~$\cL$ the symbol for sort $\LO$ as well as all functions and constants involving~$\LO$ (such as the symbol~$I$ for the integral).
  A model of $\ThL$ is a \emph{pre-Loeb structure.} 
(Note that $\cM$ may be an $\tL$-structure for a language $\tL$ properly extending the language~$\cL'$ of Loeb structures, and thus have other sorts, functions and constants prescribed by~$\tL$ but not by~$\cL'$.) 

A \emph{Loeb structure} is a saturated pre-Loeb structure.
\end{definition}
Note that, for the present discussion, we are requiring the measure~$\mu^{\cM}$ in a pre-Loeb structure~$\cM$ to be \emph{positive}.

If $\cM$ is any pre-Loeb structure, the \emph{set underlying} a given $A\in{\AOM}$ is
\begin{equation*}
  [A] = \{x\in\Omega^{\cM} : \bin{x}{A} = 1\}.
\end{equation*}
We may (externally) identify~$A$ with~$[A]$ since
\begin{equation*}
  (\forall A)(\forall B)(A=B \leftrightarrow (\forall x)(\bin{x}{A} = \bin{x}{B}))
\end{equation*}
is a sentence in~$\ThL$.%
\footnote{Although Henson's languages have no conditional connective ``$\rightarrow$'', when $P$ is a discrete predicate (i.e., a term taking only the values $0,1$), a non-Henson formula such as $P(x)\rightarrow \varphi(x)$ can be semantically identified with the Henson formula $(P(x)\le 1/2)\vee \varphi(x)$.
(By contrast, the converse $\varphi(x)\rightarrow P(x)$ is not semantically equivalent to a Henson formula in general.)
  When both $P,Q$ are discrete, a biconditional $P(x)\leftrightarrow Q(x)$ can similarly be rewritten as a Henson formula.
The assertion ``\emph{$R$ is discrete}'' is captured by the Henson formula $(\forall x)(R(x)=0 \vee R(x)=1)$, where  ``$R(x)=r$'' is itself an abbreviation for ``$(R(x)\le r)\wedge(R(x)\ge r)$''.}

\begin{definition}[Loeb measure and Loeb-measurable sets]\label{def:mu-meas}
Let $\cM$ be a pre-Loeb structure with positive measure $\mu = \mu^{\cM}$.

A set $S\subseteq\Omega^{\cM}$ is \emph{$\AO$-measurable} (or just \emph{measurable}) if $S = [A]$ for some~$A\in{\AOM}$
(i.e., if ``$S\in{\AOM}$''---modulo the identification of~$S=[A]$ with $A$ itself).

A set $S\subseteq\Omega^{\cM}$ is \emph{$\mu$-measurable} (or \emph{Loeb-measurable (modulo~$\mu$)}) if for every $\epsilon>0$ there exist measurable~$A,B\in{\AOM}$ such that $[A]\subseteq S\subseteq [B]$
and $\mu(B-A)\le\epsilon$.

The \emph{Loeb measure} of a Loeb-measurable set~$S$ is
\begin{equation*}
  \muL(S)
  = \sup\{\mu(A) : A\in\AO^{\cM}, [A]\subseteq S\}
  = \inf\{\mu(B) : B\in\AO^{\cM}, [B]\supseteq S\}.
\end{equation*}

The \emph{Loeb algebra} of~$\cM$ is the collection~$\Amu$ of all Loeb-measurable subsets of~$\Omega^{\cM}$. 
\end{definition}

Note that $\Amu$ is an \emph{external} collection of subsets of~$\Omega^{\cM}$. 
It depends on~$\cM$ and has no intrinsic definition otherwise.
It is easy to check that $\Amu$ is an algebra of sets (i.e., closed under finite unions and intersections as well as complements). 
In fact, as soon as $\cM$ is at least $\omega$-saturated (i.e., types over a countable set of parameters are realized), $\Amu$ is a $\sigma$-algebra that is complete for $\muL$ in the sense that any subset of a $\muL$-null set is itself $\muL$-null (\cite{Duenez-Iovino:2017}, Proposition~3.4).
On the other hand, no degree of saturation ensures that~$\Amu$ is closed under unions of subfamilies of size~$\omega_1$ or more.

\subsection{Integration frameworks}
\label{sec:int-framewks}

We need to introduce the notion of \emph{(real) integration framework}, which generalizes integration structures as presented in section~\ref{sec:int-struct}. 
Roughly speaking, an integration framework is a saturated model of the theory of the operations of integration with respect to arbitrary finite (positive or signed) measures on a measure space.

Consider the reduct~$\tM$ of a \emph{classical} pre-integration structure~$\cM$, obtained by removing from~$\cM$ the distinguished measure~$\mu$ and all the functions involving~$\mu$ (including the integration operator~$I$).
Now expand $\tM$ to a structure $\cM'$ with a new Banach sort $\MO$ containing all finite (signed, real-valued) measures $\mu$ on~$\Omega$ plus the following distinguished functions and constants:
\begin{itemize}
\item \emph{Constants:} The zero measure $0\in\MO$;
\item \emph{Functions:}
  \begin{itemize}
  \item Vector space operations of addition and scalar multiplication on~$\MO$.
  \item Banach norm of total variation on~$\MO$:\\
 $\|\mu\| = \sup\{A\in\AO : |\mu(A)|+|\mu(A^{\complement})|\}$;
  \item The inclusion maps:
    \begin{itemize}
    \item $\Omega\hookrightarrow\AO : x\mapsto\{x\}$,
    \item $\Omega\hookrightarrow\MO : x\mapsto\delta_x$ (the unit point mass at~$x$);
    \end{itemize}
  \item The evaluation map $\MO\times\AO\to\RR : (\mu,A)\mapsto \mu(A)$ (which is 1-Lipschitz by definition of the norm $\Nrm{\cdot}$ on~$\MO$);
  \item The \emph{total variation} map $\LO\to\LO$: $\mu\mapsto|\mu|$ where $|\mu|$ is the (positive) measure of total variation of~$\mu$:
$|\mu|(A) = \sup\{|\mu(A\cap B)| + |\mu(A\cap B^{\complement})| : B\in\AO \}$;
  \item The integration operator $\pair{\cdot}{\cdot} : \LO\times\MO\to\RR : f\mapsto \pair{f}{\mu} = \int_{\Omega}f\,d\mu$.
  \end{itemize}
\end{itemize}
Given a language $\cL$ for pre-integration structures, let $\cL'$ be obtained from $\cL$ by removing the symbols $\boldsymbol{\mu}, I$ for the distinguished measure and integral operator, and adding a new sort symbol $\MO$ as well as new constant and function symbols per the list above. 

\begin{definition}[Real integration framework]\label{def:int-framewk}
  A \emph{classical (real) pre-integration framework} is any $\cL'$-structure $\cM'$ as described above.  
An \emph{(abstract) real pre-integration framework} is any model of the Henson theory~$\ThIntR$ of classical real pre-integration frameworks.%
\footnote{It is straightforward to verify that $\ThIntR$ is a uniform theory.}  

More generally, any structure $\cM$ in a language expanding~$\cL'$ such that the $\cL'$-reduct of~$\cM$ is a pre-integration framework in the above sense will be called a pre-integration framework.

A \emph{(real) integration framework} is a saturated real pre-integration framework.
\end{definition}

\subsection{Banach integration frameworks}
\label{sec:Banach-int-framewk}

There is no completely general notion of integration of functions taking values in an arbitrary Banach space~$\BB$---not even for bounded functions $F:\Omega\to\BB$ on a finite measure space~$(\Omega,\AO)$. 
However, it is very natural to require that any such notion of Banach integration should build upon the classical integral of real-valued functions. 
Our viewpoint is that any reasonable notion of Banach integration must expand a (pre-)integration framework to a \emph{Banach (pre-)integration framework} per Definition~\ref{def:Banach-int-fwk} below.

Consider expansions $\cM' = (\mathbf{S}', \mathbf{F}', \mathbf{C}')$ of real integration frameworks~$\cM = (\mathbf{S}, \mathbf{F}, \mathbf{C})$ where $\mathbf{S}'\supset \mathbf{S}$ contains two new sorts $\BB$ and $\LOB$, while $\mathbf{F}'\supset \mathbf{F}$ and $\mathbf{C}'\supset \mathbf{C}$ contain new functions and symbols as follows:
\begin{itemize}
\item Addition, scalar product and norm on~$\BB$ making it a real Banach space.
\item Addition, scalar product and norm on~$\LOB$ making it a Banach space.
\item The zero elements of~$\BB$ and $\LOB$.
\item An evaluation map $\LOB\times\Omega\to\BB : (F,x)\mapsto F(x)$ such that $\|F\| = \sup\{\|F(x)\| : x\in\Omega\}$.
(Thus, elements of $\LOB$ may be identified with functions $\Omega\to\BB$.)
\item The operation of multiplication $\LO\times\LOB\to\LOB : (f,F)\mapsto fF$ (such that $(fF)(x) = f(x)F(x)$ for all $x\in\Omega$) under which $\LOB$ is an $\LO$-module.
\item The inclusion map $\BB\to\LOB : T \mapsto T(\blacksquare)$ where $T(\blacksquare)\in\LOB$ is identified via evaluation with the constant function $\Omega\to\BB : x\mapsto T$.
\item The pointwise-norm map $\nrm{\cdot} : \LOB\to\LO$ satisfying $\nrm{F}(x) = \Nrm{F(x)}$ for all $F\in\LOB$, $x\in\Omega$.
\item An operation of Banach integration, namely a pairing $\Pair{\cdot}{\cdot} : \LOB\times\MO\to\BB$ satisfying the following properties:
\begin{enumerate}
\item $\llangle\cdot,\cdot\rrangle$ is bilinear;
\item $\llangle\cdot,\cdot\rrangle$ is compatible with the integration~$\langle\cdot,\cdot\rangle$ of real functions:
  \begin{enumerate}
  \item For all $T\in\BB$ and $f\in\LO$: $\llangle fT,\mu\rrangle = \langle f,\mu\rangle T$.
  \item For all $F\in\LOB$: $\|\llangle F,\mu\rrangle\| \le |\langle|F|,|\mu|\rangle|$.
  \end{enumerate}
\end{enumerate}
\end{itemize}

\begin{definition}[Banach integration framework]\label{def:Banach-int-fwk}
A language $\cL'$ expanding the language~$\cL$ of real pre-integration frameworks with the new sort symbols plus symbols for the functions and constants above is called a \emph{language for Banach integration frameworks.}

Let $\ThIntR$ be the Henson $\cL$-theory of real pre-integration frameworks, and let $\ThIntB$ extend $\ThIntR$ with further Henson $\cL'$-axioms capturing the properties of new sorts, functions and constants stated above (in semantically equivalent terms, let $\ThIntB$ be the Henson $\cL'$-theory of those expansions $\cM'$ of real pre-integration frameworks having the properties above).%
\footnote{The verification that $\ThIntB$ is a uniform theory is routine.}

A \emph{Banach pre-integration framework} is a model of~$\ThIntB$.
More generally, if $\tL$ is a language extending~$\cL'$ and $\cM$ is an $\tL$-structure whose reduct $\cM\restriction\cL'$ is a model of $\ThIntB$, we shall still call $\cM$ a Banach pre-integration framework.

A \emph{Banach integration framework} is a saturated model of~$\ThIntB$.
\end{definition}

\begin{remark}\label{rem:Banach-expansion}
The question whether a real pre-integration framework $\cM$ admits an expansion to a Banach pre-integration framework~$\cM'$ is very delicate. 
In general, the answer may be negative. 
However, when $\Omega^{\cM}$ is a finite set the answer is affirmative: 
It suffices to let $(\LOB)^{\cM'}$ be the set of all functions $F : \Omega^{\cM}\to\BB^{\cM}$, and also let $\Pair{F}{\mu} = \sum_{x\in\Omega^{\cM}}F(x)\mu(\{x\})$. 
The remaining ingredients of the expansion are defined in the obvious manner.
Similarly, an expansion $\cM'$ also exists if the Banach sort~$\BB^{\cM}$ has finite dimension (using a basis for~$\BB^{\cM}$, real-valued integration extends to $\BB^{\cM}$-valued integration in the straightforward classical fashion).
\end{remark}

\subsection[DCT in Banach integration frameworks]{A Dominated Convergence Theorem for nets of functions in Banach integration frameworks}
\label{sec:DCT}

In order to formulate a version of the Dominated Convergence Theorem~\ref{thm:DCT} for  integration frameworks below, we fix a directed set $(\DD,\preceq)$ so we can eventually discuss convergence of nets on it.%
\footnote{I.e., $\preceq$ is a nonstrict partial order on~$\DD$ such that any two $i,j\in\DD$ have an upper bound~$k$.} 
Classical sequences indexed by the directed set~$(\NN,\le)$ of natural numbers are of particular interest.
Only \emph{infinite} directed sets are useful as tools to define and study notions of convergence in analysis and topology;
on the other hand, critical results such as Theorem~\ref{thm:DCT} depend on the \emph{countability} of the directed set, so we may as well fix an infinite countable directed set~$(\DD,\preceq)$ for the remainder of the manuscript (this hypothesis will be made explicit whenever needed).

\begin{definition}\label{def:D-net}
  Fix a directed set~$(\DD\preceq)$.
  For $i\in\DD$, the \emph{final segment of~$\DD$ starting at~$i$} is $\DD_{\succeq i} = \{j\in\DD : j\succeq i\}$ (i.e., the set of elements equal to or greater than~$i$ in~$\DD$).
  A $\DD$-net $\ab$ in a metric space $(X,\dd)$ is any function $\DD\to X : i\mapsto a_i$.
  The \emph{spread of~$\ab$ from~$i$} is
  \begin{equation*}
   \spr_{\succeq i}(\ab) = \sup_{j,k\succeq i} \dd(a_j,a_k).
 \end{equation*}
  The \emph{oscillation} of~$\ab$ is
  \begin{equation*}
   \osc(\ab) = \inf_{i\in\DD}\spr_{\succeq i}(\ab).
 \end{equation*}
 The net~$\ab$ \emph{converges} if $\osc(\ab)=0$.
\end{definition}

\begin{theorem}[Dominated Convergence Theorem in Banach integration frameworks]
  \label{thm:DCT}
  Fix a \emph{countable} directed set~$\DD$.
Let~$\cM$ be any (saturated) Banach integration framework. 
Let $\fb$ be a bounded $\DD$-net in~$(\LOB)^{\cM}$.
For every $x\in\Omega^{\cM}$ and $\mu\in\MO^{\cM}$, let $\fb(x)$ denote the net~$(\varphi_j(x) : j\in\DD)$ and $\Pair{\fb}{\mu}$ the net $\big(\Pair{\varphi_j}{\mu} : j\in\DD)$ in~$\BB^{\cM}$.
Then we have
\begin{equation*}
  \osc(\Pair{\fb}{\mu})
  \le
  \Nrm{\mu}
  \sup_{x\in\Omega^{\cM}}\osc(\fb(x)).
\end{equation*}
In particular, if the net $\fb(x)$ is convergent for all $x\in\Omega^{\cM}$, then $\Pair{\fb}{\mu}$ is convergent.
\end{theorem}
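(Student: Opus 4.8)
The plan is to reduce the $\BB$-valued statement to the scalar Dominated Convergence Theorem already available in saturated integration frameworks (via Loeb integration), using the compatibility axiom (2)(b) of Banach integration to pass from $\BB$-valued integrals to real ones. First I would exploit the countability of $\DD$ to replace the net by a cofinal increasing sequence: enumerating $\DD$ and repeatedly taking upper bounds (possible by directedness) yields an increasing sequence $(i_n)$ that is cofinal in $\DD$. Since the spread $\spr_{\succeq i}$ is monotone decreasing in $i$, the oscillation computed along $(i_n)$ coincides with the infimum over all of $\DD$; this lets me treat every infimum over $\DD$ as the limit of a decreasing sequence.

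The core estimate comes from bilinearity of $\Pair{\cdot}{\cdot}$ together with axiom (2)(b): for $j,k\succeq i$,
\[
\Nrm{\Pair{\varphi_j}{\mu} - \Pair{\varphi_k}{\mu}}
= \Nrm{\Pair{\varphi_j - \varphi_k}{\mu}}
\le \pair{\nrm{\varphi_j - \varphi_k}}{\nrm{\mu}}
= \int_{\Omega^{\cM}} \Nrm{\varphi_j(x) - \varphi_k(x)}\,d\nrm{\mu}(x),
\]
where the outer absolute value of (2)(b) is dropped because $\nrm{\mu}$ is a positive measure and the integrand is nonnegative. I then define $g_i(x) = \sup_{j,k\succeq i}\Nrm{\varphi_j(x) - \varphi_k(x)}$. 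Because $\DD$ is countable, $g_i$ is a \emph{countable} supremum of elements of $(\LO)^{\cM}$ (each $\Nrm{\varphi_j-\varphi_k}$ is the pointwise-norm $\nrm{\varphi_j-\varphi_k}\in\LO$), uniformly bounded by $2\sup_j\Nrm{\varphi_j}<\infty$ since $\fb$ is bounded; in a saturated framework this is Loeb-measurable, hence Loeb-integrable. As $\Nrm{\varphi_j(x)-\varphi_k(x)}\le g_i(x)$ pointwise and the real Loeb integral against the positive measure $\nrm{\mu}$ is monotone, taking the supremum over $j,k\succeq i$ gives $\spr_{\succeq i}(\Pair{\fb}{\mu}) \le \int_{\Omega^{\cM}} g_i\,d\nrm{\mu}$.

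Next I pass to the limit. Along the cofinal sequence $(i_n)$ the functions $g_{i_n}$ decrease pointwise to $\inf_i g_i(\cdot) = \osc(\fb(\cdot))$ and are uniformly bounded, so the scalar Dominated Convergence Theorem in the saturated framework yields
\[
\inf_i \int_{\Omega^{\cM}} g_i\,d\nrm{\mu}
= \lim_{n} \int_{\Omega^{\cM}} g_{i_n}\,d\nrm{\mu}
= \int_{\Omega^{\cM}} \osc(\fb(\cdot))\,d\nrm{\mu}.
\]
Since $\osc(\fb(x)) \le \sup_{x\in\Omega^{\cM}}\osc(\fb(x))$ for every $x$ and $\nrm{\mu}$ is a positive measure of total mass $\nrm{\mu}(\Omega^{\cM}) = \Nrm{\mu}$, I conclude
\[
\osc(\Pair{\fb}{\mu})
= \inf_i \spr_{\succeq i}(\Pair{\fb}{\mu})
\le \int_{\Omega^{\cM}} \osc(\fb(\cdot))\,d\nrm{\mu}
\le \Nrm{\mu}\,\sup_{x\in\Omega^{\cM}}\osc(\fb(x)).
\]
The ``in particular'' clause then follows because $\BB^{\cM}$ is complete: if $\osc(\fb(x))=0$ for all $x$ the right-hand side vanishes, so $\Pair{\fb}{\mu}$ has zero oscillation, is therefore Cauchy along the cofinal sequence, and converges.

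The step that requires genuine care—and the main obstacle—is justifying that the external countable supremum $g_i$ is a legitimate Loeb-measurable, Loeb-integrable function and that the scalar DCT applies to the decreasing sequence $g_{i_n}$. This is precisely where \emph{saturation} and the Loeb-integration results recalled from the prior manuscript enter, and where the \emph{countability} of $\DD$ is indispensable: it is needed both to form $g_i$ as a countable supremum (so that Loeb-measurability is available) and to extract the cofinal sequence that converts the infima over $\DD$ into sequential limits amenable to the Dominated Convergence Theorem.
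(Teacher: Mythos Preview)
Your proposal is correct and follows essentially the same approach as the paper: reduce to the real-valued case via the compatibility axiom~(2)(b), exploit countability of~$\DD$ and saturation so that the pointwise spreads $g_i=\spr_{\succeq i}\fb$ are Loeb-measurable (the paper does this via its Lemmas on internal approximation of countable suprema/infima), and then pass to the limit. The only cosmetic difference is in the final step, where the paper uses an explicit Chebyshev-type splitting over $\{x:\spr_{\succeq i}\fb(x)\ge t\}$ and its complement rather than invoking the scalar Dominated Convergence Theorem as a black box; both arguments rest on the same Loeb-measurability machinery.
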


The proof of Theorem~\ref{thm:DCT} below is an adaptation of our earlier one for real-valued notions of integration (\cite{Duenez-Iovino:2017}, Proposition~5.3).

  Recall that a collection $\cF$ of subsets of a set~$U$ is a \emph{(proper) filter on~$S$} if (\emph{i})~$\emptyset\notin\cF$, (\emph{ii})~$\cF$ is closed under finite intersections, and (\emph{iii})~$\cF$ is upward closed: if $A\in\cF$ and $A\subseteq B\subseteq U$, then $B\in\cF$. 
  A proper filter $\cF$ is an \emph{ultrafilter} if $A\in\cF$ or $U\setminus A\in\cF$ for all $A\subseteq U$.

  For an introduction to ultrafilters, ultralimits and ultraproduct constructions in model theory, the reader is referred to Bell and Slomson's monograph~\cite{BellSlomson2006}.
  
\begin{definition}\label{def:full-filter}
  If $X$ is any nonempty set, let $\Pfin(X)$ be the family of finite nonempty subsets of~$X$.
  We call a filter $\cF$ on $\Pfin(X)$ \emph{greedy} if it contains all the sets $X_{\supseteq S} = \{T\in\Pfin(X): T\supseteq S\}$ for all $S\in\Pfin(X)$.
\end{definition}

Note that the collection $\{X_{\supseteq S} : S\in\Pfin(X)\}$ is a filter base on $\Pfin(S)$ since $X_{\supseteq S}\cap X_{\supseteq T} = X_{\supseteq S\cup T}$. 
(This means that the collection of subsets of $\Pfin(X)$ that are supersets of $X_{\supseteq S}$ for some $S\in\Pfin(X)$ is a filter on~$\Pfin(X)$.)%
\footnote{Recall that a \emph{filter base} on a set~$U$ is a collection $\mathcal{E}$ of subsets of~$U$ such that $\emptyset\notin \mathcal{E}$ and $\mathcal{E}$ is downward directed by inclusion in the sense that if $A,B\in \mathcal{E}$ then $A\cap B \supseteq C$ for some $C\in \mathcal{E}$.
The \emph{filter~$\cF$ with base $\mathcal{E}$} is the collection of all subsets of~$U$ that are supersets of some $A\in \mathcal{E}$.}
By a routine application of the axiom of choice, greedy ultrafilters on~$\Pfin(X)$ exist whenever $X$ is nonempty.
Observe that the principal ultrafilter generated by a fixed $S\in\Pfin(X)$ is greedy precisely when $S=X$; 
thus, if $X$ is infinite, greedy ultrafilters on $\Pfin(X)$ are nonprincipal.

\begin{lemma}\label{lem:ext-L-meas-approx}
  Let $f : \Omega^{\cM}\to\RR$ be bounded and (externally) $\muL$-measurable.
  Then there exists $\widetilde{f}\in(\LO)^{\cM}$ such that $f(x) = \widetilde{f}(x)$ for $\muL$-almost all~$x\in\Omega^{\cM}$ and $\inf_xf(x)\le \widetilde{f} \le \sup_xf(x)$.
\end{lemma}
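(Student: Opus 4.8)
The plan is to obtain $\widetilde f$ by saturation, as an internal $L^1$-limit of internal functions that approximate $f$ in measure. Write $m=\inf_x f(x)$, $M=\sup_x f(x)$ and $\mu=\mu^{\cM}$. The first step is a lifting principle for Loeb-measurable sets: for every $S\in\Amu$ there is an internal $A\in\AO^{\cM}$ with $\muL(S\triangle[A])=0$. By Definition~\ref{def:mu-meas} I choose, for each $k$, internal $A_k,B_k$ with $[A_k]\subseteq S\subseteq[B_k]$ and $\mu(B_k\setminus A_k)<1/k$; replacing $A_k$ by $A_1\cup\dots\cup A_k$ and $B_k$ by $B_1\cap\dots\cap B_k$ (internal Boolean operations) I may assume $(A_k)$ increasing and $(B_k)$ decreasing. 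Since $[A_k]\subseteq[B_l]$ forces $A_k\subseteq B_l$ internally, the countable family of conditions ``$A_k\subseteq A\subseteq B_k$ for all $k$'' on an internal variable $A$ is finitely satisfiable (any finite subset is witnessed by $A_N$ with $N$ the largest index), so saturation realizes it; the resulting $A$ satisfies $S\triangle[A]\subseteq[B_k]\setminus[A_k]$ for every $k$, whence $\muL(S\triangle[A])=0$.

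Using this, I would produce for each $n$ an internal $g_n\in\LO^{\cM}$ with $m\le g_n\le M$ and $|g_n-f|\le 1/n$ $\muL$-almost everywhere. Partitioning $[m,M]$ finely and taking level sets of $f$ gives a Loeb-measurable simple function $s_n$ with $m\le s_n\le M$ and $|s_n-f|\le 1/n$ everywhere; lifting each (disjointified) level set to an internal set by the previous paragraph, assigning the residual $\muL$-null set the value $m$, and forming the corresponding internal simple function yields such a $g_n$. Off the union of two exceptional null sets one has $|g_n-g_{n'}|\le 1/n+1/n'$; since any internal superset of a null set has arbitrarily small $\mu$-measure while the integrand is bounded by $M-m$, this upgrades to the \emph{internal} estimate $\int|g_n-g_{n'}|\,d\mu\le(1/n+1/n')\,\mu(\Omega)$.

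Finally I would assemble the $g_n$ into one internal function. Consider the partial type in a variable $u$ of sort $\LO$ consisting of the bound conditions $q'\le u\le q$ for all rationals $q'<m<M<q$, together with $\int|u-g_n|\,d\mu\le(2/n)\,\mu(\Omega)$ for every $n\in\NN$. This type is finitely satisfiable, since $g_N$ with $N$ the largest index present realizes any finite subset; by saturation it is realized by some $\widetilde f\in\LO^{\cM}$, which then satisfies $m\le\widetilde f\le M$. For each $\epsilon>0$ and large $n$, Chebyshev's inequality applied internally (legitimate via approximate measurability of level sets of internal functions) gives $\muL(\{|\widetilde f-g_n|>\epsilon\})\le 2\mu(\Omega)/(\epsilon n)$, while $\{|g_n-f|>\epsilon\}$ is $\muL$-null; letting $n\to\infty$ yields $\muL(\{|\widetilde f-f|>2\epsilon\})=0$ for every $\epsilon$, so $\widetilde f=f$ $\muL$-almost everywhere.

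The main obstacle is the passage from the externally Cauchy sequence $(g_n)$ to a genuine internal limit: the internal sort $\LO$ is not externally complete in the $L^1$-seminorm, so no internal limit exists a priori, and saturation is exactly what supplies one. The delicate point is to phrase the approximation as an internal, finitely satisfiable type whose realization is \emph{forced} to coincide with $f$ almost everywhere; this is why the conditions are imposed through the internal integral (which is insensitive to null sets) rather than pointwise, with almost-everywhere agreement recovered afterward by Chebyshev.
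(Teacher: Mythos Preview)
Your argument is correct and follows a route genuinely different from the paper's. The paper constructs, for each finite set $I$ of rationals in $[a,b]=[\inf f,\sup f]$, an internal lower approximant $f^I$ and an upper approximant $g^I$, built directly from inner $\AO$-approximations $A^r_n$, $B^r_n$ to the level sets $\{f\le r\}$, $\{f\ge r\}$; it then realizes, by saturation, the $\cU$-ultralimit (for a \emph{greedy} ultrafilter $\cU$ on $\Pfin(Q)$) of the types of $(f^I,g^I)$ to obtain $\widetilde f\le\widetilde g$ in $\LO$, and shows by a sandwich argument on sublevel and superlevel sets that $\widetilde f=f=\widetilde g$ $\muL$-a.e.

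Your approach instead factors through two separate saturation steps: first a set-level lifting principle (every $S\in\Amu$ agrees with some internal $[A]$ modulo a $\muL$-null set), and then an $L^1$-type over the parameters $\{g_n\}$ whose realization is~$\widetilde f$. The set-lifting step is essentially the classical Loeb lifting theorem and is a reusable standalone lemma; the passage through internal $L^1$-Cauchy estimates, with a.e.\ agreement recovered via Chebyshev, is closer in spirit to standard measure-theoretic practice. The paper's route avoids $L^1$ considerations entirely and works purely with the lattice structure of $\LO$, at the price of invoking the greedy-ultrafilter device; your route trades that device for an extra analytic layer. Both arguments ultimately rely on the identification of $\LO$ with a space of genuine functions on~$\Omega^{\cM}$ via evaluation (so that a pointwise inequality on~$\Omega^{\cM}$ yields a lattice inequality in~$\LO$), which the paper asserts explicitly and which is exactly what legitimizes your upgrade from ``$|g_n-g_{n'}|\le 1/n+1/n'$ off a null set'' to the internal integral bound.
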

\begin{proof}
  Let $f$ be a $\muL$-measurable bounded external function on~$\Omega^{\cM}$, and let $a = \inf_xf$, $b = \sup_xf$.
The assertion is trivial if $a=b$ or if $\Nrm{\mu}=0$---just take a constant $\widetilde{f}$ in $[a,b]$.
  Otherwise, we have $a<b$ and, replacing $\mu$ with $\nrm{\mu}$, we may assume $\mu$ to be a positive measure without loss of generality. 
  By definition of Loeb measurability, for rational~$r\in[a,b]$ and integer $n\ge 1$ there exist $A^r_n,B^r_n\in\AO^{\cM}$ such that $[A_n^r]\subseteq \{f\le r\}$, $[B_n^r]\subseteq \{f\ge r\}$, and $\muL\{f\le r\} - \mu(A_n^r) \le 1/n$, $\muL\{f\ge r\} - \mu(B_n^r) \le 1/n$.
  For fixed~$r$, the sequences $(A^r_n)$, $(B^r_n)$ may be constructed recursively to ensure $A_m^r\subseteq A^r_n$ and $B^r_m\subseteq B^r_n$ for $m\le n$.
We may also assume $A_m^r=1_{\AO}$ if $r\ge b$, and $B^s_m=1_{\AO}$ if $s\le a$.

Let $f^r_n = a\cdot(1-\chi_{B^r_n}) + r\cdot\chi_{B^r_n}$ and $g^r_n = b\cdot(1-\chi_{A^r_n}) + r\cdot\chi_{A^r_n}$.
  Let $Q$ be the set of rational numbers in~$[a,b]$.
The construction of $(A^r_n)$ and $(B^r_n)$ implies that $f^r_m \le f^r_n \le f \le g^r_n\le g^r_m$ for all $r\in Q$ and $m\le n$.
For $I\in\Pfin(Q)$ of cardinality~$n$, let $f^I = \max\{f^r_n:r\in I\}$, $g^I = \min\{g^r_n:r\in I\}$.
  Observe that $f^r_n\le f^I\le f^J\le f\le g^J\le g^I\le g^r_n$ if $I\subseteq J$, $r\in I$ and $\card(I)\ge n$.
  Since $a<b$ by assumption, $Q$ is infinite countable.
  Let $\cU$ be a greedy ultrafilter on $\Pfin(Q)$.
  By saturation, there are $\widetilde{f}, \widetilde{g}\in(\LO)^{\cM}$ realizing the $\cU$-ultralimit of the types $\tp_S(f^I,g^I)$ over the set of parameters $S = \{\mu,a,b\}\cup \{A^r_n,B^r_n\}_{r\in Q,n\in\NN^{*}}$.
  From the construction of~$\cU$ as a greedy ultrafilter, the definition of ultralimit, and the meaning of realization of a type, it is easy to verify that
\begin{equation*}
 a \le f^r_n \le \widetilde{f}\le \widetilde{g} \le g^r_n\le b \quad\text{for all $r\in Q$ and $n\ge 1$.}
\end{equation*}
It follows that for fixed $r\in Q$ we have $S^r := \bigcup_n [A^r_n] =  \bigcup_n\{g^r_n\le r\} \subseteq \{\widetilde{g}\le r\}$.
  On the other hand, by construction of $A^r_n$ we have $S^r \subseteq \{f\le r\}$ and $\muL(S^r) = \sup_n\mu(A^r_n) = \muL\{f\le r\}$.
Thus, $S^r\subseteq \{f\le r\}\cap\{\widetilde{g}\le r\}$ and $\muL(S^r) = \muL\{f\le r\}$;
hence, $\{f\le r\}$ is $\muL$-almost included in~$\{\widetilde{g}\le r\}$.
By a completely analogous argument, $\{\widetilde{f}\ge r\}$ \muL-almost includes $\{f\ge r\}$.
These almost-inclusions for every (rational) $r\in Q$ are easily shown to imply the \muL-a.e.\ inequalities $\widetilde{g}\le f\le \widetilde{f}$.
However, $\widetilde{f}\le \widetilde{g}$, so in fact $\widetilde{f} = f = \widetilde{g}$ (\muL-a.e.)
\end{proof}

\begin{lemma}\label{lem:sup-inf-internal}
  Fix $\mu\in\MO^{\cM}$ and let $\ab = (a_i:i<\omega)$ be a sequence of external $\muL$-measurable functions $\Omega^{\cM}\to\RR$.
Then there exist $\sigma,\iota\in(\LO)^{\cM}$ such that $\sigma(x) = \sup_{i<\omega}a_i(x)$ and $\iota(x) = \inf_{i<\omega}a_i(x)$ for $\muL$-almost all~$x\in\Omega^{\cM}$.

If the (external) sequence~$\ab$ consists of internal functions, i.e., it is a sequence in~$(\LO)^{\cM}$, then $\sigma,\iota$ may be chosen so $\sigma\ge\sup_{i<\omega} a_i$ and $\iota\le\inf_{i<\omega}a_i$.
\end{lemma}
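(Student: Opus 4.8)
The plan is to obtain the first assertion as a direct consequence of Lemma~\ref{lem:ext-L-meas-approx}, and then to upgrade it to the everywhere-domination in the second assertion by a single additional saturation argument that reuses the function produced in the first part. I will argue for $\sigma$ and the supremum; the case of $\iota$ and the infimum is symmetric, obtained by replacing $\max$ with $\min$ and reversing inequalities. Since $\sigma,\iota$ are required to lie in the bounded sort $(\LO)^{\cM}$, I assume the $a_i$ uniformly bounded; and, replacing $\mu$ by its total variation~$\nrm{\mu}$, I assume $\mu$ positive, so that ``$\muL$-almost everywhere'' is unambiguous.

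First I would note that $g := \sup_{i<\omega}a_i$ is again a bounded external $\muL$-measurable function: because $\cM$ is (at least $\omega$-)saturated, $\Amu$ is a $\sigma$-algebra (Definition~\ref{def:mu-meas} and~\cite{Duenez-Iovino:2017}), so a countable supremum of $\muL$-measurable functions is $\muL$-measurable. Lemma~\ref{lem:ext-L-meas-approx} then yields $\tau := \widetilde{g}\in(\LO)^{\cM}$ with $\tau = g$ for $\muL$-almost all $x\in\Omega^{\cM}$; taking $\sigma := \tau$ settles the first assertion.

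For the second assertion, suppose now that each $a_i\in(\LO)^{\cM}$, and for $I\in\Pfin(\omega)$ set $s_I := \max\{a_i : i\in I\}\in(\LO)^{\cM}$. These internal functions are monotone in~$I$ (so $s_I\le s_J$ whenever $I\subseteq J$), satisfy $s_I\le g$ everywhere, and have $\sup_I s_I = g$ pointwise. Retaining $\tau$ from the first part, I would realize by saturation an element $\sigma\in(\LO)^{\cM}$ satisfying the type in one free variable~$v$ of sort~$\LO$ given by the conditions
\begin{equation*}
  s_I \le v \quad (I\in\Pfin(\omega))
  \qquad\text{and}\qquad
  \pair{\nrm{v-\tau}}{\mu} \le 1/n \quad (n\ge 1).
\end{equation*}
Any realizer dominates every $s_I$, hence $\sigma\ge a_i$ for all~$i$ and therefore $\sigma\ge g = \sup_i a_i$ everywhere; while the second family of conditions forces the internal integral $\pair{\nrm{\sigma-\tau}}{\mu}$ to be infinitesimal, so that $\sigma=\tau=g$ holds $\muL$-almost everywhere, exactly as claimed.

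The one point requiring care---and the main obstacle---is finite satisfiability of this type. Given indices $I_1,\dots,I_k$ and a single~$n$, put $I=I_1\cup\dots\cup I_k$ and take the internal witness $v_0 := \max(\tau,s_I)\in(\LO)^{\cM}$. Then $v_0\ge s_I\ge s_{I_j}$ for each~$j$, so the order conditions hold. Moreover $v_0-\tau = \max(s_I-\tau,0)$ vanishes off $\{s_I>\tau\}$, which lies inside the $\muL$-null set $N := \{\tau\neq g\}$ since on its complement $\tau=g\ge s_I$. It then suffices to know that a bounded internal nonnegative function supported on a $\muL$-null set has infinitesimal integral: its Loeb integral is at most $\Nrm{v_0-\tau}\cdot\muL(N)=0$, and the Loeb integral of an internal function is the standard part of its internal integral~\cite{Duenez-Iovino:2017}, whence $\pair{\nrm{v_0-\tau}}{\mu}\approx 0\le 1/n$. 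This verifies finite satisfiability, and saturation then produces the desired~$\sigma$. (Equivalently, one may take $\sigma$ to be a $\cU$-ultralimit of the monotone net $(s_I)$ along a greedy ultrafilter $\cU$ on $\Pfin(\omega)$, in the same spirit as the proof of Lemma~\ref{lem:ext-L-meas-approx}.)
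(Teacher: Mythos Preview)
Your proof is correct and takes essentially the same approach as the paper. The first assertion is handled identically via Lemma~\ref{lem:ext-L-meas-approx}; for the second, the paper simply lets $\sigma$ realize the $\cU$-ultralimit of the types $\tp_S(a^k)$ (over $S=\{\mu\}\cup\{a_i\}_{i<\omega}$, with $a^k=\max_{i\le k}a_i$ and $\cU$ any nonprincipal ultrafilter on~$\omega$) and declares the remaining verification ``routine''---this is precisely the alternative you sketch parenthetically at the end, so your more explicit argument threading through~$\tau$ is a minor variant that merely makes the $\muL$-a.e.\ equality visible rather than leaving it implicit in the monotone convergence of $\pair{a^k}{\mu}$.
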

\begin{proof}
  It is routine to show that $f=\sup_{i<\omega}a_i$ and $g=\inf_{i<\omega}a_i$ are $\muL$-measurable, so the first assertion follows from Lemma~\ref{lem:ext-L-meas-approx}.
  
  When $\ab$ is a sequence of internal functions,  let $\cU$ be any nonprincipal ultrafilter on $\omega$ and let $\sigma$ realize the $\cU$-ultralimit of the types $\tp_S(a^k)$ over the set of parameters $S = \{\mu\}\cup \{a_i\}_{i<\omega}$, where $a^k = \max\{a_i:i\le k\}$.
  (Recall that $\LO$ is endowed with the binary lattice operation $\max\{a,b\}$, which trivially defines $n$-ary maximum operations for all $n\ge 1$.)
  The verification that $\sigma$ has the required properties is routine.
  The construction of $\iota$ is identical upon replacing ``max'' by ``min''.
\end{proof}

\begin{proof}[Proof of Theorem~\ref{thm:DCT}]
  The asserted inequality evidently holds if $\mu=0$.
  Otherwise, using a Jordan decomposition $\mu = \mu_+-\mu_-$ where $\mu_+ = (\nrm{\mu}+\mu)/2$ and $\mu_- = (\nrm{\mu}-\mu)/2$ are positive, the proof is easily reduced to the case in which $\mu$ is a probability measure, which we assume henceforth.

  Choose $C$ such that $\Nrm{\varphi_i}\le C$ for all $i$.
  For $j,k\in\DD$ let $\varphi^{j,k} = \nrm{\varphi_k-\varphi_j} \in (\LO)^{\cM}$.
  Since $\DD$ is countable, Lemma~\ref{lem:sup-inf-internal} implies that for each $i\in\DD$ there is $\sigma^i\in(\LO)^{\cM}$ with $\|\sigma^i\|\le 2C$ such that $\sigma^i$ is $\muL$-a.e.\ equal to $\spr_{\succeq i}\fb = \sup_{j,k\succeq i}\varphi^{j,k}$.
  Similarly, $\osc\fb = \inf_i \spr_{\succeq i}\fb$ is $\muL$-a.e.\ equal to $\inf_i\sigma^i$, hence to some $\omega\in(\LO)^{\cM}$ with $\Nrm{\omega}\le 2C$.

  Let $s = \sup_x\osc(\fb(x))$ and fix $t>s$.
  Since $\{x\colon \osc\fb(x)\ge t\}$ is empty (by choice of~$s$ and $t$) and $\omega(x)=\osc(\fb(x))$ for $\muL$-a.e.~$x$, the set $\{x\colon \omega(x)\ge t\}$ is \muL-null.
  Since $\omega = \inf_i\spr_{\succeq i}\fb$ (\muL-a.e.), we have $\inf_i\muL\{x\colon\spr_{\succeq i}\fb(x) \ge t\} = 0$.
  Thus, for arbitrary fixed $\epsilon>0$ we have $\muL\{x\colon\spr_{\succeq i}\fb(x) \ge t\} \le \epsilon$ for some~$i = i_\epsilon\in\DD$.
  (This depends crucially on the hypothesis that $\DD$ is countable.)
  It follows that for $j,k\succeq i$:
  \begin{equation*}
    \begin{split}
      \nrm{\Pair{\varphi_k-\varphi_j}{\mu}} &\le \Pair{\nrm{\varphi_k-\varphi_j}}{\mu}
      = \int \nrm{\varphi_k(x)-\varphi_j(x)}d\muL(x)\\
      &= \left(\int_{\{x\colon \spr_{\succeq i}\fb(x) < t\}} + \int_{\{x\colon \spr_{\succeq i}\fb(x) \ge t\}}\right) \nrm{\varphi_k(x)-\varphi_j(x)}d\muL(x)\\
      &\le t\cdot\muL\{x\colon \spr_{\succeq i}\fb(x) < t\}
      + 2C\epsilon \le t + 2C\epsilon.
  \end{split}
  \end{equation*}
  This proves that $\spr_{\succeq i}\Pair{\fb}{\mu} \le t + 2C\epsilon$.
  As $t>s$ and $\epsilon>0$ are arbitrary, $\osc\Pair{\fb}{\mu} \le s$.
\end{proof}

\subsection{A Uniform Metastability Principle for nets in Henson structures}
\label{sec:UMP}

\begin{proposition}[Uniform Metastability Principle (UMP)]\label{thm:UMP}
Fix a directed set~$(\DD,\preceq)$.  
Fix a Henson language~$\cL$ including constants $(a_j : j\in\DD)$ all of a common sort~$\BS$.
Let $\cT$ be a uniform~\cL-theory such that for every model $\cM$ of~$\cT$ the net $\ab^{\cM} = (a_j^{\cM} : j\in\DD)$ is convergent.
Then there exists a metastability rate $\Eb = \Eb^{\cT}$ depending only on~$\cT$ that applies uniformly to all sequences $\ab^{\cM}$ in all models $\cM$ of~$\cT$.
\end{proposition}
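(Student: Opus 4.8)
The plan is to fix $\epsilon>0$ and a function $\eta\in\prod_{i\in\DD}\Pfin(\DD_{\succeq i})$ and to produce a single finite set $E_{\epsilon,\eta}\in\Pfin(\DD)$ serving as a common metastability witness-set for \emph{all} models of~$\cT$; collecting these over all pairs $(\epsilon,\eta)$ then yields the rate $\Eb=\Eb^{\cT}$, which by construction depends only on~$\cT$. For $i\in\DD$ write $W_\eta(i)=\{i\}\cup\eta(i)\in\Pfin(\DD)$ for the finite window determined by $i$ and~$\eta$, and for a net~$\ab$ set $\spr_{W_\eta(i)}(\ab)=\max_{j,k\in W_\eta(i)}\dd(a_j,a_k)$. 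Since $W_\eta(i)\subseteq\DD_{\succeq i}$, we always have $\spr_{W_\eta(i)}(\ab)\le\spr_{\succeq i}(\ab)$; hence convergence of~$\ab^{\cM}$ (i.e.\ $\osc(\ab^{\cM})=0$) guarantees, for the given~$\epsilon$, the existence of \emph{some} index $i$ with $\spr_{W_\eta(i)}(\ab^{\cM})\le\epsilon$. The content of the proposition is that a \emph{single} finite family of candidate indices suffices across all models, and the bridge to this uniformity is the compactness of Henson's logic applied to the uniform theory~$\cT$.

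I would argue by contradiction. Suppose that for some fixed $\epsilon>0$ and~$\eta$ no finite $E\in\Pfin(\DD)$ is a common witness-set; that is, for every finite $E\subseteq\DD$ there is a model $\cM_E\models\cT$ with $\spr_{W_\eta(i)}(\ab^{\cM_E})>\epsilon$ for all $i\in E$. Because each $W_\eta(i)$ is finite and the language~$\cL$ contains the constants $(a_j:j\in\DD)$, the condition ``$\spr_{W_\eta(i)}(\ab)\ge\epsilon$'' is expressible (taking $\epsilon$ rational without loss of generality) by a single positive bounded $\cL$-formula~$\varphi_i$, namely the finite disjunction $\bigvee_{j,k\in W_\eta(i)}\dd(a_j,a_k)\ge\epsilon$. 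Form the set of conditions $\Gamma=\{\varphi_i:i\in\DD\}$. Given any finite $\Gamma_0\subseteq\Gamma$, indexed by a finite $E_0\subseteq\DD$, the model $\cM_{E_0}$ satisfies every $\varphi_i$ with $i\in E_0$, since $\spr>\epsilon$ entails $\spr\ge\epsilon$; thus $\cT\cup\Gamma_0$ is satisfiable, and therefore $\cT\cup\Gamma$ is finitely satisfiable.

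By the compactness theorem for uniform Henson theories, $\cT\cup\Gamma$ has a model~$\cM^{*}$. In~$\cM^{*}$ every index $i\in\DD$ satisfies $\spr_{W_\eta(i)}(\ab^{\cM^{*}})\ge\epsilon$, and since $\spr_{W_\eta(i)}\le\spr_{\succeq i}$ this forces $\spr_{\succeq i}(\ab^{\cM^{*}})\ge\epsilon$ for every~$i$, whence $\osc(\ab^{\cM^{*}})=\inf_{i\in\DD}\spr_{\succeq i}(\ab^{\cM^{*}})\ge\epsilon>0$. Thus $\ab^{\cM^{*}}$ does not converge, contradicting the hypothesis that $\ab^{\cM}$ converges for every model~$\cM$ of~$\cT$. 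This contradiction produces, for each $(\epsilon,\eta)$, a finite witness-set $E_{\epsilon,\eta}$; assembling these over all pairs yields~$\Eb^{\cT}$.

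The step I expect to be delicate is the passage through \emph{approximate} satisfaction: Henson's compactness theorem produces a model satisfying each $\varphi_i$ only approximately, so I would run the argument with two scales $\epsilon_1>\epsilon_2>0$, letting the conditions in~$\Gamma$ assert $\spr_{W_\eta(i)}\ge\epsilon_1$ while the failure of uniformity supplies models with $\spr>\epsilon_1$, and then absorb the approximation slack to conclude genuine spreads $\ge\epsilon_2$ in~$\cM^{*}$, still forcing $\osc(\ab^{\cM^{*}})\ge\epsilon_2>0$. The remaining points to verify carefully are that the finiteness of each value $\eta(i)$ is precisely what makes $\varphi_i$ a single positive bounded formula (so $\Gamma$ is a legitimate set of conditions), and that the uniformity of~$\cT$ is exactly what licenses the compactness theorem and keeps the constants $a_j$ within common bounds across all models.
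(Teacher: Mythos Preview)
Your proof is correct and follows essentially the same route as the paper's: argue by contradiction, use the failure of a uniform witness-set to build a finitely satisfiable family of positive bounded conditions asserting large spread over each window~$\eta_i$, and invoke compactness of Henson's logic to obtain a model with $\osc(\ab)\ge\epsilon$, contradicting the convergence hypothesis. Your treatment is in fact more careful than the paper's on the approximate-satisfaction issue (the two-scale $\epsilon_1>\epsilon_2$ trick), which the paper's proof elides.
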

\begin{proof}
(\cite{Duenez-Iovino:2017}, Proposition~2.4.)
  Assume no such rate of metastability exists.
  Then there exist $\epsilon>0$ and a sampling $\eta \in \prod_{i\in\DD}\Pfin(\DD_{\succeq i})$ such that for every $S\in\Pfin(\DD)$ there is a model $\cM = \cM^S_{\epsilon,\eta}$ of $\cT$ such that $\ab = \ab^{\cM}$ satisfies $\epsilon\le \spr_{\eta_i}(\ab) = \max\{\dd(a_j,a_k) : j,k\in\eta_i\}$ for all $i\in S$.
  By the compactness theorem for Henson logic, there is a model $\cM$ of $\cT$ such that $\ab = \ab^{\cM}$ satisfies $\spr_{\eta_i}(\ab) \ge \epsilon$ for all $i\in\DD$, and hence $\osc(\ab)\ge\epsilon$, contradicting the hypothesis that $\ab^{\cM}$ converges.
\end{proof}

\def\cprime{$'$}
\providecommand{\bysame}{\leavevmode\hbox to3em{\hrulefill}\thinspace}
\providecommand{\MR}{\relax\ifhmode\unskip\space\fi MR }
\providecommand{\MRhref}[2]{%
  \href{http://www.ams.org/mathscinet-getitem?mr=#1}{#2}
}
\providecommand{\href}[2]{#2}


\begin{thebibliography}{Aus15b}

\bibitem[Aus15a]{Austin:2015a}
Tim Austin, \emph{Pleasant extensions retaining algebraic structure, {I}}, J.
  Anal. Math. \textbf{125} (2015), 1--36. \MR{3317896}

\bibitem[Aus15b]{Austin:2015b}
\bysame, \emph{Pleasant extensions retaining algebraic structure, {II}}, J.
  Anal. Math. \textbf{126} (2015), 1--111. \MR{3358029}

\bibitem[Aus16]{Austin:2016}
\bysame, \emph{Non-conventional ergodic averages for several commuting actions
  of an amenable group}, J. Anal. Math. \textbf{130} (2016), 243--274.
  \MR{3574655}

\bibitem[Ber87]{Bergelson:1987}
V.~Bergelson, \emph{Weakly mixing {PET}}, Ergodic Theory Dynam. Systems
  \textbf{7} (1987), no.~3, 337--349. \MR{912373 (89g:28022)}

\bibitem[Bir31]{Birkhoff1931}
G.~D. Birkhoff, \emph{Proof of the ergodic theorem}, Proc. Nat. Acad. Sci. U.
  S. A. \textbf{17} (1931), 656--660.

\bibitem[BL04]{BergelsonLeibman2004}
V.~Bergelson and A.~Leibman, \emph{Failure of the {R}oth theorem for solvable
  groups of exponential growth}, Ergodic Theory and Dynamical Systems
  \textbf{24} (2004), no.~1, 45--53.

\bibitem[BS06]{BellSlomson2006}
John~Lane Bell and Alan~B. Slomson, \emph{Models and ultraproducts: An
  introduction}, Dover Books on Mathematics, Dover Publications, 2006.

\bibitem[DnI17]{Duenez-Iovino:2017}
Eduardo Due\~nez and Jos\'e Iovino, \emph{Model theory and metric convergence
  {I}: {M}etastability and dominated convergence}, Beyond first order model
  theory, CRC Press, Boca Raton, FL, 2017, pp.~131--187. \MR{3729326}

\bibitem[Fur77]{Furstenberg:1977}
Harry Furstenberg, \emph{Ergodic behavior of diagonal measures and a theorem of
  {S}zemer\'edi on arithmetic progressions}, J. Analyse Math. \textbf{31}
  (1977), 204--256. \MR{0498471 (58 \#16583)}

\bibitem[HI02]{Henson-Iovino:2002}
C.~Ward Henson and Jos{\'e} Iovino, \emph{Ultraproducts in analysis}, Analysis
  and logic (Mons, 1997), London Math. Soc. Lecture Note Ser., vol. 262,
  Cambridge Univ. Press, Cambridge, 2002, pp.~1--110. \MR{1 967 834}

\bibitem[HK05]{Host-Kra:2005}
Bernard Host and Bryna Kra, \emph{Nonconventional ergodic averages and
  nilmanifolds}, Ann. of Math. (2) \textbf{161} (2005), no.~1, 397--488.
  \MR{2150389 (2007b:37004)}

\bibitem[Iov14]{Iovino:2014}
Jos{\'e} Iovino, \emph{Applications of model theory to functional analysis},
  Dover Publications, Inc., Mineola, NY, 2014, Revised reprint of the 2002
  original, With a new preface, notes, and an updated bibliography.
  \MR{3362124}

\bibitem[Lei98]{Leibman:1998}
A.~Leibman, \emph{Polynomial sequences in groups}, J. Algebra \textbf{201}
  (1998), no.~1, 189--206. \MR{1608723 (99c:20044)}

\bibitem[Lei02]{Leibman:2002}
\bysame, \emph{Polynomial mappings of groups}, Israel J. Math. \textbf{129}
  (2002), 29--60. \MR{1910931 (2003g:20060)}

\bibitem[Lei05]{Leibman:2005}
\bysame, \emph{Convergence of multiple ergodic averages along polynomials of
  several variables}, Israel J. Math. \textbf{146} (2005), 303--315.
  \MR{2151605}

\bibitem[Sze75]{Szemeredi1975}
E.~Szemer{\'e}di, \emph{On sets of integers containing no {$k$} elements in
  arithmetic progression}, Acta Arith. \textbf{27} (1975), 199--245, Collection
  of articles in memory of Juri{\u\i} Vladimirovi{\v{c}} Linnik. \MR{0369312
  (51 \#5547)}

\bibitem[Tao08]{Tao:2008}
Terence Tao, \emph{Norm convergence of multiple ergodic averages for commuting
  transformations}, Ergodic Theory Dynam. Systems \textbf{28} (2008), no.~2,
  657--688. \MR{2408398 (2009k:37012)}

\bibitem[Tao12]{Tao2012}
\bysame, \emph{Walsh's ergodic theorem, metastability, and external {C}auchy
  convergence}, \url{http://terrytao.wordpress.com/2012/10/25/}, October 2012.

\bibitem[vN32]{Neumann1932}
John von Neumann, \emph{Proof of the quasi-ergodic hypothesis}, Proc. Nat.
  Acad. Sci. U. S. A. \textbf{18} (1932), no.~1, 70--82.

\bibitem[Wal12]{Walsh:2012}
Miguel~N. Walsh, \emph{Norm convergence of nilpotent ergodic averages}, Ann. of
  Math. (2) \textbf{175} (2012), no.~3, 1667--1688. \MR{2912715}

\bibitem[ZK16]{ZorinKr:2016}
Pavel Zorin-Kranich, \emph{Norm convergence of multiple ergodic averages on
  amenable groups}, J. Anal. Math. \textbf{130} (2016), 219--241. \MR{3574654}

\end{thebibliography}
\end{document}